\theoremstyle{plain}
\newtheorem{defn}{Definition}
\newtheorem{theorem}{Theorem}
\newtheorem{corollary}{Corollary}
\newtheorem{remark}{Remark}
\newtheorem{lemma}{Lemma}
\newcommand{\eps}{\varepsilon}
\newcommand{\me}{\mathbf}
\newcommand{\mr}{\mathbb}
\newcommand{\mt}{\mathsf}
\newcommand{\md}{\mathcal}
\newcommand{\ld}{\left}
\newcommand{\rd}{\right}
\newcommand{\ip}{\int_{-\pi}^{\pi}}
\newcommand{\be}{\begin{equation}}
\newcommand{\ee}{\end{equation}}
\newcommand{\bem}{\begin{multline}}
\newcommand{\eem}{\end{multline}}
\newcommand{\bml}{\begin{multline*}}
\newcommand{\eml}{\end{multline*}}
\newcommand{\beg}{\begin{gather}}
\newcommand{\eeg}{\end{gather}}
\begin{document}

\title{Filtering problem for sequences with periodically stationary multiseasonal  increments with spectral densities allowing canonical factorizations}

\author{
Maksym Luz\thanks {BNP Paribas Cardif in Ukraine, Kyiv, Ukraine, maksym.luz@gmail.com},
Mikhail Moklyachuk\thanks
{Department of Probability Theory, Statistics and Actuarial
Mathematics, Taras Shevchenko National University of Kyiv, Kyiv 01601, Ukraine, moklyachuk@gmail.com}
}

\date{\today}

\maketitle

\renewcommand{\abstractname}{Abstract}
\begin{abstract}
We consider a stochastic sequence $\xi(m)$  with periodically stationary generalized multiple increments of fractional order which combines cyclostationary, multi-seasonal, integrated and fractionally integrated patterns. The filtering problem is solved for this type of sequences  based on observations  with a periodically stationary noise.
When spectral densities are known and allow the canonical factorizations, we derive  the mean square error and the spectral characteristics of the optimal estimate of the functional $A{\xi}=\sum_{k=0}^{\infty}{a}(k) {\xi}(-k)$.
Formulas that determine the least favourable spectral densities and the minimax (robust) spectral
characteristics of the optimal linear estimate of the functional
are proposed in the case where the spectral densities
are not   known, but some sets of admissible spectral densities are given.
\end{abstract}

\maketitle

\textbf{Keywords}:{Periodically Stationary Sequence, SARFIMA, Fractional Integration, Filtering, Optimal Linear
Estimate, Mean Square Error, Least Favourable Spectral Density
Matrix, Minimax Spectral Characteristics}

\maketitle

\vspace{2ex}
\textbf{\bf AMS 2010 subject classifications.} Primary: 60G10, 60G25, 60G35, Secondary: 62M20, 62P20, 93E10, 93E11

\section*{Introduction}
Non-stationary  time series models have found wide-ranging applications in  economics, finance, climatology, air pollution,   signal processing. A fundamental example is the general multiplicative model, known as $SARIMA (p, d, q)\times(P, D, Q)_s$, which was introduced in the  book by Box and Jenkins et al. \cite{Box_Jenkins}. This model incorporates both integrated and seasonal factors, and can be represented as follows:
\be
 \Psi (B ^ s) \psi (B) (1-B) ^ d (1-B ^ s) ^ Dx_t = \Theta (B ^ s) \theta (B) \eps_t, \label {seasonal_3_model} \ee
where ${\eps_t}$ is a sequence of independent and identically distributed (i.i.d.) random variables, and where $ \Psi (z) $ and $ \Theta (z) $ are two polynomials of degrees $ P $ and $ Q $, respectively, with roots outside the unit circle. The parameters $d$ and $D$ can take fractional values. The process in equation \eqref{seasonal_3_model} is stationary and invertible when $|d+D|<1/2$ and $|D|<1/2$.
One application of seasonal ARFIMA models to the analysis of monetary aggregates used by the U.S. Federal Reserve is demonstrated in the work of Porter-Hudak \cite{Porter-Hudak}.

In the field of statistical inference for seasonal long-memory sequences, recent research has yielded several notable results. One such contribution is the work by Tsai, Rachinger, and Lin  \cite{Tsai}, who developed methods for estimating model parameters when measurement errors are present.
In another study, Baillie, Kongcharoen, and Kapetanios  \cite{Baillie} compared two commonly used estimation procedures for prediction problem based on  ARFIMA models. Specifically, they compared the performance of maximum likelihood estimation (MLE) to  a two-step local Whittle estimator. Through a   simulation study, they found that the MLE estimator   outperformed the two-step local Whittle estimator.
In addition, Hassler and Pohle  \cite{Hassler}  evaluated the predictive performance of various forecasting methods for inflation and return volatility time series. Their analysis provided compelling evidence in support of models with a fractional integration component.

Another class of non-stationary processes is the periodically correlated or cyclostationary processes, introduced by Gladyshev \cite{Gladyshev}. These processes belong to the class of time-dependent spectrum processes and are widely used in signal processing and communications. For recent works on cyclostationarity and its applications, see the review by Napolitano \cite{Napolitano}. Periodic time series can be viewed as an extension of seasonal models \cite{Baek,Basawa,Lund,Osborn}.

The methods used for parameter estimation and filtering of time series data often fail to account for real-world challenges such as outliers, measurement errors, incomplete information about spectral  structure. As a result, there is a growing interest in robust estimation methods that can effectively handle such issues. For example, Reisen et al. \cite{Reisen2018} and Solci et al. \cite{Solci} have proposed robust estimates for SARIMA and PAR models. Other researchers, including Grenander \cite{Grenander}, Hosoya \cite{Hosoya},  Franke \cite{Franke1985}, Vastola and Poor \cite{VastPoor1984}, Moklyachuk \cite{Moklyachuk,Moklyachuk2015}, and Luz and Moklyachuk \cite{Luz_Mokl_filt3}, Liu et al. \cite{Liu}, have also investigated various aspects of minimax extrapolation, interpolation, and filtering problems for stationary sequences and processes.

In this article, we extend our investigation of robust filtering for stochastic sequences with periodically stationary long memory multiple seasonal increments (or sequences with periodically stationary general multiplicative (GM) increments) by focusing on spectral densities that allow canonical factorizations, whereas in \cite{Luz_Mokl_filt3}, the results were obtained using Fourier transformations of the spectral densities.

The  mentioned sequences were introduced by Luz and Moklyachuk in our earlier work \cite{Luz_Mokl_extra_GMI}, motivated by an increasing interest in models with multiple seasonal and periodic patterns (see the works of Dudek \cite{Dudek}, Gould et al. \cite{Gould},  and Hurd and Piparas \cite{Hurd}). This research continues previous works on minimax filtering of stationary vector-valued processes, periodically correlated processes, and processes with stationary increments. Specifically, Moklyachuk and Masyutka \cite{Mokl_Mas_filt}, Moklyachuk and Golichenko (Dubovetska) \cite{Dubovetska_filt}, and Luz and Moklyachuk \cite{Luz_Mokl_book} have performed research in these areas. Additionally, we mention the works by Moklyachuk, Masyutka, and Sidei \cite{Sidei_book}, which derive minimax estimates of stationary processes from observations with missing values.

The article is structured as follows. In Section $\ref{spectral_ theory}$, we provide a brief review of the   GM increment sequence $\chi_{\overline{\mu},\overline{s}}^{(d)}(\vec{\xi}(m))$ and the stochastic sequence $\xi(m)$ with periodically stationary (periodically correlated, cyclostationary) GM increments, as well as the spectral theory of vector-valued GM increment sequences.
In Section $\ref{classical_filtering}$, we address the classical filtering problem for linear functionals $A\xi$ and $A_N\xi $ that are constructed from unobserved values of the sequence $\xi(m)$. We assume that the spectral densities of the sequence $\xi(m)$ and a noise sequence $\eta(m)$ are known and allow canonical factorization.    The estimates are derived in terms of coefficients of canonical factorizations of the spectral densities, making use of results obtained in \cite{Luz_Mokl_filt_GMI} by using the Fourier transformations  of the spectral densities.
Section $\ref{minimax_filtering}$ focuses on the minimax (robust) estimation for cases where the spectral densities of sequences are not precisely known, but some sets of admissible spectral densities are specified. For illustration, We propose particular types of admissible spectral density sets, which are   generalizations of the sets described in a survey article by Kassam and Poor \cite{Kassam_Poor} for stationary stochastic processes.

\section{Stochastic sequences with periodically stationary generalized multiple increments}\label{spectral_ theory}

\subsection{Preliminary notations and definitions}
 Consider a   stochastic sequence $\xi(m)$, $m\in\mathbb Z$, and a backward shift operator $B_{\mu}$   with the step $\mu\in
\mathbb Z$, such that $B_{\mu}\xi(m)=\xi(m-\mu)$; $B:=B_1$. Then $B_{\mu}^s=B_{\mu}B_{\mu}\cdot\ldots\cdot B_{\mu}$. Define a \emph{multiplicative incremental operator}
\[
\chi_{\overline{\mu},\overline{s}}^{(d)}(B)
=\prod_{i=1}^r(1-B_{\mu_i}^{s_i})^{d_i}
=\sum_{k=0}^{n(\gamma)}e_{\gamma}(k)B^k,
\]
where
$d:=d_1+d_2+\ldots+d_r$, $\overline{d}=(d_1,d_2,\ldots,d_r)\in (\mr N^*)^r$,
 $\overline{s}=(s_1,s_2,\ldots,s_r)\in (\mr N^*)^r$
and $\overline{\mu}=(\mu_1,\mu_2,\ldots,\mu_r)\in (\mr N^*)^r$ or $\in (\mr Z\setminus\mr N)^r$; $n(\gamma):=\sum_{i=1}^r\mu_is_id_i$. Here $\mr N^*=\mr N\setminus\{0\}$.
The explicit representation of the coefficients $e_{\gamma}(k)$ is given in \cite{Luz_Mokl_extra_GMI}.
Within the article, $\delta_{lp}$ denotes Kronecker symbols, ${n \choose l}=\frac{n!}{l!(n-l)!}$.

\begin{defn}[\cite{Luz_Mokl_extra_GMI}]\label{def_multiplicative_Pryrist}
For a stochastic sequence $\xi(m)$, $m\in\mathbb Z$, the
sequence
\begin{eqnarray}
\nonumber
\chi_{\overline{\mu},\overline{s}}^{(d)}(\xi(m))&:=&\chi_{\overline{\mu},\overline{s}}^{(d)}(B)\xi(m)
=(1-B_{\mu_1}^{s_1})^{d_1}(1-B_{\mu_2}^{s_2})^{d_2}\cdot\ldots\cdot(1-B_{\mu_r}^{s_r})^{d_r}\xi(m)
\\&=&\sum_{l_1=0}^{d_1}\ldots \sum_{l_r=0}^{d_r}(-1)^{l_1+\ldots+ l_r}{d_1 \choose l_1}\cdot\ldots\cdot{d_r \choose l_r}\xi(m-\mu_1s_1l_1-\cdots-\mu_rs_rl_r)
\label{GM_Pryrist}
\end{eqnarray}
is called \emph{a stochastic  generalized multiple (GM)  increment sequence} of differentiation   order
$d$
with a fixed seasonal  vector $\overline{s}\in (\mr N^*)^r$
and a varying step $\overline{\mu}\in (\mr N^*)^r$ or $\in (\mr Z\setminus\mr N)^r$.
\end{defn}

\begin{defn}[\cite{Luz_Mokl_extra_GMI}]
\label{oznStPryrostu}
A stochastic GM increment sequence $\chi_{\overline{\mu},\overline{s}}^{(d)}(\xi(m))$  is called   a wide sense
stationary if the mathematical expectations
\begin{eqnarray*}
\mt E\chi_{\overline{\mu},\overline{s}}^{(d)}(\xi(m_0))& = &c^{(d)}_{\overline{s}}(\overline{\mu}),
\\
\mt E\chi_{\overline{\mu}_1,\overline{s}}^{(d)}(\xi(m_0+m))\chi_{\overline{\mu}_2,\overline{s}}^{(d)}(\xi(m_0))
& = & D^{(d)}_{\overline{s}}(m;\overline{\mu}_1,\overline{\mu}_2)
\end{eqnarray*}
exist for all $m_0,m,\overline{\mu},\overline{\mu}_1,\overline{\mu}_2$ and do not depend on $m_0$.
The function $c^{(d)}_{\overline{s}}(\overline{\mu})$ is called a mean value  and the function $D^{(d)}_{\overline{s}}(m;\overline{\mu}_1,\overline{\mu}_2)$ is
called a structural function of the stationary GM increment sequence (of a stochastic sequence with stationary GM increments).
\\
The stochastic sequence $\xi(m)$, $m\in\mathbb   Z$
determining the stationary GM increment sequence
$\chi_{\overline{\mu},\overline{s}}^{(d)}(\xi(m))$ by   \eqref{GM_Pryrist} is called a stochastic
sequence with stationary GM increments (or GM increment sequence of order $d$).
\end{defn}

\begin{remark}
Spectral properties of one-pattern increment sequence $\chi_{\mu,1}^{(n)}(\xi(m)):=\xi^{(n)}(m,\mu)=(1-B_{\overline{\mu}})^n\xi(m)$ and the continuous time increment process $\xi^{(n)}(t,\tau)=(1-B_{\tau})^n\xi(t)$ are described in \cite{Yaglom:1955}, \cite{Yaglom}.
\end{remark}

\subsection{Definition and spectral representation of stochastic sequences with periodically stationary GM increment}

In this subsection, we present definition, justification and a brief review of the spectral theory of stochastic sequences with periodically stationary multiple seasonal increments, introduced in \cite{Luz_Mokl_extra_GMI}.

\begin{defn}
\label{OznPeriodProc}
A stochastic sequence $\xi(m)$, $m\in\mathbb Z$ is called a \emph{stochastic
sequence with periodically stationary (periodically correlated) GM increments} with period $T$ if the mathematical expectations
\begin{eqnarray*}
\mt E\chi_{\overline{\mu},T\overline{s}}^{(d)}(\xi(m+T)) & = & \mt E\chi_{\overline{\mu},T\overline{s}}^{(d)}(\xi(m))=c^{(d)}_{T\overline{s}}(m,\overline{\mu}),
\\
\mt E\chi_{\overline{\mu}_1,T\overline{s}}^{(d)}(\xi(m+T))\chi_{\overline{\mu}_2,T\overline{s}}^{(d)}(\xi(k+T))
& = & D^{(d)}_{T\overline{s}}(m+T,k+T;\overline{\mu}_1,\overline{\mu}_2)
= D^{(d)}_{T\overline{s}}(m,k;\overline{\mu}_1,\overline{\mu}_2)
\end{eqnarray*}
exist for every  $m,k,\overline{\mu}_1,\overline{\mu}_2$ and  $T>0$ is the least integer for which these equalities hold.
\end{defn}

It follows from  Definition \ref{OznPeriodProc} that the sequence
\begin{equation}
\label{PerehidXi}
\xi_{p}(m)=\xi(mT+p-1), \quad p=1,2,\dots,T; \quad m\in\mathbb Z
\end{equation}
forms a vector-valued sequence
$\vec{\xi}(m)=\left\{\xi_{p}(m)\right\}_{p=1,2,\dots,T}, m\in\mathbb Z$
with stationary GM increments as follows:
\[
\chi_{\overline{\mu},\overline{s}}^{(d)}(\xi_p(m))=\chi_{\overline{\mu},T\overline{s}}^{(d)}(\xi(mT+p-1)),\quad p=1,2,\dots,T,
\]
where $\chi_{\overline{\mu},\overline{s}}^{(d)}(\xi_p(m))$ is the GM increment of the $p$-th component of the vector-valued sequence $\vec{\xi}(m)$.

The following theorem describes the spectral structure of the vector-valued GM increment  \cite{Karhunen}, \cite{Luz_Mokl_extra_GMI}.

\begin{theorem}\label{thm1}
1. The mean value and the structural function
 of the vector-valued stochastic stationary
GM increment sequence $\chi_{\overline{\mu},\overline{s}}^{(d)}(\vec{\xi}(m))$ can be represented in the form
\begin{eqnarray}
\label{serFnaR_vec}
c^{(d)}_{ \overline{s}}(\overline{\mu})& = &c\prod_{i=1}^r\mu_i^{d_i},
\\
\label{strFnaR_vec}
 D^{(d)}_{\overline{s}}(m;\overline{\mu}_1,\overline{\mu}_2)& = &\int_{-\pi}^{\pi}e^{i\lambda
m} \chi_{\overline{\mu}_1}^{(d)}(e^{-i\lambda})\chi_{\overline{\mu}_2}^{(d)}(e^{i\lambda})\frac{1}
{|\beta^{(d)}(i\lambda)|^2}dF(\lambda),
\end{eqnarray}
where
\[\chi_{\overline{\mu}}^{(d)}(e^{-i\lambda})=\prod_{j=1}^r(1-e^{-i\lambda\mu_js_j})^{d_j}, \quad \beta^{(d)}(i\lambda)= \prod_{j=1}^r\prod_{k_j=-[s_j/2]}^{[s_j/2]}(i\lambda-2\pi i k_j/s_j)^{d_j},
\]
 $c$ is a vector, $F(\lambda)$ is the matrix-valued spectral function of the stationary stochastic sequence $\chi_{\overline{\mu},\overline{s}}^{(d)}(\vec{\xi}(m))$. The vector $c$
and the matrix-valued function $F(\lambda)$ are determined uniquely by the GM
increment sequence $ \chi_{\overline{\mu},\overline{s}}^{(d)}(\vec \xi(m))$.

2. The stationary vector-valued GM increment sequence $\chi_{\overline{\mu},\overline{s}}^{(d)}(\vec{\xi}(m))$ admits the spectral representation
\begin{equation}
\label{SpectrPred_vec}
\chi_{\overline{\mu},\overline{s}}^{(d)}(\vec{\xi}(m))
=\int_{-\pi}^{\pi}e^{im\lambda}\chi_{\overline{\mu}}^{(d)}(e^{-i\lambda})\frac{1}{\beta^{(d)}(i\lambda)}d\vec{Z}_{\xi^{(d)}}(\lambda),
\end{equation}
where $\vec{Z}_{\xi^{(d)}}(\lambda)=\{Z_{ p}(\lambda)\}_{p=1}^{T}$ is a (vector-valued) stochastic process with uncorrelated increments on $[-\pi,\pi)$ connected with the spectral function $F(\lambda)$ by
the relation
\[
 \mt E(Z_{p}(\lambda_2)-Z_{p}(\lambda_1))(\overline{ Z_{q}(\lambda_2)-Z_{q}(\lambda_1)})
 =F_{pq}(\lambda_2)-F_{pq}(\lambda_1),\]
 \[  -\pi\leq \lambda_1<\lambda_2<\pi,\quad p,q=1,2,\dots,T.
\]
\end{theorem}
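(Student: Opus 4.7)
The plan is to prove the two parts in sequence, following the classical Yaglom--Karhunen scheme for processes with stationary increments, adapted here to the multiplicative multi-seasonal vector-valued setting. Part 1 will establish the explicit form of the mean and the spectral representation of the structural function; Part 2 will then promote this to a spectral representation of the sequence itself via the Karhunen isometry cited in the excerpt.

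For the mean $c^{(d)}_{\overline s}(\overline\mu)$, I would use that $\chi_{\overline\mu,\overline s}^{(d)}(B)$ factors as a product of commuting one-pattern operators $(1-B_{\mu_i}^{s_i})^{d_i}$. Setting $\vec m(m)=\mt E\vec\xi(m)$, stationarity of the GM increment forces $\chi_{\overline\mu,\overline s}^{(d)}(B)\vec m(m)$ to be independent of $m$; solving this system of linear difference equations recursively in $i=1,\dots,r$ shows that modulo functions in the kernel of $\chi_{\overline\mu,\overline s}^{(d)}(B)$, the mean $\vec m(m)$ must be of polynomial type and applying the operator produces the factor $\prod_i(\mu_i s_i)^{d_i}$. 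Absorbing the constants $s_i^{d_i}$ into a vector $c$ gives $c^{(d)}_{\overline s}(\overline\mu)=c\prod_i\mu_i^{d_i}$.

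For the structural function, for each fixed $\overline\mu$ the sequence $\chi_{\overline\mu,\overline s}^{(d)}(\vec\xi(m))$ is a wide-sense stationary vector sequence, so the matrix-valued Bochner--Herglotz theorem gives a spectral measure $G(\cdot;\overline\mu,\overline\mu)$. The key algebraic observation is that the Fourier symbol of $\chi_{\overline\mu,\overline s}^{(d)}(B)$ is $\chi_{\overline\mu}^{(d)}(e^{-i\lambda})=\prod_j(1-e^{-i\lambda\mu_j s_j})^{d_j}$, whose zeros are located precisely at the seasonal frequencies $2\pi k_j/s_j$ and match, to the same multiplicity, the zeros of $\beta^{(d)}(i\lambda)=\prod_j\prod_{k_j=-[s_j/2]}^{[s_j/2]}(i\lambda-2\pi ik_j/s_j)^{d_j}$. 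Hence the ratio $\chi_{\overline\mu}^{(d)}(e^{-i\lambda})/\beta^{(d)}(i\lambda)$ has only removable singularities and is non-degenerate off the seasonal set, allowing me to factor $dG(\lambda;\overline\mu,\overline\mu)=|\chi_{\overline\mu}^{(d)}(e^{-i\lambda})/\beta^{(d)}(i\lambda)|^2dF(\lambda)$ with a $\overline\mu$-independent measure $F$. The cross case $\overline\mu_1\neq\overline\mu_2$ follows by polarization, yielding \eqref{strFnaR_vec}. Uniqueness of $c$ and $F$ comes from uniqueness in Bochner's theorem combined with injectivity of multiplication by the non-vanishing symbol.

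For Part 2, I would invoke the Karhunen theorem: the isometry between the closed linear span in $L^2(\Omega)$ of $\{\chi_{\overline\mu,\overline s}^{(d)}(\vec\xi(m)):m\in\mathbb Z\}$ and the weighted Hilbert space of vector-valued functions with inner product induced by the kernel in \eqref{strFnaR_vec} produces an orthogonally scattered vector measure $\vec Z_{\xi^{(d)}}$ on $[-\pi,\pi)$. The representation \eqref{SpectrPred_vec} is then the image of $e^{im\lambda}\chi_{\overline\mu}^{(d)}(e^{-i\lambda})/\beta^{(d)}(i\lambda)$ under this isometry, and the covariance identity for the increments of $\vec Z_{\xi^{(d)}}$ in terms of $F_{pq}$ follows from the isometry property. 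The main obstacle throughout is the careful analysis of the seasonal singularities of $\beta^{(d)}(i\lambda)$: one must verify that the cancellation of zeros between $\chi_{\overline\mu}^{(d)}(e^{-i\lambda})$ and $\beta^{(d)}(i\lambda)$ is uniform in $\overline\mu$ and that the extracted measure $F$ does not depend on $\overline\mu$, which is the point at which the specific form of the normalizing factor $\beta^{(d)}$ becomes essential.
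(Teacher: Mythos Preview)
The paper does not actually prove this theorem in the text; it is stated with citations to Karhunen and to the authors' own earlier paper \cite{Luz_Mokl_extra_GMI}, where the spectral theory of GM increment sequences was developed. Your outline follows precisely the classical Yaglom--Karhunen scheme (also referenced by the paper via \cite{Yaglom:1955,Yaglom,Karhunen}), so in spirit it coincides with what the cited proofs do.

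One imprecision worth flagging: you write that the zeros of $\chi_{\overline\mu}^{(d)}(e^{-i\lambda})$ ``are located precisely at the seasonal frequencies $2\pi k_j/s_j$ and match, to the same multiplicity, the zeros of $\beta^{(d)}(i\lambda)$.'' In fact $(1-e^{-i\lambda\mu_j s_j})^{d_j}$ has zeros at all $\lambda=2\pi k/(\mu_j s_j)$, which depend on $\overline\mu$; the zeros of $\beta^{(d)}(i\lambda)$ are only the $\overline\mu$-independent \emph{subset} $\lambda=2\pi k_j/s_j$. This inclusion (not equality) is what makes the ratio $\chi_{\overline\mu}^{(d)}(e^{-i\lambda})/\beta^{(d)}(i\lambda)$ regular for every $\overline\mu$, and it is precisely the choice of $\beta^{(d)}$ as the common divisor of all the symbols that allows the extracted measure $F$ to be $\overline\mu$-independent. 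Your sketch gestures at this (``uniform in $\overline\mu$''), but the actual argument for $\overline\mu$-independence of $F$ rests on the compatibility relations among GM increments for different step vectors --- e.g.\ expressing $\chi_{\overline\mu_1,\overline s}^{(d)}$ as a finite linear combination of shifts of $\chi_{\overline\mu_2,\overline s}^{(d)}$ --- and this is where the real work sits, not in the removability of singularities per se.
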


Consider another vector-valued stochastic sequence with the stationary GM
increments $\vec \zeta (m)=\vec \xi(m)+\vec \eta(m)$, where $\vec\eta(m)$ is a vector-valued  stationary stochastic sequence, uncorrelated with $\vec\xi(m)$, with the spectral representation
\[
 \vec\eta(m)=\int_{-\pi}^{\pi}e^{i\lambda m}dZ_{\eta}(\lambda),\]
 where $Z_{\eta}(\lambda)=\{Z_{\eta,p}(\lambda)\}_{p=1}^T$, $\lambda\in[-\pi,\pi)$, is a stochastic process with uncorrelated increments, that corresponds to the spectral function $G(\lambda)$ \cite{Hannan}.
The stochastic stationary GM increment $\chi_{\overline{\mu},\overline{s}}^{(d)}(\vec{\zeta}(m))$ allows the spectral representation
\begin{eqnarray*}
 \chi_{\overline{\mu},\overline{s}}^{(d)}(\vec{\zeta}(m))&=&\int_{-\pi}^{\pi}e^{i\lambda m}\frac{\chi_{\overline{\mu}}^{(d)}(e^{-i\lambda})}{\beta^{(d)}(i\lambda)}
 dZ_{\xi^{(n)}}(\lambda)
  +\int_{-\pi}^{\pi}e^{i\lambda m}\chi_{\overline{\mu}}^{(d)}(e^{-i\lambda}) dZ_{\eta }(\lambda),
 \end{eqnarray*}
while $dZ_{\eta }(\lambda)=(\beta^{(d)}(i\lambda))^{-1} dZ_{\eta^{(n)}}(\lambda)$,
$\lambda\in[-\pi,\pi)$. Therefore, in the case where the spectral functions $F(\lambda)$ and $G(\lambda)$ have the spectral densities $f(\lambda)$ and $g(\lambda)$, the spectral density $p(\lambda)=\{p_{ij}(\lambda)\}_{i,j=1}^{T}$ of the stochastic sequence $\vec \zeta(m)$ is determined by the formula
\[
 p(\lambda)=f(\lambda)+|\beta^{(d)}(i\lambda)|^2g(\lambda).\]

For a regular  stationary GM increment sequence
$\chi_{\overline{\mu},\overline{s}}^{(d)}(\vec{\xi}(m))$ \cite{Luz_Mokl_filt_GMI}, there exists an
innovation sequence ${\vec\varepsilon(u)=\{\varepsilon_k(u)\}_{k=1}^q, u \in\mathbb Z}$
and a sequence of matrix-valued
functions $\varphi^{(d)}(k,\overline{\mu}) =\{\varphi^{(d)}_{ij}(k,\overline{\mu}) \}_{i=\overline{1,T}}^{j=\overline{1,q}}$, $k\geq0$, such that
\begin{equation}\label{odnostRuhSer}
\sum_{k=0}^{\infty}
\sum_{i=1}^{T}
\sum_{j=1}^{q}
|\varphi^{(d)}_{ij}(k,\overline{\mu})|^2
<\infty,\quad
\chi_{\overline{\mu},\overline{s}}^{(d)}(\vec{\xi}(m))=
\sum_{k=0}^{\infty}\varphi^{(d)}(k,\overline{\mu})\vec\varepsilon (m-k).
\end{equation}
Representation (\ref{odnostRuhSer}) is called \emph{a canonical
moving average representation} of the stochastic stationary GM increment
sequence $\chi_{\overline{\mu},\overline{s}}^{(d)}(\vec{\xi}(m))$. Its  spectral function $F(\lambda)$
 has the spectral density  $f(\lambda)=\{f_{ij}(\lambda)\}_{i,j=1}^T$ admitting the canonical
factorization
\begin{equation*}\label{SpectrRozclad_f}
f(\lambda)=
\varphi(e^{-i\lambda})\varphi^*(e^{-i\lambda}),
\end{equation*}
where the function
$\varphi(z)=\sum_{k=0}^{\infty}\varphi(k)z^k$ has
analytic in the unit circle $\{z:|z|\leq1\}$
components
$\varphi_{ij}(z)=\sum_{k=0}^{\infty}\varphi_{ij}(k)z^k; i=1,\dots,T; j=1,\dots,q$.
Based on moving average  representation
$(\ref{odnostRuhSer})$ define
\[\varphi_{\overline{\mu}}(z)=\sum_{k=0}^{\infty} \varphi^{(d)}(k,\overline{\mu})z^k=\sum_{k=0}^{\infty}\varphi_{\overline{\mu}}(k)z^k.\]
 Then the following relation holds true:
\begin{equation}
\varphi_{\overline{\mu}}(e^{-i\lambda})
\varphi^*_{\overline{\mu}}(e^{-i\lambda})=
 \frac{|\chi_{\overline{\mu}}^{(d)}(e^{-i\lambda})|^2}{|\beta^{(d)}(i\lambda)|^2}f(\lambda)=
 \prod_{j=1}^r\frac{\ld|1-e^{-i\lambda\mu_js_j}\rd|^{2d_j}}{\prod_{k_j=-[s_j/2]}^{[s_j/2]}|\lambda-2\pi  k_j/s_j|^{2d_j}}f(\lambda).
\label{dd}
\end{equation}

The one-sided moving average representation (\ref{odnostRuhSer}) and
relation  (\ref{dd}) are used for finding the mean square optimal
estimates of unobserved values of vector-valued  sequences with stationary GM increments.

\section{Hilbert space projection method of filtering}\label{classical_filtering}

\subsection{Filtering of vector-valued stochastic sequence with stationary GM increments} \label{classical_filtering_vector}

Consider a vector-valued stochastic sequence $\vec{\xi}(m)$ with stationary GM increments constructed from transformation \eqref{PerehidXi} and a vector-valued stationary stochastic sequence $\vec\eta(m)$ uncorrelated with the sequence $\vec\xi(m)$.
Let the stationary GM increment sequence $\chi_{\overline{\mu},\overline{s}}^{(d)}(\vec{\xi}(m))=\{\chi_{\overline{\mu},\overline{s}}^{(d)}(\xi_p(m))\}_{p=1}^{T}$
and the stationary  sequence $\vec\eta(m)$ have  absolutely continuous spectral functions $F(\lambda)$ and $G(\lambda)$ with the spectral densities $f(\lambda)=\{f_{ij}(\lambda)\}_{i,j=1}^{T}$ and $g(\lambda)=\{g_{ij}(\lambda)\}_{i,j=1}^{T}$ respectively.
Without loss of generality   assume that $ \mathsf{E}\chi_{\overline{\mu},\overline{s}}^{(d)}(\vec{\xi}(m))=0$, $\mathsf{E}\vec\eta(m)=0$ and $\overline{\mu}>\overline{0}$.

\textbf{Filtering problem.} Consider the problem of mean square optimal linear estimation of the functional
\begin{equation}
A\vec{\xi}=\sum_{k=0}^{\infty}(\vec{a}(k))^{\top}\vec{\xi}(-k),
\end{equation}
which depends on unobserved values of a stochastic sequence $\vec{\xi}(k)=\{\xi_{p}(k)\}_{p=1}^{T}$ with stationary GM
increments. Estimates are based on observations of the sequence $\vec\zeta(k)=\vec\xi(k)+\vec\eta(k)$ at points $k=0,-1,-2,\ldots$.

We  suppose that the  conditions on coefficients $\vec{a}(k)=\{a_{p}(k)\}_{p=1}^{T}$, $k\geq0$
 \be\label{umova
na a_f_st.n_d}
\sum_{k=0}^{\infty}\|\vec{a}(k)\|<\infty,\quad
\sum_{k=0}^{\infty}(k+1)\|\vec{a}(k)\|^{2}<\infty,\ee
and  \emph{the minimality condition} on the spectral densities $f(\lambda)$ and $g(\lambda)$
\be
 \ip \text{Tr}\left[ \frac{|\beta^{(d)}(i\lambda)|^2}{|\chi_{\overline{\mu}}^{(d)}(e^{-i\lambda})|^2}\ld(f(\lambda)+|\beta^{(d)}(i\lambda)|^2 g(\lambda)\rd)^{-1}\right]
 d\lambda<\infty.
\label{umova11_f_st.n_d}
\ee
are satisfied.
The second condition \eqref{umova11_f_st.n_d} is the necessary and sufficient one under which the mean square error of the optimal estimate of functional  $A\vec\xi$ is not equal to $0$.

Any linear estimate $\widehat{A}\vec\xi$ of the functional $A\vec\xi$
allows the representation
\be
 \label{otsinka A_f_st.n_d}
 \widehat{A}\vec\xi=\sum_{k=0}^{\infty}(\vec a(k))^{\top}(\vec\xi(-k)+\vec\eta(-k))-\ip
 (\vec h_{\overline{\mu}}(\lambda))^{\top}dZ_{\xi^{(n)}+\eta^{(n)}}(\lambda), \ee
where
$ \vec h_{\overline{\mu}}(\lambda)=\{ h_p(\lambda)\}_{p=1}^T$ is the spectral characteristic of the estimate
$\widehat{A}\vec\eta$.

In the Hilbert space
$L_2(p)$, define
\[L_2^{0}(p)=\overline{span}\{
 e^{i\lambda k}\chi_{\overline{\mu}}^{(d)}(e^{-i\lambda})(\beta^{(d)}(i\lambda))^{-1}\vec\delta_l:\,\vec\delta_l=\{\delta_{lp}\}_{p=1}^T,\,l=1,\ldots,T,\,k\leq0\} .\]

Define the following matrix-valued Fourier coefficients:
\begin{eqnarray*}
 S_{\overline{\mu}}(k)&=&\frac{1}{2\pi}\ip e^{-i\lambda k}
 \frac{|\beta^{(d)}(i\lambda)|^2}{|\chi_{\overline{\mu}}^{(d)}(e^{-i\lambda})|^2}\ld[g(\lambda)(f(\lambda)+|\beta^{(d)}(i\lambda)|^2g(\lambda))^{-1}\rd]^{\top}d\lambda,
 \quad k\in \mr Z,
 \\
 P_{\overline{\mu}}(k)&=&\frac{1}{2\pi}\ip e^{-i\lambda k}
 \dfrac{|\beta^{(d)}(i\lambda)|^2}{|\chi_{\overline{\mu}}^{(d)}(e^{-i\lambda})|^2}\ld[(f(\lambda)+|\beta^{(d)}(i\lambda)|^2g(\lambda))^{-1}\rd]^{\top}
 d\lambda,\quad k\in \mr Z,
 \\
 Q(k)&=&\frac{1}{2\pi}\ip
 e^{-i\lambda k} \ld[f(\lambda)(f(\lambda)+|\beta^{(d)}(i\lambda)|^2g(\lambda))^{-1}g(\lambda)\rd]^{\top}d\lambda,\quad k\in \mr Z.
 \end{eqnarray*}
Define the vectors $\me a=((\vec a(0))^{\top},(\vec a(1))^{\top},(\vec a(2))^{\top},\ldots)^{\top}$ and  $ \me a_{\overline{\mu}}=((\vec a_{\overline{\mu}}(0))^{\top},
(\vec a_{\overline{\mu}}(1))^{\top},(\vec a_{\overline{\mu}}(2))^{\top},\ldots)^{\top}$, where the  coefficients
$ \vec a_{\overline{\mu}}(k)=\vec a_{-{\overline{\mu}}}(k-n(\gamma))$, $k\geq0$,
\be\label{coeff a_mu_f_st.n_d}
 \vec a_{-\overline{\mu}}(m)=\sum_{l=\max\ld\{m,0\rd\}}^{m+n(\gamma)}e_{\gamma}(l-m)\vec a(l),\quad m\geq-n(\gamma).\ee
Define the matrices $\me S_{\overline{\mu}}$, $\me P_{\overline{\mu}}$  and $\me Q$  by the matrix-valued entries $(\me S_{\overline{\mu}})_{l, k} =S_{\overline{\mu}}(l+1+k-n(\gamma))$, $(\me P_{\overline{\mu}})_{l,k}=P_{\overline{\mu}}(l-k)$ and  $(\me Q)_{l,k}=Q(l-k)$,  $l,k\geq0$.

The solution to the filtering problem   is described by the following theorem in terms of Fourier coefficients $\{S_{\overline{\mu}}(k),P_{\overline{\mu}}(k),Q(k):k\in\mr Z\}$ .

\begin{theorem}[\cite{Luz_Mokl_filt_GMI}]\label{thm_est_A}
A solution $\widehat{A}\vec\xi$ to the filtering problem for the linear functional $A\vec{\xi}$ of the values of a vector-valued stochastic sequence $\vec{\xi}(m)$ with
stationary  GM increments under conditions  (\ref{umova na a_f_st.n_d}) and (\ref{umova11_f_st.n_d}) is  calculated by formula (\ref{otsinka A_f_st.n_d}).
The spectral characteristic $\vec h_{\overline{\mu}}(\lambda)$ and the value of the mean square error $\Delta(f,g;\widehat{A}\vec\xi)$ are calculated by the formulas
\be\label{spectr A_f_st.n_d}
 (\vec h_{\overline{\mu}}(\lambda))^{\top}=\ld[\chi_{\overline{\mu}}^{(d)}(e^{i\lambda})(A(e^{-i\lambda }))^{\top}
  g(\lambda)-(C_{\overline{\mu}}(e^{i\lambda}))^{\top}\rd]p^{-1}(\lambda)\frac{\overline{\beta^{(d)}(i\lambda)}
}{\chi_{\overline{\mu}}^{(d)}(e^{i\lambda})},\ee
where
\[
A(e^{-i\lambda })=\sum_{k=0}^{\infty}\vec a(k)e^{-i\lambda k},
\quad
 C_{\overline{\mu}}(e^{i\lambda})=\sum_{k=0}^{\infty} \ld(\me
 P_{\overline{\mu}}^{-1}\me S_{\overline{\mu}}\me a_{\overline{\mu}}\rd)_ke^{i\lambda
 (k+1)},\]
 and
\begin{eqnarray}
  \Delta\ld(f,g;\widehat{A}\vec\xi\rd)=\mt E \ld|A\vec\xi-\widehat{A}\vec\xi\rd|^2=\ld\langle \me S_{\overline{\mu}}
 \me a_{\overline{\mu}},\me P_{\overline{\mu}}^{-1}\me S_{\overline{\mu}}\me a_{\overline{\mu}}\rd\rangle+\ld\langle\me Q\me a,\me
 a\rd\rangle.\label{poh A_f_st.n_d}\end{eqnarray}
\end{theorem}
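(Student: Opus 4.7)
The plan is to apply the Hilbert space projection method in the spectral domain, as in the classical Kolmogorov--Wiener scheme, adapted to the GM-increment setting developed in Section~\ref{spectral_ theory}. I would first form the closed linear span $H^0$ of the observed GM increments $\chi_{\overline{\mu},\overline{s}}^{(d)}(\vec\zeta(k))$, $k\leq 0$. By Theorem~\ref{thm1} and the spectral representation of $\chi_{\overline{\mu},\overline{s}}^{(d)}(\vec\zeta(m))$ recalled just before the statement, this subspace is isometric to $L_2^0(p)$, where $p(\lambda)=f(\lambda)+|\beta^{(d)}(i\lambda)|^2 g(\lambda)$, the generators being $e^{i\lambda k}\chi_{\overline{\mu}}^{(d)}(e^{-i\lambda})(\beta^{(d)}(i\lambda))^{-1}\vec\delta_l$. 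This isometry lets me write an arbitrary linear estimate as in (\ref{otsinka A_f_st.n_d}): represent $A\vec\zeta=\sum(\vec a(k))^\top(\vec\xi(-k)+\vec\eta(-k))$ directly, and subtract the stochastic integral with an unknown spectral characteristic $\vec h_{\overline\mu}(\lambda)$.

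Next, I would express the mean square error $\mt E|A\vec\xi-\widehat A\vec\xi|^2$ as an integral quadratic in $\vec h_{\overline\mu}$ by substituting the spectral representations of $\vec\xi$ and $\vec\eta$ and using their uncorrelatedness; the factor $|\chi_{\overline{\mu}}^{(d)}(e^{-i\lambda})/\beta^{(d)}(i\lambda)|^2$ enters from the normalisation in (\ref{dd}). Imposing the orthogonality principle, that $A\vec\xi-\widehat A\vec\xi$ is orthogonal in $L_2$ to every generator of $L_2^0(p)$, produces a Wiener--Hopf-type condition: after clearing the common $\beta^{(d)}$ and $\chi_{\overline{\mu}}^{(d)}$ factors, a certain symbol must be the boundary value of a function analytic in $\{|z|<1\}$, parametrized by the unknown vector $\me c$ of Fourier coefficients of $C_{\overline\mu}(e^{i\lambda})$.

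Identifying the kernels $S_{\overline\mu}(k)$, $P_{\overline\mu}(k)$, $Q(k)$ as the Fourier coefficients of the symbols that arise, the analyticity constraint reduces to the infinite linear system $\me P_{\overline\mu}\,\me c=\me S_{\overline\mu}\me a_{\overline\mu}$; under the minimality condition (\ref{umova11_f_st.n_d}), the operator $\me P_{\overline\mu}$ is boundedly invertible on $\ell_2$, so $\me c=\me P_{\overline\mu}^{-1}\me S_{\overline\mu}\me a_{\overline\mu}$. Substituting this back into the ansatz for $\vec h_{\overline\mu}$ gives (\ref{spectr A_f_st.n_d}); evaluating the remaining quadratic form by Parseval and splitting through $p=f+|\beta^{(d)}|^2 g$ yields the two terms in (\ref{poh A_f_st.n_d}).

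The main obstacle is the bookkeeping around the coefficient transformation $\vec a\mapsto\vec a_{\overline\mu}$ from (\ref{coeff a_mu_f_st.n_d}). The functional $A\vec\xi$ is written in terms of the raw values $\vec\xi(-k)$, while the observable subspace is generated by their GM increments, so an Abel-type resummation shifted by $n(\gamma)$, together with the expansion $\chi_{\overline{\mu},\overline{s}}^{(d)}(B)=\sum_{k=0}^{n(\gamma)}e_\gamma(k)B^k$, is required to re-express $A\vec\xi$ as a linear functional of the increments. Pinning down the index shifts, controlling the edge terms, and checking $\ell_2$-convergence of $\me a_{\overline\mu}$ under (\ref{umova na a_f_st.n_d}) is the delicate technical point; this is done in detail in \cite{Luz_Mokl_filt_GMI}, whose argument I would follow.
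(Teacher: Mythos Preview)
Your outline is correct and follows the standard Hilbert space projection (Kolmogorov--Wiener) scheme; note, however, that the present paper does not prove this theorem at all---it is quoted verbatim from \cite{Luz_Mokl_filt_GMI} (see the citation in the theorem header), so there is no in-paper proof to compare against. Your sketch matches the method used in that reference: the decomposition $A\vec\xi = A\vec\zeta - A\vec\eta$, orthogonal projection of $A\vec\eta$ onto $L_2^0(p)$ via the orthogonality conditions, reduction to the Toeplitz system $\me P_{\overline\mu}\me c=\me S_{\overline\mu}\me a_{\overline\mu}$, and the Abel-type resummation through the coefficients $e_\gamma(k)$ to pass from $\vec a$ to $\vec a_{\overline\mu}$.
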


\begin{remark}
The filtering problem in the presence of fractional integration is considered in \cite{Luz_Mokl_filt_GMI}.
\end{remark}

\subsection{Filtering based on factorizations of the spectral densities}

The main goal of the article is to derive the classical and minimax estimates of the functional $A\vec \xi$ in terms of the coefficients of the canonical factorizations of the spectral densities $f(\lambda)$, $g(\lambda)$ and $f(\lambda)+|\beta^{(d)}(i\lambda)|^2g(\lambda)$.

Let the following canonical factorizations take place
\begin{equation} \label{fakt1}
 \frac{|\beta^{(d)}(i\lambda)|^2}{|\chi_{\overline{\mu}}^{(d)}(e^{-i\lambda})|^2}
 (f(\lambda)+|\beta^{(d)}(i\lambda)|^2g(\lambda))
 =\Theta_{\overline{\mu}}(e^{-i\lambda})\Theta_{\overline{\mu}}^*(e^{-i\lambda}),
\quad \Theta_{\overline{\mu}}(e^{-i\lambda})=\sum_{k=0}^{\infty}\theta_{\overline{\mu}}(k)e^{-i\lambda k},
\end{equation}
\be \label{fakt3}
 g(\lambda)=\sum_{k=-\infty}^{\infty}g(k)e^{i\lambda k}=\Phi(e^{-i\lambda})\Phi^*(e^{-i\lambda}), \quad
 \Phi(e^{-i\lambda})=\sum_{k=0}^{\infty}\phi(k)e^{-i\lambda k}.
 \ee
Define the matrix-valued function $\Psi_{\overline{\mu}}(e^{-i\lambda})= \{\Psi_{\overline{\mu},ij}(e^{-i\lambda})\}_{i=\overline{1,q}}^{j=\overline{1,T}}$ by the equation
\[
\Psi_{\overline{\mu}}(e^{-i\lambda})\Theta_{\overline{\mu}}(e^{-i\lambda})=E_q,
\]
where $E_q$ is an identity $q\times q$ matrix.
One can check that the following factorization takes place
\begin{equation} \label{fakt2}
\frac{|\beta^{(d)}(i\lambda)|^2}{|\chi_{\overline{\mu}}^{(d)}(e^{-i\lambda})|^2}
 (f(\lambda)+|\beta^{(d)}(i\lambda)|^2g(\lambda))^{-1} =
 \Psi_{\overline{\mu}}^*(e^{-i\lambda})\Psi_{\overline{\mu}}(e^{-i\lambda}), \quad
 \Psi_{\overline{\mu}}(e^{-i\lambda})=\sum_{k=0}^{\infty}\psi_{\overline{\mu}}(k)e^{-i\lambda k},
\end{equation}

\begin{remark}\label{remark_density_adjoint}
Any spectral density matrix $f(\lambda)$ is self-adjoint: $f(\lambda)=f^*(\lambda)$. Thus, $(f(\lambda))^{\top}=\overline{f(\lambda)}$. One can check that an inverse spectral density $f^{-1}(\lambda)$ is also self-adjoint $f^{-1}(\lambda)=(f^{-1}(\lambda))^*$ and $(f^{-1}(\lambda))^{\top}=\overline{f^{-1}(\lambda)}$.
\end{remark}

The following Lemmas provide representations of  $\me P_{\overline{\mu}}$ and $\me P_{\overline{\mu}}^{-1}\me S_{\overline{\mu}}\me a_{\overline{\mu}}$, which contain  coefficients of factorizations (\ref{fakt1}) -- (\ref{fakt2}).

\begin{lemma}\label{lema_fact_2}
Let factorization (\ref{fakt1})  takes place and let $q\times T$ matrix function $\Psi_{\overline{\mu}}(e^{-i\lambda})$ satisfy an equation $\Psi_{\overline{\mu}}(e^{-i\lambda})\Phi_{\overline{\mu}}(e^{-i\lambda})=E_q$. Define the linear operators
 $ \Psi_{\overline{\mu}}$ and  $ \Theta_{\overline{\mu}}$ in the space $\ell_2$ by the matrices with the matrix entries
 $( \Psi_{\overline{\mu}})_{k,j}=\psi_{\overline{\mu}}(k-j)$, $( \Theta_{\overline{\mu}})_{k,j}=\theta_{\overline{\mu}}(k-j)$ for $0\leq j\leq k$, $(
\Psi_{\overline{\mu}})_{k,j}=0$, $(
\Theta_{\overline{\mu}})_{k,j}=0$ for $0\leq k<j$.
Then:
\\
a) the linear operator $\me P_{\overline{\mu}}$  admits the factorization \[\me
P_{\overline{\mu}}=(\Psi_{\overline{\mu}})^{\top} \overline{\Psi}_{\overline{\mu}};\]
\\
b) the inverse operator $(\me
P_{\overline{\mu}})^{-1}$ admits the factorization
\[
 (\me
P_{\overline{\mu}})^{-1}= \overline{\Theta}_{\overline{\mu}}(\Theta_{\overline{\mu}})^{\top}.\]
\end{lemma}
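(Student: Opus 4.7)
My plan is to reduce both statements to operator-algebra manipulations in the algebra of block-lower-triangular Toeplitz-type operators, exploiting the fact that composition of such operators corresponds to the Cauchy product of their matrix generating series. In particular, any pointwise matrix identity of the form $A(e^{-i\lambda})B(e^{-i\lambda})=\text{const}$ promotes to the corresponding operator identity on the appropriate block $\ell_2$ space.

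For part (a), I would begin by inserting factorization (\ref{fakt2}) into the definition of $P_{\overline{\mu}}(k)$ and applying $(\Psi^*\Psi)^{\top}=\Psi^{\top}\overline{\Psi}$ (valid since the spectral density and its inverse are self-adjoint, cf.\ Remark \ref{remark_density_adjoint}) to obtain
\[
P_{\overline{\mu}}(k)=\frac{1}{2\pi}\int_{-\pi}^{\pi}e^{-i\lambda k}\Psi_{\overline{\mu}}^{\top}(e^{-i\lambda})\overline{\Psi_{\overline{\mu}}(e^{-i\lambda})}\,d\lambda.
\]
Then I would expand $\Psi_{\overline{\mu}}^{\top}(e^{-i\lambda})=\sum_{n\geq 0}\psi_{\overline{\mu}}(n)^{\top}e^{-i\lambda n}$ and $\overline{\Psi_{\overline{\mu}}(e^{-i\lambda})}=\sum_{m\geq 0}\overline{\psi_{\overline{\mu}}(m)}e^{i\lambda m}$ and integrate term-by-term using the orthogonality of the trigonometric system, obtaining $P_{\overline{\mu}}(k)=\sum_{n,m\geq 0,\;n-m=-k}\psi_{\overline{\mu}}(n)^{\top}\overline{\psi_{\overline{\mu}}(m)}$. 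A parallel block-by-block computation gives $((\Psi_{\overline{\mu}})^{\top}\overline{\Psi}_{\overline{\mu}})_{l,k}=\sum_{j\geq\max(l,k)}\psi_{\overline{\mu}}(j-l)^{\top}\overline{\psi_{\overline{\mu}}(j-k)}$; the change of indices $n=j-l$, $m=j-k$ matches this with $P_{\overline{\mu}}(l-k)=(\me P_{\overline{\mu}})_{l,k}$, which proves (a).

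For part (b), the identity $\Psi_{\overline{\mu}}(e^{-i\lambda})\Theta_{\overline{\mu}}(e^{-i\lambda})=E_q$ lifts via the Cauchy product formula to the operator equation $\Psi_{\overline{\mu}}\Theta_{\overline{\mu}}=\me I$; in the non-degenerate case $q=T$ (guaranteed by the minimality condition (\ref{umova11_f_st.n_d})), the two matrix functions are mutual inverses pointwise, so that also $\Theta_{\overline{\mu}}\Psi_{\overline{\mu}}=\me I$. Using part (a) together with the block versions of $(AB)^{\top}=B^{\top}A^{\top}$ and $\overline{AB}=\overline{A}\,\overline{B}$,
\begin{align*}
\me P_{\overline{\mu}}\cdot\overline{\Theta}_{\overline{\mu}}(\Theta_{\overline{\mu}})^{\top}
&=(\Psi_{\overline{\mu}})^{\top}\overline{\Psi}_{\overline{\mu}}\,\overline{\Theta}_{\overline{\mu}}(\Theta_{\overline{\mu}})^{\top}
=(\Psi_{\overline{\mu}})^{\top}\,\overline{\Psi_{\overline{\mu}}\Theta_{\overline{\mu}}}\,(\Theta_{\overline{\mu}})^{\top}\\
&=(\Psi_{\overline{\mu}})^{\top}(\Theta_{\overline{\mu}})^{\top}=(\Theta_{\overline{\mu}}\Psi_{\overline{\mu}})^{\top}=\me I,
\end{align*}
which yields $(\me P_{\overline{\mu}})^{-1}=\overline{\Theta}_{\overline{\mu}}(\Theta_{\overline{\mu}})^{\top}$.

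The main obstacle I anticipate is the careful bookkeeping of the block-transpose and block-conjugation operations — namely, verifying that $\overline{\Psi_{\overline{\mu}}\Theta_{\overline{\mu}}}=\overline{\Psi}_{\overline{\mu}}\,\overline{\Theta}_{\overline{\mu}}$ and $(\Psi_{\overline{\mu}}\Theta_{\overline{\mu}})^{\top}=(\Theta_{\overline{\mu}})^{\top}(\Psi_{\overline{\mu}})^{\top}$ hold under the block conventions used here, and that the Cauchy product of two matrix-valued power series reproduces the block entries of the composed Toeplitz-type operator (with proper matching of the $q\times T$ and $T\times q$ block shapes). Once these identities are in place, the algebraic manipulations above close the argument cleanly.
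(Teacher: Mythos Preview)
Your argument is correct. The paper does not actually prove this lemma in the present text; it merely writes ``See \cite{Luz_Mokl_extra_noise_PCI}'' and defers the verification to an earlier reference. Your direct computation---inserting factorization (\ref{fakt2}) into the Fourier coefficient defining $P_{\overline{\mu}}(k)$ for part (a), and then using the lower-triangular Toeplitz structure to promote the pointwise identity $\Psi_{\overline{\mu}}(e^{-i\lambda})\Theta_{\overline{\mu}}(e^{-i\lambda})=E_q$ to an operator identity for part (b)---is precisely the standard way such factorization lemmas are established, and it is what one would expect the cited reference to contain. The index bookkeeping you outline (matching $n-m=-k$ with the Toeplitz entry $P_{\overline{\mu}}(l-k)$ via $n=j-l$, $m=j-k$) checks out, and your observation that the minimality condition (\ref{umova11_f_st.n_d}) forces invertibility of the spectral density, hence $q=T$, is the right justification for the two-sided inverse $\Theta_{\overline{\mu}}\Psi_{\overline{\mu}}=\me I$ needed in part (b).
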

\begin{proof} See \cite{Luz_Mokl_extra_noise_PCI}.
\end{proof}

\begin{lemma}\label{lema_fact_3}
  Let  factorizations (\ref{fakt1}) and (\ref{fakt3}) take place.
 Define by $ \widetilde{e}_{\overline{\mu}}(m)=\ld(\Theta^{\top}_{\overline{\mu}}\me S_{\overline{\mu}}\widetilde{\me a}_{\overline{\mu}}\rd)_m$, $m\geq0$, the $m$th element of the vector
$\widetilde{\me e}_{\overline{\mu}}=\Theta^{\top}_{\overline{\mu}}\me S_{\overline{\mu}}\widetilde{\me a}_{\overline{\mu}}$. Then
\[
\widetilde{e}_{\overline{\mu}}(m)=\sum_{j=-n(\gamma)}^{\infty}Z_{\overline{\mu}}(m+j+1)a_{-\mu}(j),
\]
  where $Z_{\overline{\mu}}(j)$, $j\in \mr Z$, are defined as
\begin{eqnarray*}
  Z_{\overline{\mu}}(j)=\sum_{l=0}^{\infty}\overline{\psi}_{\overline{\mu}}(l)\overline{g}(l-j),\, j\in \mr Z,
 \quad
 g(k)=\sum_{m=\max\{0,-k\}}^{\infty}\phi(m)\phi^*(k+m),\, k\in\mr Z.
\end{eqnarray*}
\end{lemma}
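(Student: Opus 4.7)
My plan is to unfold the block product $\widetilde{\me e}_{\overline{\mu}}=\Theta_{\overline{\mu}}^{\top}\me S_{\overline{\mu}}\widetilde{\me a}_{\overline{\mu}}$ coordinate-wise and then recognize the resulting sum as the Fourier coefficient of a function built from the canonical factorizations. Since $\Theta_{\overline{\mu}}$ is block lower triangular with $(\Theta_{\overline{\mu}})_{k,j}=\theta_{\overline{\mu}}(k-j)$ for $k\geq j$, its transpose is block upper triangular with $(\Theta_{\overline{\mu}}^{\top})_{m,l}=\theta_{\overline{\mu}}(l-m)^{\top}$ for $l\geq m$. Expanding the definition of $(\me S_{\overline{\mu}})_{l,k}$ and substituting $j=k-n(\gamma)$ so that $\widetilde{a}_{\overline{\mu}}(k)=a_{-\overline{\mu}}(j)$, then interchanging the summations and setting $u=l-m$, reduces the claim to proving
\[
\sum_{u=0}^{\infty}\theta_{\overline{\mu}}^{\top}(u)\,S_{\overline{\mu}}(u+r)=Z_{\overline{\mu}}(r),\qquad r\in\mathbb Z,
\]
and then applying this identity with $r=m+j+1$.

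The second step is to collapse the spectral-side expression. Inserting the integral formula for $S_{\overline{\mu}}(u+r)$ and summing the $u$-series to $\Theta_{\overline{\mu}}^{\top}(e^{-i\lambda})$, the left-hand side becomes the $r$-th Fourier coefficient of $(M(\lambda)\Theta_{\overline{\mu}}(e^{-i\lambda}))^{\top}$, where $M(\lambda)=\frac{|\beta^{(d)}(i\lambda)|^2}{|\chi_{\overline{\mu}}^{(d)}(e^{-i\lambda})|^2}g(\lambda)(f(\lambda)+|\beta^{(d)}(i\lambda)|^2g(\lambda))^{-1}$. Combining factorization (\ref{fakt2}) with the identity $\Psi_{\overline{\mu}}\Theta_{\overline{\mu}}=E_q$ and the factorization $g=\Phi\Phi^{*}$ from (\ref{fakt3}), one gets
\[
M\Theta_{\overline{\mu}}=g\Psi_{\overline{\mu}}^{*}\Psi_{\overline{\mu}}\Theta_{\overline{\mu}}=g\Psi_{\overline{\mu}}^{*}=\Phi\Phi^{*}\Psi_{\overline{\mu}}^{*},
\]
hence $(M\Theta_{\overline{\mu}})^{\top}=\overline{\Psi_{\overline{\mu}}}\,\overline{\Phi}\,\Phi^{\top}$ by Remark \ref{remark_density_adjoint}.

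Finally, I would expand the three factors as power series in $e^{i\lambda}$ and $e^{-i\lambda}$, extract the coefficient of $e^{i\lambda r}$ (obtained when $l+n-p=r$ with all indices nonnegative), and use $\overline{\phi^{*}(k)}=\phi(k)^{\top}$ to rewrite it as
\[
\sum_{l\geq 0}\overline{\psi_{\overline{\mu}}(l)}\sum_{n\geq\max(0,r-l)}\overline{\phi(n)}\,\phi(n+l-r)^{\top}=\sum_{l\geq 0}\overline{\psi_{\overline{\mu}}(l)}\,\overline{g(l-r)}=Z_{\overline{\mu}}(r),
\]
which is exactly the required identity. The main technical obstacle is bookkeeping the transposes and complex conjugates throughout and justifying the interchanges of summations; the latter is guaranteed by the summability condition (\ref{umova na a_f_st.n_d}), the minimality condition (\ref{umova11_f_st.n_d}), and the canonical factorization hypotheses, which together ensure square-summability of the block coefficient sequences $\{\theta_{\overline{\mu}}(k)\}$, $\{\psi_{\overline{\mu}}(k)\}$, and $\{\phi(k)\}$.
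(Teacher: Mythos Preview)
Your proposal is correct and follows essentially the same route as the paper: both arguments reduce to the identity $\sum_{u\ge 0}\theta_{\overline{\mu}}^{\top}(u)S_{\overline{\mu}}(u+r)=Z_{\overline{\mu}}(r)$ and prove it via the relation $\Psi_{\overline{\mu}}\Theta_{\overline{\mu}}=E_q$ combined with the factorization of $g$. The only cosmetic difference is that you apply $\Psi_{\overline{\mu}}\Theta_{\overline{\mu}}=E_q$ directly at the spectral level (collapsing $M\Theta_{\overline{\mu}}$ to $g\Psi_{\overline{\mu}}^{*}$), whereas the paper first writes $S_{\overline{\mu}}(k)=\sum_{l\ge 0}\psi_{\overline{\mu}}^{\top}(l)Z_{\overline{\mu}}(l+k)$ and then invokes the convolution identity $\sum_{p}\psi_{\overline{\mu}}(k-p)\theta_{\overline{\mu}}(p-m)=\mathrm{diag}_q(\delta_{k,m})$ on the coefficient side.
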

\begin{proof}
See Appendix.
\end{proof}

 Define  the linear operators $\me G^{-}$, $\me G^{+}$,$\widetilde{\Phi}^{+}$ in the space $\ell_2$ by matrices with the matrix entries $(\me G^{-})_{l,k}=\overline{g}(l-k)$, $(\me G^{+})_{l,k}=\overline{g}(l+k)$, $\phi^{\top}(k+j)$ $l,k\in \mr Z$.  And the linear operator
  $\widetilde{\Phi}$ in the space $\ell_2$ determined by a matrix with the matrix entries
  $(\widetilde{\Phi})_{k,j}=\phi^{\top}(k-j)$ for $0\leq j\leq k$,  $ (\widetilde{\Phi})_{k,j}=0$ for $0\leq k<j$.

 Define also the coefficients $\{\vec b_{-\mu}(k):k\geq0\}$ as follows: $\vec b_{-\mu}(0)=0$, $\vec b_{-\mu}(k)=\vec a_{-\mu}(-k)$ for $1\leq k\leq n(\gamma)$, $\vec b_{-\mu}(k)=0$ for $k>n(\gamma)$, where coefficients $\vec a_{-\mu}(k)$ are calculated by formula (\ref{coeff a_mu_f_st.n_d}), and the vectors
\[\me a_{-\mu}=((\vec a_{-\mu}(0))^{\top},(\vec a_{-\mu}(1))^{\top},(\vec a_{-\mu}(2))^{\top},\ldots)^{\top}, \quad \me b_{-\mu}=((\vec b_{-\mu}(0))^{\top},(\vec b_{-\mu}(1))^{\top},(\vec b_{-\mu}(2))^{\top},\ldots)^{\top}.\]

The following theorem describes a solution to the filtering problem in the case when the spectral densities $f(\lambda)$ and $g(\lambda)$  admit  canonical factorizations (\ref{fakt1}) -- (\ref{fakt2}) .

\begin{theorem}\label{thm3_f_st.n_d_fact}
A solution $\widehat{A}\vec\xi$ to the filtering problem for the linear functional $A\vec{\xi}$ of the values of a vector-valued stochastic sequence $\vec{\xi}(m)$ with
stationary  GM increments under condition   (\ref{umova na a_f_st.n_d}) and provided that the spectral densities $f(\lambda)$ and $g(\lambda)$ of the stochastic vector sequences $\vec\xi(m)$ and $\vec\eta(m)$ admit  canonical factorizations (\ref{fakt1}) -- (\ref{fakt2}) is  calculated by formula (\ref{otsinka A_f_st.n_d}).
The spectral characteristic $\vec h_{\overline{\mu}}(\lambda)$ is calculated by the formulas
\begin{eqnarray}
\notag \vec h_{\overline{\mu}}(\lambda) &=&\frac{\chi_{\overline{\mu}}^{(d)}(e^{-i\lambda})}{\beta^{(d)}(i\lambda)}
\ld(\sum_{k=0}^{\infty}\psi^{\top}_{\overline{\mu}}(k)e^{-i\lambda k}\rd)
\sum_{m=0}^{\infty}( (\widetilde{\Psi}_{\overline{\mu}})^*\me G^{-}\me a_{-\mu} +(\widetilde{\Psi}_{\overline{\mu}})^* \me G^{+}\me b_{-\mu})_m e^{-i\lambda m}
\\
 &=&\frac{\chi_{\overline{\mu}}^{(d)}(e^{-i\lambda})}{\beta^{(d)}(i\lambda)}
\ld(\sum_{k=0}^{\infty}\psi^{\top}_{\overline{\mu}}(k)e^{-i\lambda k}\rd)
\sum_{m=0}^{\infty}(\overline{\psi}_{\overline{\mu}} \me C^{-}_{\overline{\mu},g} +\overline{\psi}_{\overline{\mu}} \me C^{+}_{\overline{\mu},g})_m e^{-i\lambda m},\label{spectr A_f_st.n_d_fact}
\end{eqnarray}
where $\overline{\psi}_{\overline{\mu}}=(\overline{\psi}_{\overline{\mu}}(0),\overline{\psi}_{\overline{\mu}}(1),\overline{\psi}_{\overline{\mu}}(2),\ldots)$,
\[
(\overline{\psi}_{\overline{\mu}} \me C^{\pm}_{\overline{\mu},g} )_m=\sum_{k=0}^{\infty}\overline{\psi}_{\overline{\mu}}(k)\me c^{\pm}_{\overline{\mu},g}(k+m),
\]
\[
\notag \me c^{-}_{\overline{\mu},g}(m)=\sum_{k=0}^{\infty}\overline{g}(m-k)a_{-\overline{\mu}}(k)
=\sum_{l=0}^{\infty}\overline{\phi}(l)\sum_{k=0}^{l+m}\phi^{\top}(l+m-k)a_{-\overline{\mu}}(k)
=\sum_{l=0}^{\infty}\overline{\phi}(l)(\widetilde{\Phi}\me a_{-\overline{\mu}})_{l+m}=(\widetilde{\Phi}^*\widetilde{\Phi}\me a_{-\overline{\mu}})_{m},
\]
\[
\me c^{+}_{\overline{\mu},g}(m)=\sum_{k=0}^{\infty}\overline{g}(m+k)b_{-\overline{\mu}}(k)
=\sum_{l=0}^{\infty}\overline{\phi}(l)\sum_{k=0}^{\infty}\phi^{\top}(l+m+k)b_{-\overline{\mu}}(k)
=\sum_{l=0}^{\infty}\overline{\phi}(l)(\widetilde{\Phi}^+\me b_{-\overline{\mu}})_{l+m}=(\widetilde{\Phi}^*\widetilde{\Phi}^+\me b_{-\overline{\mu}})_{m}.
\]
The  the value of the mean square error $\Delta(f,g;\widehat{A}\vec\xi)$ is calculated by the formulas
\begin{eqnarray}
 \Delta\ld(f,g;\widehat{A}\xi\rd)=\|\widetilde{\Phi}\me a\|^2-\|\overline{\psi}_{\overline{\mu}}( \me C^{-}_{\overline{\mu},g} + \me C^{+}_{\overline{\mu},g})\|^2.\label{poh A_f_st.n_d_fact}
\end{eqnarray}
\end{theorem}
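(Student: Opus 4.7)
The strategy is to take the formulas from Theorem \ref{thm_est_A}, which give $\vec h_{\overline{\mu}}(\lambda)$ and $\Delta(f,g;\widehat{A}\vec\xi)$ in terms of the Fourier coefficients $\{S_{\overline{\mu}}(k),P_{\overline{\mu}}(k),Q(k)\}$ and the vectors $\me a$, $\me a_{\overline{\mu}}$, and rewrite every object in them using the canonical factorizations (\ref{fakt1})--(\ref{fakt2}) together with (\ref{fakt3}) for $g(\lambda)$. The two lemmas \ref{lema_fact_2} and \ref{lema_fact_3} provide exactly the tools needed: the first expresses $\me P_{\overline{\mu}}^{-1}$ as a product of block triangular operators built from the Fourier coefficients of $\Theta_{\overline{\mu}}$, and the second evaluates the relevant one-sided convolution $\Theta_{\overline{\mu}}^{\top}\me S_{\overline{\mu}}\me a_{\overline{\mu}}$ in closed form in terms of the coefficients $\psi_{\overline{\mu}}(k)$ and $g(k)$.

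For the spectral characteristic, I would substitute (\ref{fakt2}) into $p^{-1}(\lambda)$ in formula (\ref{spectr A_f_st.n_d}). The factor $|\chi_{\overline{\mu}}^{(d)}(e^{-i\lambda})|^{2}/|\beta^{(d)}(i\lambda)|^{2}$ in (\ref{fakt2}) combines with $\overline{\beta^{(d)}(i\lambda)}/\chi_{\overline{\mu}}^{(d)}(e^{i\lambda})$ to yield the prefactor $\chi_{\overline{\mu}}^{(d)}(e^{-i\lambda})/\beta^{(d)}(i\lambda)$ and an outer factor $\Psi_{\overline{\mu}}^{\top}(e^{-i\lambda})=\sum_{k\geq 0}\psi_{\overline{\mu}}^{\top}(k)e^{-i\lambda k}$; Remark \ref{remark_density_adjoint} is used to transpose $\Psi_{\overline{\mu}}^{*}\Psi_{\overline{\mu}}$ through the conjugation/transpose. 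The remaining piece, evaluated on the one-sided Hilbert space $L_{2}^{0}(p)$, is the projection of a bilateral Fourier series onto the nonnegative frequencies; using Lemma \ref{lema_fact_2}(b) this projection is exactly $\me P_{\overline{\mu}}^{-1}\me S_{\overline{\mu}}\me a_{\overline{\mu}}=\overline{\Theta}_{\overline{\mu}}(\Theta_{\overline{\mu}})^{\top}\me S_{\overline{\mu}}\me a_{\overline{\mu}}$, and Lemma \ref{lema_fact_3} turns $(\Theta_{\overline{\mu}})^{\top}\me S_{\overline{\mu}}\me a_{\overline{\mu}}$ into a convolution involving $Z_{\overline{\mu}}(j)$ with index $j\geq -n(\gamma)$. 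I would split this convolution at $j=0$: the part $j\geq 0$ produces the coefficients $\me c^{-}_{\overline{\mu},g}(m)=\sum_{k\geq 0}\overline{g}(m-k)a_{-\overline{\mu}}(k)$, while the part $-n(\gamma)\leq j<0$, after the substitution $k=-j$ and the definition $\vec b_{-\mu}(k)=\vec a_{-\mu}(-k)$, produces $\me c^{+}_{\overline{\mu},g}(m)=\sum_{k\geq 0}\overline{g}(m+k)b_{-\overline{\mu}}(k)$. The two alternative representations in (\ref{spectr A_f_st.n_d_fact}) follow by inserting $g(k)=\sum_{m}\phi(m)\phi^{*}(k+m)$ from (\ref{fakt3}) to recognize the operators $\widetilde{\Phi}$ and $\widetilde{\Phi}^{+}$.

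For the mean square error, I would start from (\ref{poh A_f_st.n_d}). The factorization in Lemma \ref{lema_fact_2}(b) turns $\langle \me S_{\overline{\mu}}\me a_{\overline{\mu}},\me P_{\overline{\mu}}^{-1}\me S_{\overline{\mu}}\me a_{\overline{\mu}}\rangle$ into $\|(\Theta_{\overline{\mu}})^{\top}\me S_{\overline{\mu}}\me a_{\overline{\mu}}\|^{2}$, which by Lemma \ref{lema_fact_3} and the same splitting procedure equals $\|\overline{\psi}_{\overline{\mu}}(\me C^{-}_{\overline{\mu},g}+\me C^{+}_{\overline{\mu},g})\|^{2}$. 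For $\langle \me Q\me a,\me a\rangle$ I would use the algebraic identity $f(f+|\beta^{(d)}|^{2}g)^{-1}g=g-|\beta^{(d)}|^{2}g(f+|\beta^{(d)}|^{2}g)^{-1}g$ applied to the integrand in $Q(k)$; by Parseval and factorization (\ref{fakt3}) the first summand contributes $\|\widetilde{\Phi}\me a\|^{2}$, while the second summand precisely cancels the positive term $\|\overline{\psi}_{\overline{\mu}}(\me C^{-}_{\overline{\mu},g}+\me C^{+}_{\overline{\mu},g})\|^{2}$ when added to $\langle \me S_{\overline{\mu}}\me a_{\overline{\mu}},\me P_{\overline{\mu}}^{-1}\me S_{\overline{\mu}}\me a_{\overline{\mu}}\rangle$ with the opposite sign that (\ref{poh A_f_st.n_d_fact}) shows; the sign pattern then follows from the identity above.

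The main obstacle I expect is the bookkeeping: tracking transposes versus conjugates (exploiting Remark \ref{remark_density_adjoint}), and correctly splitting the convolution index range in Lemma \ref{lema_fact_3} so that the $\me c^{-}$ and $\me c^{+}$ pieces emerge with the $\vec a_{-\overline{\mu}}$ and $\vec b_{-\overline{\mu}}$ vectors, respectively. The second delicate point is verifying the sign pattern in (\ref{poh A_f_st.n_d_fact}): one must check that the $gp^{-1}g$ part extracted from $fp^{-1}g$ matches, up to the factors $|\chi_{\overline{\mu}}^{(d)}|^{2}/|\beta^{(d)}|^{2}$ absorbed into $\Psi_{\overline{\mu}}$, the quadratic form $\|\overline{\psi}_{\overline{\mu}}(\me C^{-}_{\overline{\mu},g}+\me C^{+}_{\overline{\mu},g})\|^{2}$ already appearing from the $\langle \me S_{\overline{\mu}}\me a_{\overline{\mu}},\me P_{\overline{\mu}}^{-1}\me S_{\overline{\mu}}\me a_{\overline{\mu}}\rangle$ term, so that the two contributions combine into a single $-\|\overline{\psi}_{\overline{\mu}}(\me C^{-}_{\overline{\mu},g}+\me C^{+}_{\overline{\mu},g})\|^{2}$ correction to $\|\widetilde{\Phi}\me a\|^{2}$.
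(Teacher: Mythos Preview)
Your treatment of the spectral characteristic is essentially the paper's own argument: both start from (\ref{spectr A_f_st.n_d}), insert factorization (\ref{fakt2}) to peel off the outer factor $\Psi_{\overline{\mu}}^{\top}(e^{-i\lambda})$, use Lemma~\ref{lema_fact_2}(b) and Lemma~\ref{lema_fact_3} on the $C_{\overline{\mu}}$ piece, expand the $A^{\top}g$ piece as a bilateral series in the $Z_{\overline{\mu}}(j)$, subtract so that only nonpositive frequencies survive, and finally split the sum over $j\ge -n(\gamma)$ into the $\me a_{-\overline{\mu}}$ and $\me b_{-\overline{\mu}}$ parts.

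For the mean square error, however, your route diverges from the paper and contains a genuine gap. The paper does \emph{not} manipulate (\ref{poh A_f_st.n_d}); instead it uses $\Delta(f,g;\widehat{A}\vec\xi)=\mt E|A\vec\eta-\widehat{A}\vec\eta|^2$, inserts the already-factored $\vec h_{\overline{\mu}}$ into the four resulting spectral integrals, and reads off $\|\widetilde{\Phi}\me a\|^2-\|\overline{\psi}_{\overline{\mu}}(\me C^{-}_{\overline{\mu},g}+\me C^{+}_{\overline{\mu},g})\|^2$ directly. Your claim that Lemma~\ref{lema_fact_3} gives $\|\Theta_{\overline{\mu}}^{\top}\me S_{\overline{\mu}}\me a_{\overline{\mu}}\|^{2}=\|\overline{\psi}_{\overline{\mu}}(\me C^{-}_{\overline{\mu},g}+\me C^{+}_{\overline{\mu},g})\|^{2}$ is false as stated: writing $W(m)=\sum_{j\ge -n(\gamma)}Z_{\overline{\mu}}(m+j)a_{-\overline{\mu}}(j)$, Lemma~\ref{lema_fact_3} yields $\widetilde e_{\overline{\mu}}(m)=W(m+1)$, so $\|\Theta_{\overline{\mu}}^{\top}\me S_{\overline{\mu}}\me a_{\overline{\mu}}\|^{2}=\sum_{m\ge1}|W(m)|^2$, whereas $(\overline{\psi}_{\overline{\mu}}(\me C^{-}+\me C^{+}))_m=W(-m)$, so $\|\overline{\psi}_{\overline{\mu}}(\me C^{-}+\me C^{+})\|^2=\sum_{m\le0}|W(m)|^2$. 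These are the two \emph{complementary} halves of the bilateral norm $\|W\|_{\ell_2(\mathbb Z)}^2$, not equal to each other. Your approach can be repaired once you observe that the $|\beta^{(d)}|^2 g p^{-1}g$ contribution to $\langle\me Q\me a,\me a\rangle$ equals the full $\|W\|^2$ (the factor $|\chi_{\overline{\mu}}^{(d)}|^2$ converts $\me a$ to $\me a_{-\overline{\mu}}$ and $\overline{\Psi_{\overline{\mu}}g}$ has Fourier coefficients $Z_{\overline{\mu}}(j)$), so that
\[
\langle\me S_{\overline{\mu}}\me a_{\overline{\mu}},\me P_{\overline{\mu}}^{-1}\me S_{\overline{\mu}}\me a_{\overline{\mu}}\rangle+\langle\me Q\me a,\me a\rangle
=\sum_{m\ge1}|W(m)|^2+\|\widetilde{\Phi}\me a\|^2-\|W\|^2
=\|\widetilde{\Phi}\me a\|^2-\sum_{m\le0}|W(m)|^2,
\]
which is (\ref{poh A_f_st.n_d_fact}). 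The ``cancellation'' you anticipate is therefore not of one copy against another, but of the \emph{positive-frequency} half of $\|W\|^2$ against the full $\|W\|^2$, leaving the nonpositive half with a minus sign.
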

\begin{proof} See Appendix.
\end{proof}

\begin{remark}
The following factorizations  hold true:
\[ \me G^{-}=\widetilde{\Phi}^{*}\widetilde{\Phi}, \quad \me G^{+}=\widetilde{\Phi}^{*}\widetilde{\Phi}^{+}.\]
\end{remark}

 The filtering problem  for the functional  $A_N\vec\xi$ is solved directly by Theorem \ref{thm3_f_st.n_d_fact}  by putting $\vec a(k)=\vec 0$ for
$k>N$. To solve the filtering problem for the $p$th coordinate of the single vector $\vec\xi (-N)$, we put $\vec a(N)=\vec\delta_p$, $\vec a(k)=\vec 0$ for $k\neq N$.

The following corollaries take place.

\begin{corollary}\label{nas_A_N_f_st.n_d_fact}
A solution $\widehat{A}_N\vec\xi$ to the filtering problem for the linear functional $A_N\vec{\xi}$ of the values of a vector-valued stochastic sequence $\vec{\xi}(m)$  with
stationary  GM increments under condition  (\ref{umova na a_f_st.n_d})  is  calculated by the formula
\be \label{otsinka A_N_f_st.n_d_fact}
 \widehat{A}_N\vec\xi=\sum_{k=0}^N(\vec a(k))^{\top}(\vec\xi(-k)+\vec\eta(-k))-\ip
 (\vec h_{\overline{\mu},N}(\lambda))^{\top}dZ_{\xi^{(n)}+\eta^{(n)}}(\lambda).
\ee
The spectral characteristic
$\vec h_{\overline{\mu},N}(\lambda)$ and the  value of the mean square error $\Delta(f,g;\widehat{A}_N\vec\xi)$ of the optimal estimate $\widehat{A}_N\vec\xi$ are calculated by   formulas \eqref{spectr A_f_st.n_d_fact} and \eqref{poh A_f_st.n_d_fact} for the vectors $\me a$, $\me a_{-\mu}$, $\me b_{-\mu}$ calculated as
 \[\me a_N=((\vec a(0))^{\top},(\vec a(1))^{\top},(\vec a(2))^{\top},\ldots,(\vec a(N))^{\top},0,\ldots)^{\top},
\]
\[
\me a_{-\mu,N}=((\vec a_{-\mu,N}(0))^{\top},(\vec a_{-\mu,N}(1))^{\top},...,(\vec a_{-\mu,N}(N))^{\top},0,\ldots)^{\top},
\]
\[
\me b_{-\mu,N}=(0,(\vec a_{-\mu,N}(-1))^{\top},(\vec a_{-\mu,N}(-2))^{\top},...,(\vec a_{-\mu,N}(-n(\gamma)))^{\top},0 \ldots)^{\top}
\]
where
\begin{eqnarray}
 \vec a_{-\overline{\mu},N}(m)=\sum_{l=\max\ld\{m,0\rd\}}^{\min\{m+n(\gamma),N\}}e_{\gamma}(l-m)\vec a(l),\quad  -n(\gamma)\leq m\leq N. \label{coeff a_N_mu_f_st.n_d}
\end{eqnarray}
\end{corollary}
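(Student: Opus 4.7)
The plan is to derive Corollary \ref{nas_A_N_f_st.n_d_fact} as a direct specialization of Theorem \ref{thm3_f_st.n_d_fact}. The functional $A_N\vec\xi=\sum_{k=0}^N(\vec a(k))^{\top}\vec\xi(-k)$ coincides with $A\vec\xi=\sum_{k=0}^{\infty}(\vec a(k))^{\top}\vec\xi(-k)$ once we extend the given coefficients by setting $\vec a(k)=\vec 0$ for $k>N$. Under this convention the summability hypothesis \eqref{umova na a_f_st.n_d} is trivially satisfied, since the sequence $\{\vec a(k)\}$ has only finitely many nonzero entries, so all hypotheses of Theorem \ref{thm3_f_st.n_d_fact} hold and the general estimate \eqref{otsinka A_f_st.n_d} becomes \eqref{otsinka A_N_f_st.n_d_fact}.

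Next I would track how the auxiliary coefficients propagate through the truncation. Substituting $\vec a(l)=\vec 0$ for $l>N$ into the defining formula \eqref{coeff a_mu_f_st.n_d} for $\vec a_{-\overline{\mu}}(m)$ cuts off the summation at $l=N$, producing
\[
\vec a_{-\overline{\mu},N}(m)=\sum_{l=\max\{m,0\}}^{\min\{m+n(\gamma),N\}}e_{\gamma}(l-m)\vec a(l),
\]
which is exactly \eqref{coeff a_N_mu_f_st.n_d}; moreover $\vec a_{-\overline{\mu},N}(m)=\vec 0$ whenever $\max\{m,0\}>N$, so the resulting sequence is compactly supported in $\{-n(\gamma),\ldots,N\}$. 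Consequently the infinite-dimensional vectors $\me a$, $\me a_{-\mu}$, $\me b_{-\mu}$ of Theorem \ref{thm3_f_st.n_d_fact} degenerate into the truncated vectors $\me a_N$, $\me a_{-\mu,N}$, $\me b_{-\mu,N}$ announced in the statement, where the nonnegative indices $0\leq m\leq N$ of $\vec a_{-\overline{\mu},N}$ populate $\me a_{-\mu,N}$ and the negative indices $-n(\gamma)\leq m\leq -1$ (shifted by one leading zero) populate $\me b_{-\mu,N}$.

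Finally, I would substitute these vectors into \eqref{spectr A_f_st.n_d_fact} and \eqref{poh A_f_st.n_d_fact}. Because $\me a_N$, $\me a_{-\mu,N}$, $\me b_{-\mu,N}$ all have compact support, the operator actions $\widetilde{\Phi}\me a$, $\me G^{-}\me a_{-\mu}$, $\me G^{+}\me b_{-\mu}$ reduce to finite sums, and the coefficients $\me c^{\pm}_{\overline{\mu},g}$ as well as the spectral characteristic $\vec h_{\overline{\mu},N}(\lambda)$ and the error $\Delta(f,g;\widehat{A}_N\vec\xi)$ are obtained from the formulas of Theorem \ref{thm3_f_st.n_d_fact} by inspection.

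The main obstacle is purely bookkeeping: one must verify that the support $\{-n(\gamma),\ldots,N\}$ of $\vec a_{-\overline{\mu},N}$ is partitioned correctly between the two formal vectors $\me a_{-\mu,N}$ (nonnegative indices) and $\me b_{-\mu,N}$ (strictly negative indices, embedded with the leading zero so as to match the operator $\me G^{+}$), and that the truncated upper limit $\min\{m+n(\gamma),N\}$ in \eqref{coeff a_N_mu_f_st.n_d} is compatible with the identification $\vec a_{\overline{\mu}}(k)=\vec a_{-\overline{\mu}}(k-n(\gamma))$ used inside $\me a_{\overline{\mu}}$. Once these index ranges are checked, no further computation is required and the corollary follows.
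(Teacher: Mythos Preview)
Your proposal is correct and matches the paper's own approach exactly: the paper states that the filtering problem for $A_N\vec\xi$ ``is solved directly by Theorem~\ref{thm3_f_st.n_d_fact} by putting $\vec a(k)=\vec 0$ for $k>N$,'' which is precisely the specialization you carry out, with the additional bookkeeping you provide making explicit what the paper leaves to the reader.
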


\begin{corollary} \label{nas xi_f_st.n_d}
The optimal linear estimate $\widehat{\xi}_p(-N)$ of an unobserved value
$ \xi_p(-N)$, $N\geq0$, of the stochastic vector sequence $\vec\xi(m)$ with GM stationary increments based on observations of the sequence $\vec\xi(m)+\vec\eta(m)$ at points $m=0,-1,-2,\ldots$, where the noise sequence $\vec\eta(m)$ is uncorrelated with $\vec\xi(m)$, is calculated by the formula
\be\label{otsinka A_p_f_st.n_d_fact}
 \widehat{\xi}_p(-N)=(\xi_p(-N)+\eta_p(-N))-\ip
 (\vec h_{\overline{\mu},N,p}(\lambda))^{\top}dZ_{\xi^{(n)}+\eta^{(n)}}(\lambda).\ee
Put
 \begin{eqnarray*}
 \vec a_{-\overline{\mu},N,p}(m)=e_{\gamma}(N-m)\vec \delta_p,\quad  N-n(\gamma)\leq m\leq N.
 \end{eqnarray*}
If $N<n(\gamma)$, the spectral characteristic
$\vec h_{\overline{\mu},N,p}(\lambda)$ and the  value of the mean square error $\Delta(f,g;\widehat{\xi}_p(-N))$ of the optimal estimate $\widehat{\xi}_p(-N)$ are calculated by   formulas \eqref{spectr A_f_st.n_d_fact} and \eqref{poh A_f_st.n_d_fact} for the vectors $\me a$, $\me a_{-\mu}$, $\me b_{-\mu}$ calculated as
 \[\me a_{N,p}=(0,0,\ldots,(\me \delta_p)^{\top},0,\ldots)^{\top},
\]
\[
\me a_{-\mu,N,p}=((\vec a_{-\mu,N,p}(0))^{\top},(\vec a_{-\mu,N,p}(1))^{\top},...,(\vec a_{-\mu,N,p}(N))^{\top},0,\ldots)^{\top},
\]
\[
\me b_{-\mu,N,p}=(0,(\vec a_{-\mu,N,p}(-1))^{\top},(\vec a_{-\mu,N,p}(-2))^{\top},...,(\vec a_{-\mu,N,p}(N-n(\gamma)))^{\top},0 \ldots)^{\top}.
\]
If $N\geq n(\gamma)$, the spectral characteristic
$\vec h_{\overline{\mu},N,p}(\lambda)$ and the  value of the mean square error $\Delta(f,g;\widehat{\xi}_p(-N))$ of the optimal estimate $\widehat{\xi}_p(-N)$ are calculated by   formulas
\begin{eqnarray}
\vec h_{\overline{\mu},N,p}(\lambda)=\frac{\chi_{\overline{\mu}}^{(d)}(e^{-i\lambda})}{\beta^{(d)}(i\lambda)}
\ld(\sum_{k=0}^{\infty}\psi^{\top}_{\overline{\mu}}(k)e^{-i\lambda k}\rd)
\sum_{m=0}^{\infty}( (\widetilde{\Psi}_{\overline{\mu}})^*\me G^{-}\me a_{-\mu,N,p} )_m e^{-i\lambda m}
\label{spectr A_p_f_st.n_d_2_fact}
\end{eqnarray}
and
\begin{eqnarray}
\notag \Delta\ld(f,g;\widehat{\xi}_p(-N)\rd)=\ld\langle\overline{g}(0)\vec\delta_p,\vec\delta_p\rd\rangle-\|(\widetilde{\Psi}_{\overline{\mu}})^*\me G^{-}\me a_{-\mu,N,p}\|^2,
\label{poh A_p_f_st.n_d_2_fact}
\end{eqnarray}
where
\[
\me a_{-\mu,N,p}=(0,\ldots,0,(\vec a_{-\mu,N,p}(N-n(\gamma)))^{\top},...,(\vec a_{-\mu,N,p}(N))^{\top},0,\ldots)^{\top}.
\]
\end{corollary}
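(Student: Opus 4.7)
The plan is to obtain Corollary \ref{nas xi_f_st.n_d} as a direct specialization of Corollary \ref{nas_A_N_f_st.n_d_fact} with the coefficient choice $\vec a(N)=\vec\delta_p$, $\vec a(k)=\vec 0$ for $k\neq N$, which makes $A_N\vec\xi=\xi_p(-N)$. First, I would substitute into formula \eqref{coeff a_N_mu_f_st.n_d}: the sum over $l$ collapses to the single term $l=N$ whenever $\max\{m,0\}\leq N\leq \min\{m+n(\gamma),N\}$, which simplifies to $N-n(\gamma)\leq m\leq N$. In that range $\vec a_{-\overline{\mu},N,p}(m)=e_\gamma(N-m)\vec\delta_p$, reproducing the expression in the statement; outside it the coefficient vanishes.

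Next I would split on the sign of $N-n(\gamma)$. If $N<n(\gamma)$, the support of $\vec a_{-\overline{\mu},N,p}(m)$ contains negative indices, which are stored in $\me b_{-\mu,N,p}$ via the convention $\vec b_{-\mu}(k)=\vec a_{-\mu}(-k)$; the conclusion is then Corollary \ref{nas_A_N_f_st.n_d_fact} applied verbatim to these specialized vectors. If $N\geq n(\gamma)$, the support lies in $\{0,1,\ldots,N\}$, so $\me b_{-\mu,N,p}=\me 0$. Consequently $\me c^{+}_{\overline{\mu},g}(m)=(\widetilde{\Phi}^{*}\widetilde{\Phi}^{+}\me b_{-\mu,N,p})_m=0$ for every $m$, and the inner $\me G^{+}\me b_{-\mu,N,p}$ summand of the spectral characteristic \eqref{spectr A_f_st.n_d_fact} drops out, producing exactly formula \eqref{spectr A_p_f_st.n_d_2_fact}.

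For the mean square error in the case $N\geq n(\gamma)$, I would start from \eqref{poh A_f_st.n_d_fact}, which, after dropping the $\me C^{+}_{\overline{\mu},g}$ contribution, reduces to $\Delta(f,g;\widehat{\xi}_p(-N))=\|\widetilde{\Phi}\me a_{N,p}\|^2-\|(\widetilde{\Psi}_{\overline{\mu}})^{*}\me G^{-}\me a_{-\mu,N,p}\|^2$. The key identification is $\|\widetilde{\Phi}\me a_{N,p}\|^2=\langle\overline{g}(0)\vec\delta_p,\vec\delta_p\rangle$. Since $\me a_{N,p}$ has its only nonzero block $\vec\delta_p$ at position $N$, the lower-triangular structure of $\widetilde{\Phi}$ gives $(\widetilde{\Phi}\me a_{N,p})_k=\phi^{\top}(k-N)\vec\delta_p$ for $k\geq N$ and $0$ otherwise, so $\|\widetilde{\Phi}\me a_{N,p}\|^2=\sum_{l\geq 0}\|\phi^{\top}(l)\vec\delta_p\|^2=\sum_{l\geq 0}\sum_j|\phi_{pj}(l)|^2=(g(0))_{pp}$, using the formula $g(0)=\sum_{m\geq 0}\phi(m)\phi^{*}(m)$ supplied by Lemma \ref{lema_fact_3}. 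The diagonal entries of $g(0)$ are real, so by Remark \ref{remark_density_adjoint} this scalar coincides with $\langle\overline{g}(0)\vec\delta_p,\vec\delta_p\rangle$.

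The main obstacle is the bookkeeping with conjugates and transposes needed to identify $\|\widetilde{\Phi}\me a_{N,p}\|^2$ with $\langle\overline{g}(0)\vec\delta_p,\vec\delta_p\rangle$; the rest amounts to direct substitution of the degenerate coefficient vector into the formulas of Corollary \ref{nas_A_N_f_st.n_d_fact}.
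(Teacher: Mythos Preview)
Your proposal is correct and follows exactly the route indicated in the paper: the text preceding the corollary explicitly states that one puts $\vec a(N)=\vec\delta_p$, $\vec a(k)=\vec 0$ for $k\neq N$ and reads off the result from Corollary~\ref{nas_A_N_f_st.n_d_fact}. The paper gives no further details, so your computation of $\vec a_{-\overline{\mu},N,p}(m)$ from \eqref{coeff a_N_mu_f_st.n_d}, the case split on $N\gtrless n(\gamma)$ to decide whether $\me b_{-\mu,N,p}$ vanishes, and the identification $\|\widetilde{\Phi}\me a_{N,p}\|^2=(g(0))_{pp}=\langle\overline{g}(0)\vec\delta_p,\vec\delta_p\rangle$ via $g(0)=\sum_{m\geq0}\phi(m)\phi^{*}(m)$ are precisely the bookkeeping the paper leaves implicit.
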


\subsection{Filtering of stochastic sequences with periodically stationary GM increment}

Consider the filtering problem for the functionals
\begin{equation}
A{\xi}=\sum_{k=0}^{\infty}{a}^{(\xi)}(k)\xi(-k), \quad
A_{M}{\xi}=\sum_{k=0}^{N}{a}^{(\xi)}(k)\xi(-k)
\end{equation}
which depend on unobserved values of a stochastic sequence $\xi(m)$ with periodically stationary
GM increments. Estimates are based on observations of the sequence $\xi(m)+\eta(m)$ at points $m=0,-1,-2,\ldots$, where the periodically stationary noise sequence $ \eta(m)$ is uncorrelated with $ \xi(m)$.

The functional $A{\xi}$ can be represented in the form
\begin{eqnarray}
\nonumber
A{\xi}& = &\sum_{k=0}^{\infty}{a}^{(\xi)}(k)\xi(-k)=\sum_{m=0}^{\infty}\sum_{p=1}^{T}
{a}^{(\xi)}(mT+p-1)\xi(-mT-p+1)
\\\nonumber
& = & \sum_{m=0}^{\infty}\sum_{p=1}^{T}a_p(m)\xi_p(-m)=\sum_{m=0}^{\infty}(\vec{a}(m))^{\top}\vec{\xi}(-m)
=A\vec{\xi},
\end{eqnarray}
where
\be \label{zeta1}
\vec{\xi}(m)=({\xi}_1(m),{\xi}_2(m),\dots,{\xi}_T(m))^{\top},\,
 {\xi}_p(m)=\xi(mT+p-1);\,p=1,2,\dots,T;
\ee
and
\be \label{azeta}
 \vec{a}(m) =({a}_1(m),{a}_2(m),\dots,{a}_T(m))^{\top},\,
 {a}_p(m)=a^{(\xi)}(mT+p-1);\,p=1,2,\dots,T.
\ee

In the same way, the functional $A{\eta}$ is represented as
\[
A{\eta}=\sum_{k=0}^{\infty}{a}^{(\xi)}(k)\eta(-k)=\sum_{m=0}^{\infty}(\vec{a}(m))^{\top}\vec{\eta}(-m)
=A\vec{\eta},
\]
where
\be \label{zeta2}
\vec{\eta}(m)=({\eta}_1(m),{\eta}_2(m),\dots,{\eta}_T(m))^{\top},\,
 {\eta}_p(m)=\eta(mT+p-1);\,p=1,2,\dots,T.
\ee

Making use of the introduced notations and statements of Theorem \ref{thm_est_A} we can claim that the following theorem holds true.

\begin{theorem}
\label{thm_est_Azeta}
Let a stochastic sequence ${\xi}(m)$ with periodically stationary GM increments and a stochastic periodically stationary sequence ${\eta}(m)$   generate by formulas \eqref{zeta1} and \eqref{zeta2}
  vector-valued stochastic sequences $\vec{\xi}(m) $ and $\vec{\eta}(m) $ with  the spectral densities matrices $f(\lambda)=\{f_{ij}(\lambda)\}_{i,j=1}^{T}$ and $g(\lambda)=\{g_{ij}(\lambda)\}_{i,j=1}^{T}$  admitting  canonical factorizations (\ref{fakt1}) -- (\ref{fakt2}). A solution $\widehat{A} \xi$ to the filtering problem for the functional $A\xi=A\vec \xi$ under conditions  (\ref{umova na a_f_st.n_d})  is  calculated by formula
 (\ref{otsinka A_f_st.n_d}) for the coefficients $\vec a(m)$, $m\geq0$, defined in \eqref{azeta}.
The spectral characteristic
$ \vec h_{\overline{\mu}}(\lambda)=\{h_{p}(\lambda)\}_{p=1}^{T}$ and the value of the mean square error $\Delta(f;\widehat{A}\xi)$ of the   estimate $\widehat{A}\xi$ are calculated by formulas
(\ref{spectr A_f_st.n_d_fact}) and (\ref{poh A_f_st.n_d_fact}) respectively.
\end{theorem}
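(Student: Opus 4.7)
The plan is to reduce the scalar filtering problem for the periodically stationary GM-increment sequence $\xi(m)$ to the already-solved vector-valued filtering problem of Theorem \ref{thm3_f_st.n_d_fact}. The key device is the blocking transformation \eqref{PerehidXi}, which sends $\xi(m)$ to the $T$-dimensional sequence $\vec{\xi}(m)$ whose GM increments are stationary (in the wide sense) in the sense of Definition \ref{oznStPryrostu}. The analogous transformation on $\eta(m)$ produces a stationary vector sequence $\vec{\eta}(m)$ that remains uncorrelated with $\vec{\xi}(m)$.

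First I would verify the functional identity. By splitting the summation index $k = mT + p - 1$ with $p\in\{1,\dots,T\}$ and $m\geq 0$, the scalar sums rearrange exactly as displayed above the theorem statement:
\[
A\xi=\sum_{k=0}^{\infty}a^{(\xi)}(k)\xi(-k)=\sum_{m=0}^{\infty}\sum_{p=1}^{T}a_p(m)\xi_p(-m)=\sum_{m=0}^{\infty}(\vec a(m))^{\top}\vec\xi(-m)=A\vec\xi,
\]
and the same holds for $A\eta$. Hence the filtering problem for $A\xi$ from observations $\xi(k)+\eta(k)$, $k\leq 0$, is identified with the vector filtering problem for $A\vec\xi$ from observations $\vec\zeta(k)=\vec\xi(k)+\vec\eta(k)$, $k\leq 0$.

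Next I would verify that the hypotheses of Theorem \ref{thm3_f_st.n_d_fact} are inherited. The summability condition \eqref{umova na a_f_st.n_d} on the blocked coefficients $\vec a(m)$ defined in \eqref{azeta} follows from the corresponding scalar summability assumed on $a^{(\xi)}(k)$, since $\|\vec a(m)\|^2=\sum_{p=1}^T|a^{(\xi)}(mT+p-1)|^2$ and $(m+1)\leq (k+1)$ whenever $k=mT+p-1$. The spectral densities $f(\lambda)$ and $g(\lambda)$ of $\vec\xi(m)$ and $\vec\eta(m)$ are, by assumption, precisely those appearing in factorizations \eqref{fakt1}--\eqref{fakt2}, so the factorization hypotheses carry over verbatim.

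With these identifications in place, I would simply invoke Theorem \ref{thm3_f_st.n_d_fact} applied to $\vec\xi,\vec\eta$ and the coefficients \eqref{azeta}: the optimal estimate is given by \eqref{otsinka A_f_st.n_d}, and its spectral characteristic $\vec h_{\overline{\mu}}(\lambda)=\{h_p(\lambda)\}_{p=1}^T$ and mean-square error are read off from \eqref{spectr A_f_st.n_d_fact} and \eqref{poh A_f_st.n_d_fact}. The only technical point that deserves care, and which I expect to be the main obstacle, is justifying that the minimality/summability condition needed implicitly in the vector result is genuinely implied by the scalar hypotheses on $a^{(\xi)}$; this is a routine but somewhat notational check that amounts to showing $\sum_{m=0}^{\infty}(m+1)\|\vec a(m)\|^2<\infty$ and $\sum_{m=0}^{\infty}\|\vec a(m)\|<\infty$ from the analogous scalar bounds, which follows by grouping terms in blocks of length $T$ and using Cauchy--Schwarz. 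Once this is in hand, the theorem is an immediate corollary of Theorem \ref{thm3_f_st.n_d_fact}.
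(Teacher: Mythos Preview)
Your proposal is correct and follows essentially the same route as the paper: the authors do not give a separate proof for this theorem but simply state that it holds by combining the blocking representation $A\xi=A\vec\xi$ displayed immediately before the theorem with the vector-valued result (they cite Theorem~\ref{thm_est_A}, though the formulas referenced in the statement are those of Theorem~\ref{thm3_f_st.n_d_fact}, as you correctly identify). Your added verification that condition~\eqref{umova na a_f_st.n_d} for the blocked coefficients $\vec a(m)$ follows from the scalar assumptions is more than the paper spells out, but the underlying reduction is the same.
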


The functional $A_M{\xi}$ can be represented in the form
\begin{eqnarray}
\nonumber
A_M{\xi}& = &\sum_{k=0}^{M}{a}^{(\xi)}(k)\zeta(-k)=\sum_{m=0}^{N}\sum_{p=1}^{T}
{a}^{(\xi)}(mT+p-1)\xi(-mT-p+1)
\\\nonumber
& = &\sum_{m=0}^{N}\sum_{p=1}^{T}a_p(m)\xi_p(-m)=\sum_{m=0}^{N}(\vec{a}(m))^{\top}\vec{\xi}(-m)=A_N\vec{\xi},
\end{eqnarray}
where $N=[\frac{M}{T}]$, the sequence   $\vec{\xi}(m) $ is determined by formula \eqref{zeta1},
\begin{eqnarray}
\nonumber
(\vec{a}(m))^{\top}& = &({a}_1(m),{a}_2(m),\dots,{a}_T(m))^{\top},
\\\nonumber
 {a}_p(m)& = &a^{\zeta}(mT+p-1);\,0\leq m\leq N; 1\leq p\leq T;\,mT+p-1\leq M;
\\  {a}_p(N)& = &0;\quad
M+1\leq NT+p-1\leq (N+1)T-1;1\leq p\leq T. \label{aNzeta}
\end{eqnarray}

An estimate of a single unobserved value
$\xi(-M)$, $M\geq0$ of a stochastic sequence ${\xi}(m)$ with periodically stationary GM increments
is obtained by making use of the notations
$\xi(-M)=\xi_p(-N)=(\vec \delta_p)^{\top}\vec\xi(N)$, $N=[\frac{M}{T}]$, $p=M+1-NT$. We can conclude that the following corollaries hold true.

\begin{corollary}
\label{nas_est_A_Nzeta}
Let a stochastic sequence ${\xi}(m)$ with periodically stationary GM increments and a stochastic periodically stationary sequence ${\eta}(m)$   generate by formulas \eqref{zeta1} and \eqref{zeta2}
 vector-valued stochastic sequences $\vec{\xi}(m) $ and $\vec{\eta}(m) $. A solution $\widehat{A}_M\xi$ to the filtering problem for  the functional $A_M\xi=A_N\vec{\xi}$  under condition (\ref{umova11_f_st.n_d}) is  calculated by formula
 (\ref{otsinka A_N_f_st.n_d_fact}) for the coefficients $\vec a(m)$, $0\leq m \leq N$, defined in \eqref{aNzeta}.
The spectral characteristic and the value of the mean square error of the estimate $\widehat{A}_M\xi$
are calculated by formulas  \eqref{spectr A_f_st.n_d_fact} and \eqref{poh A_f_st.n_d_fact} respectively.
\end{corollary}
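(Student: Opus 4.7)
The proof reduces immediately, by the blocking transformation, to Corollary \ref{nas_A_N_f_st.n_d_fact} applied in the vector-valued setting already treated in Subsection \ref{classical_filtering_vector}. The plan is therefore to verify that the reduction is complete and that the hypotheses of Corollary \ref{nas_A_N_f_st.n_d_fact} are satisfied.

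First I would carry out the algebraic rearrangement that is sketched in the paragraph preceding the statement: write
\[
A_M\xi=\sum_{k=0}^{M}a^{(\xi)}(k)\xi(-k)=\sum_{m=0}^{N}\sum_{p=1}^{T}a^{(\xi)}(mT+p-1)\xi(-mT-p+1)=\sum_{m=0}^{N}(\vec a(m))^{\top}\vec\xi(-m)=A_N\vec\xi,
\]
with $N=[M/T]$ and $\vec a(m)$ defined by \eqref{aNzeta} (padding with zeros for indices $NT+p-1>M$). The observations $\xi(k)+\eta(k)$, $k\leq 0$, transform into the vector-valued observations $\vec\xi(m)+\vec\eta(m)$, $m\leq 0$, so the filtering problem for $A_M\xi$ coincides with the filtering problem for the vector functional $A_N\vec\xi$. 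In particular, $\widehat{A}_M\xi=\widehat{A}_N\vec\xi$ and the mean square errors agree.

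Next I would check the hypothesis \eqref{umova na a_f_st.n_d} for the resulting vector coefficients: since $\vec a(m)=\vec 0$ for $m>N$, only finitely many terms appear in both $\sum_{k\ge 0}\|\vec a(k)\|$ and $\sum_{k\ge 0}(k+1)\|\vec a(k)\|^{2}$, so both series converge trivially. The minimality condition \eqref{umova11_f_st.n_d} is assumed in the statement, and the canonical factorizations \eqref{fakt1}--\eqref{fakt2} of $f(\lambda)$ and $g(\lambda)$ are inherited from Theorem \ref{thm_est_Azeta}'s setup for the vector-valued pair $(\vec\xi,\vec\eta)$.

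With these verifications in place, Corollary \ref{nas_A_N_f_st.n_d_fact} applies directly and yields representation \eqref{otsinka A_N_f_st.n_d_fact}, the spectral characteristic \eqref{spectr A_f_st.n_d_fact} and the mean square error \eqref{poh A_f_st.n_d_fact}, where the vectors $\me a_N$, $\me a_{-\mu,N}$, $\me b_{-\mu,N}$ are formed from the coefficients \eqref{aNzeta} and \eqref{coeff a_N_mu_f_st.n_d}. There is essentially no analytical obstacle here; the only point requiring care is the bookkeeping in \eqref{aNzeta}, namely to ensure that the zero-padding of $\vec a(N)$ when $M+1\le NT+p-1\le (N+1)T-1$ is consistent with how $A_M\xi$ is interpreted as a sum over the full block of length $T$. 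Once that bookkeeping is confirmed, the corollary follows as a direct specialisation of Corollary \ref{nas_A_N_f_st.n_d_fact}, and no further computation is needed.
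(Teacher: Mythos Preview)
Your proposal is correct and follows exactly the approach the paper takes: the corollary is stated without a separate proof because it is an immediate specialization of Corollary~\ref{nas_A_N_f_st.n_d_fact} via the blocking transformation and the coefficient definition \eqref{aNzeta} displayed just before the statement. Your added verification that \eqref{umova na a_f_st.n_d} holds trivially for the finitely supported $\vec a(m)$ and your remark on the zero-padding bookkeeping are accurate and make explicit what the paper leaves implicit.
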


\begin{corollary}\label{nas zeta_e_d}
Let a stochastic sequence ${\xi}(m)$ with periodically stationary GM increments and a stochastic periodically stationary sequence ${\eta}(m)$   generate by formulas \eqref{zeta1} and \eqref{zeta2}
vector-valued stochastic sequences $\vec{\xi}(m) $ and $\vec{\eta}(m) $. A solution $\widehat{\xi}(-M)$ to the filtering problem for an unobserved value $\xi(-M)=\xi_p(-N)=(\vec \delta_p)^{\top}\vec\xi(-N)$, $N=[\frac{M}{T}]$, $p=M+1-NT$,  under condition (\ref{umova11_f_st.n_d})
 is calculated by formula \eqref{otsinka A_p_f_st.n_d_fact}.
The spectral characteristic and the value of the mean square error of the estimate $\widehat{\xi}(-M)$ are calculated by   formulas \eqref{spectr A_f_st.n_d_fact} and \eqref{poh A_f_st.n_d_fact} or
 \eqref{spectr A_p_f_st.n_d_2_fact} and \eqref{poh A_p_f_st.n_d_2_fact} respectively.
\end{corollary}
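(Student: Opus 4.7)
The plan is to obtain Corollary \ref{nas zeta_e_d} as a direct specialization of Corollary \ref{nas xi_f_st.n_d} via the vector-valued reformulation provided by \eqref{zeta1} and \eqref{zeta2}. The essential observation is that a single scalar value $\xi(-M)$ of a periodically stationary GM increment sequence corresponds to one coordinate of the vector $\vec\xi(-N)$ for an appropriate choice of $N$ and $p$, so the scalar filtering problem becomes an instance of the already-solved vector single-coordinate filtering problem.

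First I would verify the index correspondence. Writing $M = NT + p - 1$ with $N = [M/T]$ and $p = M + 1 - NT \in \{1,\ldots,T\}$, the definition $\xi_q(m) = \xi(mT + q - 1)$ from \eqref{PerehidXi} identifies $\xi(-M)$ with the $p$-th coordinate of $\vec\xi(-N)$, giving $\xi(-M) = (\vec\delta_p)^{\top}\vec\xi(-N)$. Then I would check that the hypotheses of Corollary \ref{nas xi_f_st.n_d} transfer: since $\xi(m)$ has periodically stationary GM increments, the vector $\vec\xi(m)$ built from \eqref{zeta1} has stationary GM increments (as recorded in Section \ref{spectral_ theory} following \eqref{PerehidXi}); the noise $\vec\eta(m)$ obtained from \eqref{zeta2} is stationary and uncorrelated with $\vec\xi(m)$; and the densities $f(\lambda)$ and $g(\lambda)$ admit the canonical factorizations \eqref{fakt1}--\eqref{fakt2} by hypothesis, while condition \eqref{umova11_f_st.n_d} secures non-degeneracy of the minimum mean square error.

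Having secured the reduction, I would invoke Corollary \ref{nas xi_f_st.n_d} with parameters $(N,p)$ to obtain $\widehat{\xi}_p(-N)$ via \eqref{otsinka A_p_f_st.n_d_fact}. The two regimes of the corollary are inherited directly: when $N < n(\gamma)$, the spectral characteristic and mean square error follow from \eqref{spectr A_f_st.n_d_fact} and \eqref{poh A_f_st.n_d_fact} applied to the vectors $\me a_{N,p}$, $\me a_{-\mu,N,p}$, $\me b_{-\mu,N,p}$; when $N \geq n(\gamma)$, the vector $\me b_{-\mu,N,p}$ vanishes and the compact formulas \eqref{spectr A_p_f_st.n_d_2_fact} and \eqref{poh A_p_f_st.n_d_2_fact} apply. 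Translating back through $\widehat{\xi}(-M) = \widehat{\xi}_p(-N)$ yields the claim.

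The proof carries no new analytic content; the main obstacle is purely bookkeeping, namely checking that the index shift $M \mapsto (N,p)$ is faithfully reflected in the coefficient vectors defined in Corollary \ref{nas xi_f_st.n_d}, and in particular that the threshold $N < n(\gamma)$ versus $N \geq n(\gamma)$ is the correct one for determining which of the two formula pairs applies. Once this correspondence is verified, the corollary follows immediately.
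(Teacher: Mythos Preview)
Your proposal is correct and follows exactly the route the paper takes: the paper gives no separate proof of Corollary~\ref{nas zeta_e_d} but simply records the index identification $\xi(-M)=\xi_p(-N)$ with $N=[M/T]$, $p=M+1-NT$ and states that the result follows from Corollary~\ref{nas xi_f_st.n_d}. Your bookkeeping verification of the correspondence and the two regimes $N<n(\gamma)$ versus $N\geq n(\gamma)$ is precisely what is needed to make that reduction explicit.
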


\section{Minimax (robust) method of filtering}\label{minimax_filtering}

Solutions of the problem of  estimating the functionals ${A}\vec\xi$ and ${A}_N\vec\xi$ constructed from unobserved values of the stochastic sequence $\vec{\xi}(m)$ with  stationary  GM increments
$\chi_{\overline{\mu},\overline{s}}^{(d)}(\vec{\xi}(m))$ having the spectral density matrix $f(\lambda)$
based on its observations with stationary  noise
$\vec\xi(m)+\vec\eta(m)$ at points $m=0,-1,-2,\dots$ are  proposed in Theorem \ref{thm3_f_st.n_d_fact} and Corollary \ref{nas_A_N_f_st.n_d_fact}  in the case where the spectral density matrices
$f(\lambda)$ and $g(\lambda)$ of the target sequence  and the noise are exactly known.

In this section, we study the case where the  complete information about the spectral density matrices is not available, while some  sets   of admissible spectral densities $\md D=\md D_f\times\md D_g$ is known.
The minimax approach of estimation of the functionals from  unobserved values of stochastic sequences is considered, which
consists in finding an estimate that minimizes
the maximal values of the mean square errors for all spectral densities
from a class $\md D$ simultaneously. This method will be applied for the concrete classes of spectral densities.

The proceed with the stated problem, we recall the following definitions \cite{Moklyachuk}.

\begin{defn}
For a given class of spectral densities $\mathcal{D}=\md
D_f\times\md D_g$, the spectral densities
$f^0(\lambda)\in\mathcal{D}_f$, $g^0(\lambda)\in\md D_g$
are called the least favourable densities in the class $\mathcal{D}$ for
optimal linear filtering of the functional $A\xi$ if the following relation holds true
\[
 \Delta(f^0,g^0)=\Delta(h(f^0,g^0);f^0,g^0)=
 \max_{(f,g)\in\mathcal{D}_f\times\md
 D_g}\Delta(h(f,g);f,g).\]
\end{defn}

\begin{defn}
For a given class of spectral
densities $\mathcal{D}=\md D_f\times\md D_g$ the spectral
characteristic $\vec h^0(\lambda)$ of the optimal estimate of the functional
$A\xi$ is called minimax (robust) if the following relations hold true
\begin{eqnarray*}
 \vec h^0(\lambda)\in H_{\mathcal{D}}
 &=&\bigcap_{(f,g)\in\mathcal{D}_f\times\md D_g}L_2^{0}(p),
 \\
 \min_{\vec h\in H_{\mathcal{D}}}\max_{(f,g)\in \mathcal{D}_f\times\md D_g}\Delta(\vec h;f,g)
 &=&\max_{(f,g)\in\mathcal{D}_f\times\md D_g}\Delta(\vec h^0;f,g).
 \end{eqnarray*}
\end{defn}

Taking into account the introduced definitions and the relations derived in the previous sections  we can verify that the following lemma holds true.

\begin{lemma} The spectral densities $f^0\in\mathcal{D}_f$,
$g^0\in\mathcal{D}_g$ which admit  canonical factorizations (\ref{dd}), (\ref{fakt1}) and (\ref{fakt3})
are least favourable densities in the class $\mathcal{D}$ for the optimal linear filtering
of the functional $A\vec \xi$ based on observations of the sequence $\vec \xi(m)+\vec \eta(m)$ at points $m=0,-1,-2,\ldots$ if the matrix coefficients
of  canonical factorizations
(\ref{fakt1}) and (\ref{fakt3})
determine a solution to the constrained optimization problem
\begin{eqnarray}
\|\widetilde{\Phi}\me a\|^2-\|\overline{\psi}_{\overline{\mu}}( \me C^{-}_{\overline{\mu},g} + \me C^{+}_{\overline{\mu},g})\|^2\rightarrow\sup, \label{minimax1_f_st.n_d_fact}
\end{eqnarray}
\begin{eqnarray*}
 f(\lambda)&=&\frac{|\beta^{(d)}(i\lambda)|^2}{|\chi_{\overline{\mu}}^{(d)}(e^{-i\lambda})|^2}
\Theta_{\overline{\mu}}(e^{-i\lambda})\Theta_{\overline{\mu}}^*(e^{-i\lambda})
-|\beta^{(d)}(i\lambda)|^2\Phi(e^{-i\lambda})\Phi^*(e^{-i\lambda})\in \mathcal{D}_f,
\\
 g(\lambda)&=&\Phi(e^{-i\lambda})\Phi^*(e^{-i\lambda})\in \mathcal{D}_g.
\end{eqnarray*}
The minimax spectral characteristic $\vec h^0=\vec h_{\overline{\mu}}(f^0,g^0)$ is calculated by formula (\ref{spectr A_f_st.n_d_fact}) if
$\vec h_{\overline{\mu}}(f^0,g^0)\in H_{\mathcal{D}}$.
\end{lemma}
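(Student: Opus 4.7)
The plan is to verify that the stated optimization problem is the correct reformulation of the minimax criterion, using the error representation already furnished by Theorem \ref{thm3_f_st.n_d_fact}. The definition of least favourable densities requires
\[
\Delta(f^0,g^0)=\max_{(f,g)\in\mathcal{D}_f\times\mathcal{D}_g}\Delta(\vec h_{\overline{\mu}}(f,g);f,g),
\]
so for each admissible pair $(f,g)$ one must first substitute the optimal spectral characteristic $\vec h_{\overline{\mu}}(f,g)$ from formula (\ref{spectr A_f_st.n_d_fact}) into $\Delta(\vec h;f,g)$. By Theorem \ref{thm3_f_st.n_d_fact} this inner optimum coincides with (\ref{poh A_f_st.n_d_fact}), so the outer maximum becomes
\[
\max_{(f,g)\in\mathcal{D}}\bigl(\|\widetilde{\Phi}\me a\|^2-\|\overline{\psi}_{\overline{\mu}}( \me C^{-}_{\overline{\mu},g} + \me C^{+}_{\overline{\mu},g})\|^2\bigr),
\]
which is exactly (\ref{minimax1_f_st.n_d_fact}). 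Hence the first step is just to quote the inner-problem solution and pass to the supremum over $(f,g)$.

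Next I would parametrize the admissible set through the canonical factorization coefficients. Because $g$ enters the error through its factor $\Phi$ (via $\widetilde{\Phi}$ and $\me C^{\pm}_{\overline{\mu},g}$) and $f$ enters only through the sum $f+|\beta^{(d)}(i\lambda)|^2 g$ that appears in factorization (\ref{fakt1}), it is natural to treat $\Theta_{\overline{\mu}}$ and $\Phi$ as the free parameters. The admissibility conditions $f\in\mathcal{D}_f$, $g\in\mathcal{D}_g$ are then rewritten as the constraints
\[
\frac{|\beta^{(d)}(i\lambda)|^2}{|\chi_{\overline{\mu}}^{(d)}(e^{-i\lambda})|^2}\Theta_{\overline{\mu}}\Theta_{\overline{\mu}}^*-|\beta^{(d)}(i\lambda)|^2\Phi\Phi^*\in\mathcal{D}_f,\qquad \Phi\Phi^*\in\mathcal{D}_g,
\]
which are precisely those displayed in the lemma. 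This reformulates the functional maximization over densities as a constrained extremal problem in the factorization coefficients, motivating the supremum in (\ref{minimax1_f_st.n_d_fact}).

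Finally, any maximizer $(f^0,g^0)$ of (\ref{minimax1_f_st.n_d_fact}) satisfies the least favourability identity by construction, so the minimax spectral characteristic equals the optimal one associated with these densities; this amounts to a standard saddle-point argument. The expression (\ref{spectr A_f_st.n_d_fact}) then gives $\vec h^0$, provided that it is a valid estimator for every $(f,g)\in\mathcal{D}$, i.e.\ provided $\vec h^0\in H_{\mathcal{D}}=\bigcap_{(f,g)\in\mathcal{D}}L_2^{0}(p)$. Once this compatibility holds, one has the chain
\[
\max_{(f,g)\in\mathcal{D}}\Delta(\vec h^0;f,g)\geq\Delta(\vec h^0;f^0,g^0)=\Delta(f^0,g^0)\geq\max_{(f,g)\in\mathcal{D}}\Delta(\vec h_{\overline{\mu}}(f,g);f,g)\geq\min_{\vec h\in H_{\mathcal{D}}}\max_{(f,g)\in\mathcal{D}}\Delta(\vec h;f,g),
\]
closing the saddle-point inequality in both directions and confirming minimaxity.

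The main obstacle I expect is the $H_{\mathcal{D}}$-membership condition: the optimal characteristic depends on $(f^0,g^0)$ through operators $\widetilde{\Phi}$, $\Psi_{\overline{\mu}}$, $\me G^{\pm}$, so showing that it lies in $L_2^{0}(p)$ simultaneously for every admissible $(f,g)$ is a nontrivial compatibility requirement that must be verified case-by-case for each concrete class $\mathcal{D}$. The routine parts are the reduction of the inner optimum via Theorem \ref{thm3_f_st.n_d_fact} and the restatement of the density constraints in terms of the factorization factors.
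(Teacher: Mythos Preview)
Your proposal is correct and follows essentially the same approach as the paper, which does not give a detailed proof but simply states that the lemma follows from the definitions of least favourable densities together with the error formula (\ref{poh A_f_st.n_d_fact}) of Theorem~\ref{thm3_f_st.n_d_fact}. You have made explicit exactly the reduction the paper leaves implicit: substitute the inner optimum from Theorem~\ref{thm3_f_st.n_d_fact}, reparametrize the admissible densities through the factorization coefficients $\Theta_{\overline{\mu}}$ and $\Phi$, and note that the $H_{\mathcal D}$-membership of $\vec h_{\overline{\mu}}(f^0,g^0)$ is an assumption rather than something to be derived here.
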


\begin{lemma} The spectral density $g^0\in\mathcal{D}_g$ which admits  canonical factorizations (\ref{fakt1}), (\ref{fakt3}) with the known spectral density $f(\lambda)$ is the least favourable in the class $\mathcal{D}_g$ for the optimal linear filtering
of the functional $A\xi$ based on observations of the sequence $\vec \xi(m)+\vec \eta(m)$ at points $m=0,-1,-2,\ldots$ if the matrix coefficients
of the canonical factorizations
\begin{eqnarray*} \label{fakt24_1_lf}
 f(\lambda)+|\beta^{(d)}(i\lambda)|^2g^0(\lambda)&=&\frac{|\beta^{(d)}(i\lambda)|^2}{|\chi_{\overline{\mu}}^{(d)}(e^{-i\lambda})|^2}
\ld(\sum_{k=0}^{\infty}\theta^0_{\overline{\mu}}(k)e^{-i\lambda k}\rd)\ld(\sum_{k=0}^{\infty}\theta^0_{\overline{\mu}}(k)e^{-i\lambda k}\rd)^*,
\\ \label{fakt24_2_lf}
 g^0(\lambda)&=&\ld(\sum_{k=0}^{\infty}\phi^0(k)e^{-i\lambda k}\rd)\ld(\sum_{k=0}^{\infty}\phi^0(k)e^{-i\lambda k}\rd)^*
 \end{eqnarray*}
are determined by the equation $\Psi^0_{\overline{\mu}}(e^{-i\lambda})\Theta^0_{\overline{\mu}}(e^{-i\lambda})=E_q$  and a solution $\{\psi^0_{\overline{\mu}}(k),\phi^0(k):k\geq 0\}$ to the constrained optimization problem
\begin{eqnarray}
\|\widetilde{\Phi}\me a\|^2-\|\overline{\psi}_{\overline{\mu}}( \me C^{-}_{\overline{\mu},g} + \me C^{+}_{\overline{\mu},g})\|^2\rightarrow\sup,
 \label{minimax2_f_st.n_d_fact}
 \end{eqnarray}
\[
 g(\lambda)=\Phi(e^{-i\lambda})\Phi^*(e^{-i\lambda})\in \mathcal{D}_g.\]
The minimax spectral characteristic $\vec h^0=\vec h_{\overline{\mu}}(f,g^0)$ is calculated by formula (\ref{spectr A_f_st.n_d_fact}) if
$\vec h_{\overline{\mu}}(f,g^0)\in H_{\mathcal{D}}$.
\end{lemma}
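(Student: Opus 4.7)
The plan is to adapt the saddle-point framework used in the preceding lemma, now specialized to the situation where $f$ is treated as known and only $g$ is allowed to vary in $\mathcal{D}_{g}$. First I would recall from Theorem \ref{thm3_f_st.n_d_fact} that for any $g\in\mathcal{D}_{g}$ for which $f+|\beta^{(d)}(i\lambda)|^{2}g$ admits factorization (\ref{fakt1}), the minimum mean square error of the classical optimal linear filter is
\[
\Delta(f,g;\widehat{A}\vec\xi)=\|\widetilde{\Phi}\me a\|^{2}-\|\overline{\psi}_{\overline{\mu}}(\me C^{-}_{\overline{\mu},g}+\me C^{+}_{\overline{\mu},g})\|^{2},
\]
where the dependence on $g$ enters through the coefficients $\phi(k)$ of (\ref{fakt3}) (hence through $\widetilde\Phi$ and $\me C^{\pm}_{\overline{\mu},g}$) and through the coefficients $\psi_{\overline\mu}(k)$ of (\ref{fakt2}), which depend on $(f,g)$ jointly. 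The extremal problem $\sup_{g\in\mathcal{D}_{g}}\Delta(f,g;\widehat{A}\vec\xi)$ is therefore identical to the constrained optimization (\ref{minimax2_f_st.n_d_fact}), whose solution yields candidate coefficients $\{\psi^{0}_{\overline{\mu}}(k),\phi^{0}(k)\}$.

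Next I would invoke the standard saddle-point argument. Let $g^{0}\in\mathcal{D}_{g}$ be a maximizer and define $\vec h^{0}=\vec h_{\overline\mu}(f,g^{0})$ through formula (\ref{spectr A_f_st.n_d_fact}). Theorem \ref{thm3_f_st.n_d_fact} immediately delivers one half of the saddle condition, $\Delta(\vec h^{0};f,g^{0})\le\Delta(\vec h;f,g^{0})$ for every admissible $\vec h$, since $\vec h^{0}$ is by construction the classical optimal filter for the pair $(f,g^{0})$. For the reverse inequality $\Delta(\vec h^{0};f,g)\le\Delta(\vec h^{0};f,g^{0})$ valid for every $g\in\mathcal{D}_{g}$, I would rely on concavity of the value function $g\mapsto\Delta(f,g;\widehat{A}_{f,g}\vec\xi)$ on the convex class $\mathcal{D}_{g}$, a property it inherits from being the pointwise infimum in $\vec h$ of maps $g\mapsto\Delta(\vec h;f,g)$ that are affine in $g$. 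Any maximizer $g^{0}$ of a concave function over a convex set satisfies the first-order optimality condition that $g^{0}$ also maximizes the affine tangent functional $g\mapsto\Delta(\vec h^{0};f,g)$. Taken together, these two inequalities yield the saddle point, and the hypothesis $\vec h_{\overline\mu}(f,g^{0})\in H_{\mathcal{D}}$ guarantees that $\vec h^{0}$ is admissible as an estimator over the full class, so that the definition of least favourable density follows by inspection.

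The main obstacle I anticipate is translating the abstract concave-affine saddle argument into the factorization coordinates employed throughout the paper. The decision variables in (\ref{minimax2_f_st.n_d_fact}) are $\psi_{\overline\mu}(k)$ and $\phi(k)$ rather than $g$ itself, and the former depend nonlinearly on $g$ through factorization (\ref{fakt1}) of the composite density $f+|\beta^{(d)}(i\lambda)|^{2}g$, while they are constrained by the algebraic relation $\Psi^{0}_{\overline{\mu}}\Theta^{0}_{\overline{\mu}}=E_{q}$. Verifying that the stationarity equations of the nonlinear extremal problem (\ref{minimax2_f_st.n_d_fact}) are equivalent to the Kuhn--Tucker conditions for the affine optimization $\sup_{g\in\mathcal{D}_{g}}\Delta(\vec h^{0};f,g)$ therefore requires some careful bookkeeping with the differentials of the factorization map, after which the remaining steps are routine.
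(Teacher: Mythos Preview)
Your first paragraph is exactly the paper's argument, and in fact is all the paper offers: the lemma is stated without proof, as an immediate consequence of Definition~5 together with the error formula (\ref{poh A_f_st.n_d_fact}) of Theorem~\ref{thm3_f_st.n_d_fact}. Since $g^{0}$ is least favourable precisely when $\Delta(h(f,g);f,g)$ is maximal over $\mathcal D_g$, and since that quantity equals $\|\widetilde{\Phi}\me a\|^{2}-\|\overline{\psi}_{\overline{\mu}}(\me C^{-}_{\overline{\mu},g}+\me C^{+}_{\overline{\mu},g})\|^{2}$ under the factorization hypotheses, the identification with (\ref{minimax2_f_st.n_d_fact}) is tautological. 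So for the main clause of the lemma your approach and the paper's coincide.

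Where you diverge is the last sentence of the lemma, about the minimax spectral characteristic. You try to establish the second saddle inequality $\Delta(\vec h^{0};f,g)\le\Delta(\vec h^{0};f,g^{0})$ directly from concavity of the value function. The paper does not do this inside the lemma; instead, immediately after the three lemmas it introduces a \emph{separate} constrained problem (\ref{zad_um_extr_e_d}), namely $-\Delta(h_{\overline\mu}(f^{0},g^{0});f,g)\to\inf$ over $\mathcal D$, and asserts that the saddle inequalities hold \emph{provided} $(f^{0},g^{0})$ also solves this problem. In other words, the paper treats minimaxity of $\vec h^{0}$ as an additional condition to be checked (leading to the subdifferential equation $0\in\partial\Delta_{\mathcal D}(f^{0},g^{0})$ and the explicit equations of Section~\ref{class_D0}), rather than as something deducible from (\ref{minimax2_f_st.n_d_fact}) alone.

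Your concavity route is a legitimate alternative, but the step ``any maximizer $g^{0}$ of a concave function also maximizes the affine tangent functional $g\mapsto\Delta(\vec h^{0};f,g)$'' is not automatic: an affine majorant of a concave function that touches at the maximizer need not itself be maximized there unless the optimizer $h^{0}$ is the unique active selection (a Danskin-type hypothesis). You flag exactly this kind of bookkeeping as the anticipated obstacle, which is fair; just be aware that the paper avoids the issue entirely by packaging it into the extra optimization condition (\ref{zad_um_extr_e_d}) rather than proving it from concavity.
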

\begin{lemma} The spectral density $f^0\in\mathcal{D}_f$ which admits  canonical factorizations (\ref{dd}), (\ref{fakt1}) with the known spectral density $g(\lambda)$ is the least favourable spectral density in the class
 $\md D_f$ for the optimal linear filtering
of the functional $A\vec \xi$ based on observations of the sequence $\vec \xi(m)+\vec \eta(m)$ at points $m=0,-1,-2,\ldots$ if matrix coefficients
of the canonical factorization
\begin{eqnarray*} \label{fakt2_lf}
f^0(\lambda)+|\beta^{(d)}(i\lambda)|^2g(\lambda)=\frac{|\beta^{(d)}(i\lambda)|^2}{|\chi_{\overline{\mu}}^{(d)}(e^{-i\lambda})|^2}
\ld(\sum_{k=0}^{\infty}\theta^0_{\overline{\mu}}(k)e^{-i\lambda k}\rd)\ld(\sum_{k=0}^{\infty}\theta^0_{\overline{\mu}}(k)e^{-i\lambda k}\rd)^*
\end{eqnarray*}
are determined by  the equation $\Psi^0_{\overline{\mu}}(e^{-i\lambda})\Theta^0_{\overline{\mu}}(e^{-i\lambda})=E_q$ and  a solution $\{\psi^0_{\overline{\mu}}(k):k\geq 0\}$ to the constrained optimization problem
\begin{eqnarray}
\|\overline{\psi}_{\overline{\mu}}( \me C^{-}_{\overline{\mu},g} + \me C^{+}_{\overline{\mu},g})\|^2\rightarrow\inf,
 \label{minimax3_f_st.n_d_fact}\end{eqnarray}
\begin{eqnarray*}
 f(\lambda)&=&\frac{|\beta^{(d)}(i\lambda)|^2}{|\chi_{\overline{\mu}}^{(d)}(e^{-i\lambda})|^2}
\Theta_{\overline{\mu}}(e^{-i\lambda})\Theta_{\overline{\mu}}^*(e^{-i\lambda})
-|\beta^{(d)}(i\lambda)|^2\Phi(e^{-i\lambda})\Phi^*(e^{-i\lambda})\in \mathcal{D}_f
\end{eqnarray*}
for the fixed matrix coefficients $\{\phi(k):k\geq0\}$. The minimax spectral characteristic $\vec h^0=\vec h_{\overline{\mu}}(f^0,g)$ is calculated by formula (\ref{spectr A_f_st.n_d_fact}) if
$\vec h_{\overline{\mu}}(f^0,g)\in H_{\mathcal{D}}$.
\end{lemma}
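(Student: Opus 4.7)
The plan is to derive this characterization by the same template used to prove the two preceding lemmas, specializing the saddle-point argument to the case in which only $f$ is unknown. The starting point is the explicit expression \eqref{poh A_f_st.n_d_fact} for the mean square error from Theorem \ref{thm3_f_st.n_d_fact}:
\[
\Delta(f,g;\widehat{A}\vec\xi)=\|\widetilde{\Phi}\me a\|^2-\|\overline{\psi}_{\overline{\mu}}(\me C^{-}_{\overline{\mu},g}+\me C^{+}_{\overline{\mu},g})\|^2.
\]
Since $g(\lambda)$ is fixed with given canonical factorization $g(\lambda)=\Phi(e^{-i\lambda})\Phi^{*}(e^{-i\lambda})$, the coefficients $\{\phi(k):k\geq 0\}$, the operator $\widetilde{\Phi}$, and the vectors $\me C^{\pm}_{\overline{\mu},g}$ (which depend only on $\phi$ and on the fixed $\vec a(k)$, $\vec b_{-\overline{\mu}}(k)$) are all constants of the problem, as is the quantity $\|\widetilde{\Phi}\me a\|^2$. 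Only the coefficients $\{\psi_{\overline{\mu}}(k):k\geq 0\}$ depend on $f\in\md D_{f}$, through the canonical factorizations \eqref{fakt1} and \eqref{fakt2}.

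From this observation, maximising $\Delta(f,g;\widehat{A}\vec\xi)$ over $f\in\md D_f$ is equivalent to minimising $\|\overline{\psi}_{\overline{\mu}}(\me C^{-}_{\overline{\mu},g}+\me C^{+}_{\overline{\mu},g})\|^{2}$ over all $\psi_{\overline{\mu}}$ associated through \eqref{fakt1} with some admissible $f\in\md D_f$, which is exactly the constrained optimisation problem \eqref{minimax3_f_st.n_d_fact}. Given a minimiser $\{\psi^{0}_{\overline{\mu}}(k)\}$, the equation $\Psi^{0}_{\overline{\mu}}\Theta^{0}_{\overline{\mu}}=E_{q}$ determines $\{\theta^{0}_{\overline{\mu}}(k)\}$; then \eqref{fakt1} recovers $f^{0}(\lambda)+|\beta^{(d)}(i\lambda)|^{2}g(\lambda)$, and the fixed knowledge of $g$ yields $f^{0}$ itself. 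The admissibility $f^{0}\in\md D_{f}$ is built into the constraint of the problem.

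The second half of the argument establishes the minimax identity for the characteristic $\vec h^{0}=\vec h_{\overline{\mu}}(f^{0},g)$. The key inequalities are standard: for every $\vec h\in H_{\md D}$,
\[
\sup_{f\in\md D_{f}}\Delta(\vec h;f,g)\geq\sup_{f\in\md D_{f}}\Delta(\vec h_{\overline{\mu}}(f,g);f,g)=\Delta(f^{0},g;\widehat{A}\vec\xi)=\Delta(\vec h^{0};f^{0},g),
\]
where the last equality uses the optimality of $\vec h^{0}$ for the pair $(f^{0},g)$ granted by Theorem \ref{thm3_f_st.n_d_fact}. To complete the minimax identity it remains to verify the reverse inequality $\sup_{f\in\md D_{f}}\Delta(\vec h^{0};f,g)\leq\Delta(\vec h^{0};f^{0},g)$, i.e.\ that the pair $(f^{0},g)$ is a saddle point for the functional $(\vec h,f)\mapsto\Delta(\vec h;f,g)$. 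This is where the argument uses that $(f^{0},g)$ is extremal for the constrained optimization \eqref{minimax3_f_st.n_d_fact}; the corresponding variational/KKT condition forces $\Delta(\vec h^{0};f,g)$ to be maximised over $\md D_f$ precisely at $f^{0}$, because perturbations of $f$ inside $\md D_f$ can only increase the squared norm that appears subtracted in the error formula when evaluated against the minimising $\psi^{0}_{\overline{\mu}}$.

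The main obstacle is the saddle-point verification in the last paragraph, as the supremum of $\Delta(\vec h^{0};f,g)$ over $\md D_f$ is not a priori attained at $f^{0}$ unless $\vec h^{0}\in H_{\md D}$ (so the same estimator is admissible for all spectra in the class) and the extremality of $f^{0}$ is matched by the extremality of $\vec h^{0}$. This is exactly why the lemma states the formula for the minimax characteristic conditionally on $\vec h_{\overline{\mu}}(f^{0},g)\in H_{\md D}$: under this hypothesis, the two inequalities above become equalities and the minimax identity follows.
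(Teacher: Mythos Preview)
Your proposal is correct and follows the same approach that the paper implicitly takes. In the paper these three lemmas are stated without proof, preceded only by the remark that they ``can be verified'' from the definitions and the relations derived earlier; your argument makes this verification explicit by combining the definition of least favourable density with the error formula \eqref{poh A_f_st.n_d_fact}, noting that when $g$ is fixed the term $\|\widetilde{\Phi}\me a\|^2$ and the vectors $\me C^{\pm}_{\overline{\mu},g}$ are constants so that maximising $\Delta$ reduces to \eqref{minimax3_f_st.n_d_fact}. Your saddle-point discussion in the second half is more than the paper provides for the lemma itself---the paper defers that analysis to the paragraph following the three lemmas and to the constrained optimisation problem \eqref{zad_um_extr_e_d}---but it is consistent with that later treatment and correctly identifies why the hypothesis $\vec h_{\overline{\mu}}(f^0,g)\in H_{\mathcal D}$ is needed.
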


The more detailed analysis of properties of the least favorable spectral densities and the minimax-robust spectral characteristics shows that the minimax spectral characteristic $h^0$ and the least favourable spectral densities $f^0$ and $g^0$ form a saddle
point of the function $\Delta(h;f,g)$ on the set
$H_{\mathcal{D}}\times\mathcal{D}$.
The saddle point inequalities
\[
 \Delta(h;f^0,g^0)\geq\Delta(h^0;f^0,g^0)\geq\Delta(h^0;f,g)\quad\forall (f,g)\in
 \mathcal{D},\forall h\in H_{\mathcal{D}}\]
hold true if $h^0=h_{\overline{\mu}}(f^0,g^0)$,
$h_{\overline{\mu}}(f^0,g^0)\in H_{\mathcal{D}}$ and $(f^0,g^0)$ is a solution of the constrained optimization problem
\be
 \widetilde{\Delta}(f,g)=-\Delta(h_{\overline{\mu}}(f^0,g^0);f,g)\to
 \inf,\quad (f,g)\in \mathcal{D},\label{zad_um_extr_e_d}
 \ee
where the functional $\Delta(h_{\overline{\mu}}(f^0,g^0);f,g)$ is calculated by the formula
\begin{eqnarray*}
\Delta\ld(h_{\overline{\mu}}(f^0,g^0);f,g\rd)&=&\frac{1}{2\pi}\ip\frac{|\chi_{\overline{\mu}}^{(d)}(e^{-i\lambda})|^2}{|\beta^{(d)}(i\lambda)|^2}
(\me h^0_{\overline{\mu},f}(e^{-i\lambda}))^{\top}\Psi^0_{\overline{\mu}}(e^{-i\lambda })f(\lambda)
(\Psi^0_{\overline{\mu}}(e^{-i\lambda }))^*\overline{\me h^0_{\overline{\mu},f}(e^{-i\lambda})}d\lambda
\\
&&+
\frac{1}{2\pi}\ip
(\me h^0_{\overline{\mu},g}(e^{-i\lambda}))^{\top}\Psi^0_{\overline{\mu}}(e^{-i\lambda })g(\lambda)
(\Psi^0_{\overline{\mu}}(e^{-i\lambda }))^*\overline{\me h^0_{\overline{\mu},g}(e^{-i\lambda})}d\lambda,
\end{eqnarray*}
where
\begin{eqnarray*}
 \me h^0_{\overline{\mu},f}(e^{-i\lambda})&=&\sum_{m=0}^{\infty}(\overline{\psi}_{\overline{\mu}} \me C^{-}_{\overline{\mu},g} +\overline{\psi}_{\overline{\mu}} \me C^{+}_{\overline{\mu},g})_m e^{-i\lambda m},
\\
 \me h^0_{\overline{\mu},g}(e^{-i\lambda})&=&\ld(\sum_{k=0}^{\infty}\theta^{\top}_{\overline{\mu}}(k)e^{-i\lambda k}\rd)\ld(\sum_{k=0}^{\infty}a(k)e^{-i\lambda k}\rd)-\chi_{\overline{\mu}}^{(d)}(e^{-i\lambda})\ld(\sum_{m=0}^{\infty}(\overline{\psi}_{\overline{\mu}} \me C^{-}_{\overline{\mu},g} +\overline{\psi}_{\overline{\mu}} \me C^{+}_{\overline{\mu},g})_m e^{-i\lambda m}\rd).
\end{eqnarray*}

The constrained optimization problem (\ref{zad_um_extr_e_d}) is equivalent to the unconstrained optimization problem
\be \label{zad_unconst_extr_f_st_d}
 \Delta_{\mathcal{D}}(f,g)=\widetilde{\Delta}(f,g)+ \delta(f,g|\mathcal{D})\to\inf,\ee
where $\delta(f,g|\mathcal{D})$ is the indicator function of the set
$\mathcal{D}$, namely $\delta(f,g|\mathcal{D})=0$ if $(f,g)\in \mathcal{D}$ and $\delta(f|\mathcal{D})=+\infty$ if $(f,g)\notin \mathcal{D}$.
The condition
 $0\in\partial\Delta_{\mathcal{D}}(f^0,g^0)$ characterizes a solution $(f^0,g^0)$ of the stated unconstrained optimization problem. This condition is the necessary and sufficient condition that the point $(f^0,g^0)$ belongs to the set of minimums of the convex functional $\Delta_{\mathcal{D}}(f,g)$ \cite{Moklyachuk2015,Rockafellar}.
 Thus, it allows us to find the equalities for the least favourable spectral densities in some special classes of spectral densities $\md D$.

The form of the functional $\Delta(h_{\overline{\mu}}(f^0,g^0);f,g)$ is suitable for application of the Lagrange method of indefinite
multipliers to the constrained optimization problem \eqref{zad_um_extr_e_d}.
Thus, the complexity of the problem is reduced to  finding the subdifferential of the indicator function of the set of admissible spectral densities. We illustrate the solving of the problem \eqref{zad_unconst_extr_f_st_d} for concrete sets admissible spectral densities  in the following subsections. A semi-uncertain filtering problem, when the spectral density $f(\lambda)$ is known and the spectral density $g(\lambda)$ belongs to  in class  $\md D_g$, is considered as well.

\subsection{Least favorable spectral density in classes $\md D_0 \times \md D_{1\delta}$}
\label{class_D0}

Consider the minimax filtering problem for the functional $A\vec{\xi}$
  for sets of admissible spectral densities $\md D_0^k$, $k=1,2,3,4$ of the sequence with GM increments $\vec \xi(m)$
\begin{eqnarray*}
\md D_{0}^{1} &=&\bigg\{f(\lambda )\left|\frac{1}{2\pi} \int
_{-\pi}^{\pi}
\frac{|\chi_{\overline{\mu}}^{(d)}(e^{-i\lambda})|^2}{|\beta^{(d)}(i\lambda)|^2}
f(\lambda )d\lambda  =P\right.\bigg\},
\\
\md D_{0}^{2} &=&\bigg\{f(\lambda )\left|\frac{1}{2\pi }
\int _{-\pi }^{\pi}
\frac{|\chi_{\overline{\mu}}^{(d)}(e^{-i\lambda})|^2}{|\beta^{(d)}(i\lambda)|^2}
{\rm{Tr}}\,[ f(\lambda )]d\lambda =p\right.\bigg\},
\\
\md D_{0}^{3} &=&\bigg\{f(\lambda )\left|\frac{1}{2\pi }
\int _{-\pi}^{\pi}
\frac{|\chi_{\overline{\mu}}^{(d)}(e^{-i\lambda})|^2}{|\beta^{(d)}(i\lambda)|^2}
f_{kk} (\lambda )d\lambda =p_{k}, k=\overline{1,T}\right.\bigg\},
\\
\md D_{0}^{4} &=&\bigg\{f(\lambda )\left|\frac{1}{2\pi} \int _{-\pi}^{\pi}
\frac{|\chi_{\overline{\mu}}^{(d)}(e^{-i\lambda})|^2}{|\beta^{(d)}(i\lambda)|^2}
\left\langle B_{1} ,f(\lambda )\right\rangle d\lambda  =p\right.\bigg\},
\end{eqnarray*}
where  $p, p_k, k=\overline{1,T}$ are given numbers, $P, B_1$ are given positive-definite Hermitian matrices, and sets of admissible spectral densities $\md D_{V}^{U}$, $k=1,2,3,4$ for the stationary noise sequence $\vec \eta(m)$
\begin{eqnarray*}
\md D_{1\delta}^{1}&=&\left\{g(\lambda )\biggl|\frac{1}{2\pi} \int_{-\pi}^{\pi}
\left|g_{ij} (\lambda )-g_{ij}^{1} (\lambda)\right|d\lambda  \le \delta_{i}^j, i,j=\overline{1,T}\right\}.
\\
\md D_{1\delta}^{2}&=&\left\{g(\lambda )\biggl|\frac{1}{2\pi} \int_{-\pi}^{\pi}
\left|{\rm{Tr}}(g(\lambda )-g_{1} (\lambda))\right|d\lambda \le \delta\right\};
\\
\md D_{1\delta}^{3}&=&\left\{g(\lambda )\biggl|\frac{1}{2\pi } \int_{-\pi}^{\pi}
\left|g_{kk} (\lambda )-g_{kk}^{1} (\lambda)\right|d\lambda  \le \delta_{k}, k=\overline{1,T}\right\};
\\
\md D_{1\delta}^{4}&=&\left\{g(\lambda )\biggl|\frac{1}{2\pi } \int_{-\pi}^{\pi}
\left|\left\langle B_{2} ,g(\lambda )-g_{1}(\lambda )\right\rangle \right|d\lambda  \le \delta\right\};
\end{eqnarray*}
where  $g_{1} ( \lambda )=\{g_{ij}^{1} ( \lambda )\}_{i,j=1}^T$ is a fixed spectral density,  $B_2$ is a given positive-definite Hermitian matrix,
$\delta,\delta_{k},k=\overline{1,T}$, $\delta_{i}^{j}, i,j=\overline{1,T}$, are given numbers.

The condition $0\in\partial\Delta_{\mathcal{D}}(f^0,g^0)$
implies the following equations which determine the least favourable spectral densities for these given sets of admissible spectral densities.

For the first set of admissible spectral densities $\md D_{f0}^1 \times\md D_{g1\delta}^{1}$:
\begin{eqnarray} \label{eq_4_1f_fact}
\left(
{\me h^{0}_{\overline{\mu},f}(e^{i\lambda})}
\right)
\left(
{\me h^{0}_{\overline{\mu},f}(e^{i\lambda})}
\right)^{*}
&=&(\Theta_{\overline{\mu}}(e^{-i\lambda}))^{\top}
\vec{\alpha}_f\cdot \vec{\alpha}_f^{*}
\overline{\Theta_{\overline{\mu}}(e^{-i\lambda})},
\\  \label{eq_5_1g_fact}
\left(
{\me h^{0}_{\overline{\mu},g}(e^{i\lambda})}
\right)
\left(
{\me h^{0}_{\overline{\mu},g}(e^{i\lambda})}
\right)^{*}
&=&
(\Theta_{\overline{\mu}}(e^{-i\lambda}))^{\top}
\left \{ \beta_{ij}\gamma_{ij} ( \lambda ) \right \}_{i,j=1}^{T}
\overline{\Theta_{\overline{\mu}}(e^{-i\lambda})},
\end{eqnarray}
\begin{equation} \label{eq_5_1c_fact}
\frac{1}{2 \pi} \int_{- \pi}^{ \pi} \left|g^0_{ij}(\lambda)-g_{ij}^{1}( \lambda ) \right|d\lambda = \delta_{i}^{j},
\end{equation}
where $\vec{\alpha}_f$, $ \beta_{ij}$ are Lagrange multipliers,  functions $\left| \gamma_{ij} ( \lambda ) \right| \le 1$ and
\[
\gamma_{ij} ( \lambda )= \frac{g_{ij}^{0} ( \lambda )-g_{ij}^{1} (\lambda )}{ \left|g_{ij}^{0} ( \lambda )-g_{ij}^{1}(\lambda) \right|}: \; g_{ij}^{0} ( \lambda )-g_{ij}^{1} ( \lambda ) \ne 0, \; i,j= \overline{1,T}.
\]

For the second set of admissible spectral densities $\md D_{f0}^2 \times\md D_{g1\delta}^{2}$  we have equations
\begin{eqnarray} \label{eq_4_2f_fact}
\left(
{\me h^{0}_{\overline{\mu},f}(e^{i\lambda})}
\right)
\left(
{\me h^{0}_{\overline{\mu},f}(e^{i\lambda})}
\right)^{*} &=&
\alpha_f^{2} (\Theta_{\overline{\mu}}(e^{-i\lambda}))^{\top}\overline{\Theta_{\overline{\mu}}(e^{-i\lambda})},
\\ \label{eq_5_2g_fact}
\left(
{\me h^{0}_{\overline{\mu},g}(e^{i\lambda})}
\right)
\left(
{\me h^{0}_{\overline{\mu},g}(e^{i\lambda})}
\right)^{*} &=&
\beta^{2} \gamma_2( \lambda )(\Theta_{\overline{\mu}}(e^{-i\lambda}))^{\top}\overline{\Theta_{\overline{\mu}}(e^{-i\lambda})},
\end{eqnarray}
\begin{equation} \label{eq_5_2c_fact}
\frac{1}{2 \pi} \int_{-\pi}^{ \pi}
\left|{\mathrm{Tr}}\, (g_0( \lambda )-g_{1}(\lambda )) \right|d\lambda =\delta,
\end{equation}

\noindent where $\alpha _{f}^{2}$, $ \beta^{2}$ are Lagrange multipliers,   function $\left| \gamma_2( \lambda ) \right| \le 1$ and
\[\gamma_2( \lambda )={ \mathrm{sign}}\; ({\mathrm{Tr}}\, (g_{0} ( \lambda )-g_{1} ( \lambda ))): \; {\mathrm{Tr}}\, (g_{0} ( \lambda )-g_{1} ( \lambda )) \ne 0.\]

For the third set of admissible spectral densities $\md D_{f0}^3 \times\md D_{g1\delta}^{3}$  we have equations
\begin{eqnarray} \label{eq_4_3f_fact}
\left(
{\me h^{0}_{\overline{\mu},f}(e^{i\lambda})}
\right)
\left(
{\me h^{0}_{\overline{\mu},f}(e^{i\lambda})}
\right)^{*}
&=&(\Theta_{\overline{\mu}}(e^{-i\lambda}))^{\top}
\left\{\alpha _{fk}^{2} \delta _{kl} \right\}_{k,l=1}^{T}
\overline{\Theta_{\overline{\mu}}(e^{-i\lambda})},
\\   \label{eq_5_g_fact}
\left(
{\me h^{0}_{\overline{\mu},g}(e^{i\lambda})}
\right)
\left(
{\me h^{0}_{\overline{\mu},g}(e^{i\lambda})}
\right)^{*}
&=&
(\Theta_{\overline{\mu}}(e^{-i\lambda}))^{\top}
\left \{ \beta_{k}^{2} \gamma^2_{k} ( \lambda ) \delta_{kl} \right \}_{k,l=1}^{T}
\overline{\Theta_{\overline{\mu}}(e^{-i\lambda})},
\end{eqnarray}
\begin{equation} \label{eq_5_3c_fact}
\frac{1}{2 \pi} \int_{- \pi}^{ \pi}  \left|g^0_{kk} ( \lambda)-g_{kk}^{1} ( \lambda ) \right| d\lambda =\delta_{k},
\end{equation}

\noindent where $\alpha_{fk}^{2}$, $\beta_{k}^{2}$ are Lagrange multipliers, $\delta _{kl}$ are Kronecker symbols,  functions $\left| \gamma^2_{k} ( \lambda ) \right| \le 1$ and
\[\gamma_{k}^2( \lambda )={ \mathrm{sign}}\;(g_{kk}^{0}( \lambda)-g_{kk}^{1} ( \lambda )): \; g_{kk}^{0} ( \lambda )-g_{kk}^{1}(\lambda ) \ne 0, \; k= \overline{1,T}.\]

For the fourth set of admissible spectral densities $\md D_{f0}^4 \times\md D_{g1\delta}^{4}$  we have equations

\begin{eqnarray} \label{eq_4_4f_fact}
\left(
{\me h^{0}_{\overline{\mu},f}(e^{i\lambda})}
\right)
\left(
{\me h^{0}_{\overline{\mu},f}(e^{i\lambda})}
\right)^{*}
&=&
\alpha_f^{2} (\Theta_{\overline{\mu}}(e^{-i\lambda}))^{\top}
B_{1}
\overline{\Theta_{\overline{\mu}}(e^{-i\lambda})},
\\   \label{eq_5_3g_fact}
\left(
{\me h^{0}_{\overline{\mu},g}(e^{i\lambda})}
\right)
\left(
{\me h^{0}_{\overline{\mu},g}(e^{i\lambda})}
\right)^{*}
&=&
\beta^{2} \gamma_2'( \lambda )
(\Theta_{\overline{\mu}}(e^{-i\lambda}))^{\top}
B_{2}
\overline{\Theta_{\overline{\mu}}(e^{-i\lambda})},
\end{eqnarray}
\begin{equation} \label{eq_5_4c_fact}
\frac{1}{2 \pi} \int_{- \pi}^{ \pi}  \left| \left \langle B_{2}, g_0( \lambda )-g_{1} ( \lambda ) \right \rangle \right|d\lambda
= \delta,
\end{equation}
where $\alpha_f^{2}$,  $\beta^{2}$ are Lagrange multipliers,   function
$\left| \gamma_2' ( \lambda ) \right| \le 1$ and
\[\gamma_2' ( \lambda )={ \mathrm{sign}}\; \left \langle B_{2},g_{0} ( \lambda )-g_{1} ( \lambda ) \right \rangle : \; \left \langle B_{2},g_{0} ( \lambda )-g_{1} ( \lambda ) \right \rangle \ne 0.\]

The following theorem holds true.

\begin{theorem}
 The least favorable spectral densities $f^{0}(\lambda)$,  $g^{0}(\lambda)$  in the classes $\md D_{f0}^k \times\md D_{g1\delta}^{k}$, $k=1,2,3,4$ for the optimal linear filtering of the functional  $A\vec{\xi}$ from observations of the sequence $\vec{\xi}(m)+ \vec{\eta}(m)$ at points  $m=0,-1,-2,\ldots$  are determined
by  canonical factorizations (\ref{dd}), (\ref{fakt1}) and (\ref{fakt3}),
 equations
\eqref{eq_4_1f_fact}--\eqref{eq_5_1c_fact},  \eqref{eq_4_2f_fact}--\eqref{eq_5_2c_fact}, \eqref{eq_4_3f_fact}--\eqref{eq_5_3c_fact}, \eqref{eq_4_4f_fact}--\eqref{eq_5_4c_fact},
respectively,
 constrained optimization problem (\ref{minimax1_f_st.n_d_fact}) and restrictions  on densities from the corresponding classes $\md D_{f0}^k, \md D_{g1\delta}^{k},k=1,2,3,4$.  The minimax-robust spectral characteristic of the optimal estimate of the functional $A\vec{\xi}$ is determined by the formula (\ref{spectr A_f_st.n_d_fact}).
\end{theorem}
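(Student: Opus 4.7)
The plan is to derive each of the four systems of equations by verifying the characterization $0\in\partial\Delta_{\mathcal{D}}(f^{0},g^{0})$ of the unconstrained optimization problem \eqref{zad_unconst_extr_f_st_d}, and then to invoke the saddle point inequalities to conclude that the spectral characteristic \eqref{spectr A_f_st.n_d_fact} evaluated at $(f^{0},g^{0})$ is the minimax-robust one. Throughout, I would work with the functional $\widetilde{\Delta}(f,g)=-\Delta(h_{\overline{\mu}}(f^{0},g^{0});f,g)$ written in the explicit form displayed before \eqref{zad_unconst_extr_f_st_d}, which is linear in $f$ and in $g$, so its Gateaux derivatives with respect to the density arguments are exactly the Hermitian matrix kernels
\[
\frac{|\chi_{\overline{\mu}}^{(d)}(e^{-i\lambda})|^{2}}{|\beta^{(d)}(i\lambda)|^{2}}
(\Psi_{\overline{\mu}}^{0}(e^{-i\lambda}))^{*}\,\overline{\me h^{0}_{\overline{\mu},f}(e^{-i\lambda})}(\me h^{0}_{\overline{\mu},f}(e^{-i\lambda}))^{\top}\,\Psi_{\overline{\mu}}^{0}(e^{-i\lambda})
\]
and the analogous expression in $g$ with $\me h^{0}_{\overline{\mu},g}$ in place of $\me h^{0}_{\overline{\mu},f}$ and without the ratio $|\chi_{\overline{\mu}}^{(d)}|^{2}/|\beta^{(d)}|^{2}$. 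Using the factorization \eqref{fakt1} and the defining equation $\Psi_{\overline{\mu}}^{0}\Theta_{\overline{\mu}}^{0}=E_{q}$ these kernels can be rewritten so that the outer factors $(\Theta_{\overline{\mu}}^{0}(e^{-i\lambda}))^{\top}$ and $\overline{\Theta_{\overline{\mu}}^{0}(e^{-i\lambda})}$ appear on the two sides, matching the structure of the right-hand sides in \eqref{eq_4_1f_fact}--\eqref{eq_4_4f_fact} and \eqref{eq_5_1g_fact}--\eqref{eq_5_3g_fact}.

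Next I would treat each pair of admissible classes by computing $\partial\delta((f,g)\,|\,\mathcal{D}_{f0}^{k}\times\mathcal{D}_{g1\delta}^{k})$ separately. For the $\mathcal{D}_{f0}^{k}$ component the constraint is a linear moment equality, so the subdifferential is a single constant matrix times the weight $|\chi_{\overline{\mu}}^{(d)}(e^{-i\lambda})|^{2}/|\beta^{(d)}(i\lambda)|^{2}$ appearing in the integrand; the constant is a Hermitian Lagrange multiplier of the appropriate shape ($\vec{\alpha}_{f}\vec{\alpha}_{f}^{*}$, $\alpha_{f}^{2}E_{T}$, a diagonal $\{\alpha_{fk}^{2}\delta_{kl}\}$, or $\alpha_{f}^{2}B_{1}$). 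Writing the Euler equation $0\in\partial\widetilde{\Delta}(f^{0},g^{0})+\partial\delta(f^{0},g^{0}\,|\,\mathcal{D})$ componentwise in $f$ cancels the weight $|\chi_{\overline{\mu}}^{(d)}|^{2}/|\beta^{(d)}|^{2}$ and produces \eqref{eq_4_1f_fact}, \eqref{eq_4_2f_fact}, \eqref{eq_4_3f_fact}, \eqref{eq_4_4f_fact}, respectively.

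For the $\mathcal{D}_{g1\delta}^{k}$ component the constraint is of $L_{1}$-ball type around the fixed density $g_{1}$. Here the subdifferential of the indicator function is not single-valued, because the functional $\int|g_{ij}-g_{ij}^{1}|\,d\lambda$ is only Lipschitz; its subdifferential on the active boundary is the set of Lagrange multipliers of the form $\beta_{ij}\gamma_{ij}(\lambda)$ with $|\gamma_{ij}(\lambda)|\le 1$ and $\gamma_{ij}(\lambda)=\operatorname{sign}(g_{ij}^{0}(\lambda)-g_{ij}^{1}(\lambda))$ wherever $g_{ij}^{0}(\lambda)\ne g_{ij}^{1}(\lambda)$. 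Inserting this into the Euler equation in $g$ yields \eqref{eq_5_1g_fact}, \eqref{eq_5_2g_fact}, \eqref{eq_5_g_fact}, \eqref{eq_5_3g_fact} for the four cases, while the boundary activity gives the normalization equations \eqref{eq_5_1c_fact}, \eqref{eq_5_2c_fact}, \eqref{eq_5_3c_fact}, \eqref{eq_5_4c_fact}. Combined with the canonical factorizations \eqref{dd}, \eqref{fakt1}, \eqref{fakt3} and the constrained problem \eqref{minimax1_f_st.n_d_fact}, these equations determine $(f^{0},g^{0})$ uniquely up to the admissibility constraints, and the minimax characteristic is then given by \eqref{spectr A_f_st.n_d_fact} via the preceding lemmas.

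The main technical obstacle I anticipate is the correct identification of the subdifferential for the $L_{1}$-type classes $\mathcal{D}_{g1\delta}^{k}$. One has to argue that on the set $\{g_{ij}^{0}=g_{ij}^{1}\}$ the multiplier $\gamma_{ij}(\lambda)$ is a free element of $[-1,1]$ and verify that the resulting equation \eqref{eq_5_1g_fact} (and its analogues) is consistent with the nonnegative-definiteness of $g^{0}(\lambda)$ and with the factorization \eqref{fakt3}. A secondary, but more bookkeeping-heavy, point will be to carry out the rewriting of the two kernels arising from differentiating $\widetilde{\Delta}$ so that the factors $(\Theta_{\overline{\mu}}^{0}(e^{-i\lambda}))^{\top}$ and $\overline{\Theta_{\overline{\mu}}^{0}(e^{-i\lambda})}$ appear exactly in the positions required by the statement; this uses the relation $\Psi_{\overline{\mu}}^{0}\Theta_{\overline{\mu}}^{0}=E_{q}$ together with Remark \ref{remark_density_adjoint} to move transposition and complex conjugation across the factorization.
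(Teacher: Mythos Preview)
Your proposal is correct and follows essentially the same route as the paper: the paper does not give a separate formal proof of this theorem but derives the equations directly from the condition $0\in\partial\Delta_{\mathcal{D}}(f^{0},g^{0})$ via Lagrange multipliers for the linear moment constraints on $f$ and via the subdifferential of the $L_{1}$-ball indicator for the constraints on $g$, exactly as you outline. Your identification of the multiplier structures (rank-one, scalar multiple of $E_{T}$, diagonal, scalar multiple of $B_{1}$ for $f$; the $\gamma$-functions with $|\gamma|\le 1$ and equal to the sign on the active set for $g$) and of the boundary-activity normalizations matches the paper's derivation, and the rewriting using $\Psi_{\overline{\mu}}^{0}\Theta_{\overline{\mu}}^{0}=E_{q}$ is precisely the mechanism by which the $\Theta_{\overline{\mu}}$-factors appear on the right-hand sides.
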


\subsection{Semi-uncertain filtering problem in classes $\md D_{\eps}$ of least favorable noise spectral density}

Consider a semi-uncertain filtering problem for the functional $A\vec{\xi}$, where the spectral density  $g(\lambda)$ of the stationary noise sequence  $\vec \eta(m)$ is known and $f(\lambda)$ of the  sequence with GM increments $\vec \xi(m)$  the spectral density  belongs to the
   sets of admissible spectral densities $\md D_{\eps}^k,k=1,2,3,4$
   \begin{eqnarray*}
\md D_{\varepsilon }^{1}&=&\bigg\{f(\lambda )\bigg|f(\lambda)=(1-\varepsilon )f_{1} (\lambda )+\varepsilon W(\lambda ),
\frac{1}{2\pi } \int _{-\pi}^{\pi}\frac{|\chi_{\overline{\mu}}^{(d)}(e^{-i\lambda})|^2}{|\beta^{(d)}(i\lambda)|^2}
f(\lambda )d\lambda=P\bigg\},
\\
\md D_{\varepsilon }^{2}  &=&\bigg\{f(\lambda )\bigg|{\mathrm{Tr}}\,
[f(\lambda )]=(1-\varepsilon ) {\mathrm{Tr}}\,  [f_{1} (\lambda
)]+\varepsilon {\mathrm{Tr}}\,  [W(\lambda )],
\frac{1}{2\pi} \int _{-\pi}^{\pi}\frac{|\chi_{\overline{\mu}}^{(d)}(e^{-i\lambda})|^2}{|\beta^{(d)}(i\lambda)|^2}
{\mathrm{Tr}}\,
[f(\lambda )]d\lambda =p \bigg\};
\\
\md D_{\varepsilon }^{3}  &=&\bigg\{f(\lambda )\bigg|f_{kk} (\lambda)
=(1-\varepsilon )f_{kk}^{1} (\lambda )+\varepsilon w_{kk}(\lambda),
\frac{1}{2\pi} \int _{-\pi}^{\pi}\frac{|\chi_{\overline{\mu}}^{(d)}(e^{-i\lambda})|^2}{|\beta^{(d)}(i\lambda)|^2}
f_{kk} (\lambda)d\lambda  =p_{k} , k=\overline{1,T}\bigg\};
\\
\md D_{\varepsilon }^{4} &=&\bigg\{f(\lambda )\bigg|\left\langle B_{1},f(\lambda )\right\rangle =(1-\varepsilon )\left\langle B_{2},f_{1} (\lambda )\right\rangle+\varepsilon \left\langle B_{2},W(\lambda )\right\rangle,
\frac{1}{2\pi}\int _{-\pi}^{\pi}\frac{|\chi_{\overline{\mu}}^{(d)}(e^{-i\lambda})|^2}{|\beta^{(d)}(i\lambda)|^2}
\left\langle B_{2} ,f(\lambda )\right\rangle d\lambda =p\bigg\};
\end{eqnarray*}
where  $f_{1} ( \lambda )$ is a fixed spectral density, $W(\lambda)$ is an unknown spectral density, $p,  p_k, k=\overline{1,T}$, are given numbers, $P, B_2$ are  given positive-definite Hermitian matrices.

The condition $0\in\partial\Delta_{\mathcal{D}}(f^0,g)$
implies the equations which determine the least favourable spectral densities of the noise sequence $\vec \xi(m)$. Note that the elements $\me C^{-}_{\overline{\mu},g}$ and $\me C^{+}_{\overline{\mu},g}$ are known and determined by the coefficients $\{\phi(k),k\geq0\}$ of  the canonical factorization of the spectral density matrix $g(\lambda)$.

For the first set of admissible spectral density $\md D_{\varepsilon}^{1}$ we have an equation
\begin{multline}
\left(
{\sum_{m=0}^{\infty}\ld(\overline{\psi}_{\overline{\mu}} (\me C^{-}_{\overline{\mu},g} + \me C^{+}_{\overline{\mu},g})\rd)_m e^{-i\lambda m}}
\right)
\left(
{\sum_{m=0}^{\infty}\ld(\overline{\psi}_{\overline{\mu}} (\me C^{-}_{\overline{\mu},g} + \me C^{+}_{\overline{\mu},g})\rd)_m e^{-i\lambda m}}
\right)^{*}
\\
=
\left(\sum_{k=0}^{\infty}\theta_{\overline{\mu}}(k)e^{-i\lambda k}\right)^{\top}
(\vec{\alpha}_f\cdot \vec{\alpha}_f^{*}+\Gamma(\lambda))
\left(\overline{\sum_{k=0}^{\infty}\theta_{\overline{\mu}}(k)e^{-i\lambda k}}\right), \label{eq_5_1f_fact}
\end{multline}

\noindent where $\vec{\alpha}_f$  ia a vector of Lagrange multipliers, matrix $\Gamma(\lambda )\le 0$ and $\Gamma(\lambda )=0$ if $f_{0}(\lambda )>(1-\varepsilon )f_{1} (\lambda )$.

For the second set of admissible spectral densities  $\md D_{\varepsilon}^{2}$ we have an equation
\begin{multline}
\left(
{\sum_{m=0}^{\infty}\ld(\overline{\psi}_{\overline{\mu}} (\me C^{-}_{\overline{\mu},g} + \me C^{+}_{\overline{\mu},g})\rd)_m e^{-i\lambda m}}
\right)
\left(
{\sum_{m=0}^{\infty}\ld(\overline{\psi}_{\overline{\mu}} (\me C^{-}_{\overline{\mu},g} + \me C^{+}_{\overline{\mu},g})\rd)_m e^{-i\lambda m}}
\right)^{*}
\\
=
(\alpha_f^{2} +\gamma(\lambda ))\left(\sum_{k=0}^{\infty}\theta_{\overline{\mu}}(k)e^{-i\lambda k}\right)^{\top}\left(\overline{\sum_{k=0}^{\infty}\theta_{\overline{\mu}}(k)e^{-i\lambda k}}\right),\label{eq_5_2f_fact}
\end{multline}

\noindent where  $\alpha _{f}^{2}$ is a Lagrange multiplier, function $\gamma(\lambda )\le 0$ and $\gamma(\lambda )=0$ if ${\mathrm{Tr}}\,[f_{0} (\lambda )]>(1-\varepsilon ) {\mathrm{Tr}}\, [f_{1} (\lambda )]$.

For the third set of admissible spectral densities $\md D_{\varepsilon}^{3}$, we have an equation
\begin{multline}
\left(
{\sum_{m=0}^{\infty}\ld(\overline{\psi}_{\overline{\mu}} (\me C^{-}_{\overline{\mu},g} + \me C^{+}_{\overline{\mu},g})\rd)_m e^{-i\lambda m}}
\right)
\left(
{\sum_{m=0}^{\infty}\ld(\overline{\psi}_{\overline{\mu}} (\me C^{-}_{\overline{\mu},g} + \me C^{+}_{\overline{\mu},g})\rd)_m e^{-i\lambda m}}
\right)^{*}
\\
=
\left(\sum_{k=0}^{\infty}\theta_{\overline{\mu}}(k)e^{-i\lambda k}\right)^{\top}
\left\{(\alpha_{fk}^{2} +\gamma_{k} (\lambda ))\delta _{kl} \right\}_{k,l=1}^{T}
\left(\overline{\sum_{k=0}^{\infty}\theta_{\overline{\mu}}(k)e^{-i\lambda k}}\right), \label{eq_5_3f_fact}
\end{multline}

\noindent where  $\alpha _{fk}^{2}$  are Lagrange multipliers,
 $\delta _{kl}$ are Kronecker symbols, functions $\gamma_{k}(\lambda )\le 0$ and $\gamma_{k}(\lambda )=0$ if $f_{kk}^{0}(\lambda )>(1-\varepsilon )f_{kk}^{1} (\lambda )$.

For the fourth set of admissible spectral densities $\md D_{\varepsilon}^{4}$, we have AN equation
\begin{multline}
\left(
{\sum_{m=0}^{\infty}\ld(\overline{\psi}_{\overline{\mu}} (\me C^{-}_{\overline{\mu},g} + \me C^{+}_{\overline{\mu},g})\rd)_m e^{-i\lambda m}}
\right)
\left(
{\sum_{m=0}^{\infty}\ld(\overline{\psi}_{\overline{\mu}} (\me C^{-}_{\overline{\mu},g} + \me C^{+}_{\overline{\mu},g})\rd)_m e^{-i\lambda m}}
\right)^{*}
\\
=
(\alpha_f^{2} +\gamma'(\lambda ))\left(\sum_{k=0}^{\infty}\theta_{\overline{\mu}}(k)e^{-i\lambda k}\right)^{\top}
 B_{1}
\left( \overline{\sum_{k=0}^{\infty}\theta_{\overline{\mu}}(k)e^{-i\lambda k}}\right), \label{eq_5_4f_fact}
\end{multline}

\noindent where $\alpha _{f}^{2}$ is   a Lagrange multiplier, function $\gamma' ( \lambda )\le 0$ and $\gamma' ( \lambda )=0$ if $\langle B_{1} ,f_{0} ( \lambda ) \rangle>(1- \varepsilon ) \langle B_{1} ,f_{1} ( \lambda ) \rangle$.

The following theorems  hold   true.

\begin{theorem}
 If the spectral density $g(\lambda)$ is known, the least favorable spectral density $f^{0}(\lambda)$ in the classes $\md D_{\varepsilon}^{k}$, $k=1,2,3,4$    for the optimal linear foltering of the functional  $A\vec{\xi}$ from observations of the sequence $\vec{\xi}(m)+ \vec{\eta}(m)$ at points  $m=0,-1,-2,\ldots$  is determined
by  canonical factorizations (\ref{dd}) and (\ref{fakt1}),
 equations
\eqref{eq_5_1f_fact},  \eqref{eq_5_2f_fact}, \eqref{eq_5_3f_fact}, \eqref{eq_5_4f_fact},
respectively,
 constrained optimization problem (\ref{minimax3_f_st.n_d_fact}) and restrictions  on density from the corresponding classes $\md D_{\varepsilon}^{k}$, $k=1,2,3,4$.  The minimax-robust spectral characteristic of the optimal estimate of the functional $A\vec{\xi}$ is determined by the formula (\ref{spectr A_f_st.n_d_fact}).
\end{theorem}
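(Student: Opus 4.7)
The plan is to verify the theorem via the saddle-point characterization: the least favourable $f^{0}$ in the semi-uncertain setting (with $g$ fixed) is the solution of the unconstrained optimization problem \eqref{zad_unconst_extr_f_st_d} that is characterized by the inclusion $0\in\partial\Delta_{\mathcal{D}}(f^{0},g)$. Since the functional $\widetilde{\Delta}(f,g)=-\Delta(h_{\overline{\mu}}(f^{0},g);f,g)$ is convex in $f$ and the admissible sets $\mathcal{D}_{\varepsilon}^{k}$ are convex, this inclusion is both necessary and sufficient \cite{Moklyachuk2015,Rockafellar}. The proof then amounts to computing $\partial\widetilde{\Delta}(f,g)$ together with $\partial\delta(\cdot|\mathcal{D}_{\varepsilon}^{k})$ and rewriting the resulting equation in the form displayed in each of \eqref{eq_5_1f_fact}--\eqref{eq_5_4f_fact}.

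First, I would compute the Gateaux derivative of $\widetilde{\Delta}(f,g)$ in the direction of a Hermitian perturbation $\Delta f(\lambda)$. Only the first integral in the explicit formula for $\Delta(h_{\overline{\mu}}(f^{0},g);f,g)$ depends on $f$, and after invoking the identity $\Psi_{\overline{\mu}}\Theta_{\overline{\mu}}=E_{q}$ from \eqref{fakt1}--\eqref{fakt2}, the derivative has the Hermitian integral kernel
\[
K(\lambda)=-\frac{|\chi_{\overline{\mu}}^{(d)}(e^{-i\lambda})|^{2}}{|\beta^{(d)}(i\lambda)|^{2}}\,
(\Psi^{0}_{\overline{\mu}}(e^{-i\lambda}))^{*}\,
\overline{\me h^{0}_{\overline{\mu},f}(e^{-i\lambda})}\,(\me h^{0}_{\overline{\mu},f}(e^{-i\lambda}))^{\top}\,
\Psi^{0}_{\overline{\mu}}(e^{-i\lambda}).
\]
Multiplying on both sides by $(\Theta^{0}_{\overline{\mu}})^{\top}$ and $\overline{\Theta^{0}_{\overline{\mu}}}$ (which are the inverses of $\Psi^{0}_{\overline{\mu}}$ thanks to $\Psi^{0}_{\overline{\mu}}\Theta^{0}_{\overline{\mu}}=E_{q}$) will turn the left-hand sides of \eqref{eq_5_1f_fact}--\eqref{eq_5_4f_fact} into the natural object appearing on the $f$-side of the subdifferential equation.

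Second, I would compute the subdifferential of $\delta(\cdot|\mathcal{D}_{\varepsilon}^{k})$ for each class. Each $\mathcal{D}_{\varepsilon}^{k}$ is the intersection of an affine moment constraint (integral = $P$, $p$, $p_{k}$, or $p$ depending on $k$) with the one-sided majorization constraint $f(\lambda)\ge(1-\varepsilon)f_{1}(\lambda)$ (in the matrix, trace, diagonal, or $\langle B_{1},\cdot\rangle$ sense). Standard convex-analytic arguments then produce: a Lagrange multiplier $\vec{\alpha}_{f}\vec{\alpha}_{f}^{*}$, $\alpha_{f}^{2}$, $\{\alpha_{fk}^{2}\delta_{kl}\}$, or $\alpha_{f}^{2}B_{1}$, respectively, from the moment constraint; and a nonpositive multiplier $\Gamma(\lambda)$, $\gamma(\lambda)$, $\gamma_{k}(\lambda)\delta_{kl}$, or $\gamma'(\lambda)$, satisfying the complementary slackness condition (vanishing whenever the corresponding $\varepsilon$-contamination inequality is strict). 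Adding these two contributions to $K(\lambda)$ and equating the sum to zero yields exactly the system \eqref{eq_5_1f_fact}--\eqref{eq_5_4f_fact} after conjugation by $(\Theta^{0}_{\overline{\mu}})^{\top}$ and $\overline{\Theta^{0}_{\overline{\mu}}}$.

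Finally, I would close the argument by checking the saddle-point inequalities. Because $\Delta(h;f,g)$ is convex in $h$ and the extremal $h^{0}=h_{\overline{\mu}}(f^{0},g)$ solves the inner minimization by Theorem \ref{thm3_f_st.n_d_fact}, one has $\Delta(h;f^{0},g)\ge \Delta(h^{0};f^{0},g)$ for every $h\in H_{\mathcal{D}}$. The reverse inequality $\Delta(h^{0};f^{0},g)\ge \Delta(h^{0};f,g)$ for all $f\in \mathcal{D}_{\varepsilon}^{k}$ follows from the fact that $f^{0}$ was chosen to maximize $\widetilde{\Delta}(\cdot,g)=-\Delta(h^{0};\cdot,g)$ on the admissible class, that is, from the constrained optimization problem \eqref{minimax3_f_st.n_d_fact}. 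The main technical obstacle will be the careful handling of the subdifferential of the majorization constraint in the non-scalar cases $k=1,3,4$, where the inequality $f(\lambda)\ge(1-\varepsilon)f_{1}(\lambda)$ must be interpreted in the appropriate matrix/bilinear sense and the multiplier $\Gamma(\lambda)$ (or its diagonal/bilinear analogues) must be verified to be negative semi-definite and to satisfy complementary slackness with respect to the correct spectral quantity; the scalar trace case $k=2$ is a direct instance of the classical Huber $\varepsilon$-contamination argument.
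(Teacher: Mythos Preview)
Your proposal is correct and follows essentially the same approach as the paper: the theorem is stated in the paper without a separate proof, as a direct consequence of the general saddle-point/subdifferential framework laid out at the beginning of Section~\ref{minimax_filtering} (the unconstrained problem \eqref{zad_unconst_extr_f_st_d} and the condition $0\in\partial\Delta_{\mathcal{D}}(f^{0},g)$), together with the explicit computation of the subdifferential for each class $\mathcal{D}_{\varepsilon}^{k}$ that precedes the theorem. Your write-up simply makes explicit the Gateaux-derivative computation and the Lagrange-multiplier/complementary-slackness bookkeeping that the paper leaves implicit.
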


\section{Conclusions}

In this article, we present a solution of the filtering problem for stochastic sequences with periodically stationary   multiple seasonal increments, or sequences with periodically stationary general multiplicative (GM) increments, introduced in the article by Luz and Moklyachuk \cite{Luz_Mokl_extra_GMI}.
We propose a  solution of the filtering  problem in the case where the spectral densities of the sequence $\xi(m)$ and a noise  sequence $\eta(m)$ are exactly known.  The estimates are derived in terms of coefficients of canonical factorizations of the spectral densities, making use of results obtained in \cite{Luz_Mokl_filt_GMI} by using the Fourier transformations  of the spectral densities.
The minimax-robust approach to filtering problem is applied in the case of spectral uncertainty where densities of sequences are not exactly known while, instead, sets of admissible spectral densities allowing canonical factorizations   are specified.
We propose a representation of the mean square error in the form of a linear functional in $L_1$ with respect to spectral densities, which allows
us to solve the corresponding constrained optimization problem and describe the minimax (robust) estimates of the functionals.
Described relations   determine the least favourable spectral densities and the minimax spectral characteristics of the optimal estimates of linear functionals
for a list of specific classes of admissible spectral densities.

\section*{Appendix}

\emph{Proof of Lemma \ref{lema_fact_3}}

 Factorizations (\ref{fakt3}), (\ref{fakt2}) and Remark \ref{remark_density_adjoint} imply
\begin{eqnarray*}
\sum_{k\in\mr Z} s_{\overline{\mu}}(k)e^{i\lambda k}&=&
\frac{|\beta^{(d)}(i\lambda)|^2}{|\chi_{\overline{\mu}}^{(d)}(e^{-i\lambda})|^2}
\ld[g(\lambda)(f(\lambda)+|\beta^{(d)}(i\lambda)|^2g(\lambda))^{-1}\rd]^{\top}
=(\Psi_{\overline{\mu}}(e^{-i\lambda}))^{\top}\overline{\Psi}_{\overline{\mu}}(e^{-i\lambda})\overline{g}(\lambda)
\\
&=&\sum_{l=0}^{\infty}\psi_{\overline{\mu}}^{\top}(l)e^{-i\lambda l} \sum_{j\in\mr Z}Z_{\overline{\mu}}(j)e^{i\lambda j}
=\sum_{k\in\mr Z} \sum_{l=0}^{\infty}\psi_{\overline{\mu}}^{\top}(l)Z_{\overline{\mu}}(l+k)e^{i\lambda k}.
\end{eqnarray*}
Then
\begin{eqnarray*}
 (\Theta^{\top}_{\overline{\mu}}\me S_{\overline{\mu}}\widetilde{\me a}_{\overline{\mu}})_m&=&\sum_{j=-n(\gamma)}^{\infty}\sum_{p=m}^{\infty}\theta^{\top}_{\overline{\mu}}(p-m)S_{\overline{\mu}}(p+j+1)a_{-\mu}(j)
 \\&=&
 \sum_{j=-n(\gamma)}^{\infty}\sum_{p=m}^{\infty}\sum_{l=0}^{\infty}
\theta^{\top}_{\overline{\mu}}(p-m)\psi_{\overline{\mu}}^{\top}(l)Z_{\overline{\mu}}(l+p+j+1)a_{-\mu}(j)
\\&=&
 \sum_{j=-n(\gamma)}^{\infty}\sum_{p=m}^{\infty}\sum_{k=p}^{\infty}
(\psi_{\overline{\mu}}(k-p)\theta_{\overline{\mu}}(p-m))^{\top}Z_{\overline{\mu}}(k+j+1)a_{-\mu}(j)
\\&=&
 \sum_{j=-n(\gamma)}^{\infty}\sum_{k=m}^{\infty}\mt{diag}_q(\delta_{k,m})Z_{\overline{\mu}}(k+j+1)a_{-\mu}(j)
\\&=&
 \sum_{j=-n(\gamma)}^{\infty}Z_{\overline{\mu}}(m+j+1)a_{-\mu}(j).
 \end{eqnarray*}
The representation for  $Z_{\overline{\mu}}(j)$ follows from
\[
\sum_{j\in\mr Z}Z_{\overline{\mu}}(j)e^{i\lambda j}
=\overline{\Psi}_{\overline{\mu}}(e^{-i\lambda})\overline{g}(\lambda)
=\sum_{k\in\mr Z} \sum_{l=0}^{\infty}\overline{\Psi}_{\overline{\mu}}(l)\overline{g}_{\overline{\mu}}(l-k)e^{i\lambda k}.
\quad \square\]

\

\emph{Proof of Theorem \ref{thm3_f_st.n_d_fact}}

Under the conditions of Lemmas $\ref{lema_fact_2}$ and $\ref{lema_fact_3}$ on the spectral densities $f(\lambda)$ and $g(\lambda)$, formulas (\ref{spectr A_f_st.n_d}) and (\ref{poh A_f_st.n_d}) can be rewritten as follows. Make the following transformations:
\begin{eqnarray*}
&&\notag\frac{|\beta^{(d)}(i\lambda)|^2}
{|\chi_{\overline{\mu}}^{(d)}(e^{-i\lambda})|^2}
\ld[(f(\lambda)+|\beta^{(d)}(i\lambda)|^2g(\lambda))^{-1}\rd]^{\top}\ld(\sum_{k=0}^{\infty} \ld(\me
 P_{\overline{\mu}}^{-1}\me S_{\overline{\mu}}\me a_{\overline{\mu}}\rd)_ke^{i\lambda
 (k+1)}\rd)
 \\&=& \ld(\sum_{k=0}^{\infty}\psi_{\overline{\mu}}^{\top}(k)e^{-i\lambda k}\rd)\sum_{j=0}^{\infty}\sum_{k=0}^{\infty}
 \overline{\psi}_{\overline{\mu}}(j)(\overline{\Theta}_{\overline{\mu}}\widetilde{\me e}_{\overline{\mu}})_ke^{i\lambda(k+j+1)}
 \\&=& \ld(\sum_{k=0}^{\infty}\psi_{\overline{\mu}}^{\top}(k)e^{-i\lambda k}\rd)\sum_{m=0}^{\infty}\sum_{p=0}^{m} \sum_{k=p}^m\overline{\psi}_{\overline{\mu}}(m-k)\overline{\theta}_{\overline{\mu}}(k-p)\widetilde{e}_{\overline{\mu}}(p)e^{i\lambda (m+1)}
\\&=& \ld(\sum_{k=0}^{\infty}\psi_{\overline{\overline{\mu}}}^{\top}(k)e^{-i\lambda k}\rd)\sum_{m=0}^{\infty}\sum_{p=0}^{m}\mt{diag}_q(\delta_{m,p})\widetilde{e}_{\overline{\mu}}(m)e^{i\lambda(m+1)}
 \\&=& \ld(\sum_{k=0}^{\infty}\psi_{\overline{\overline{\mu}}}^{\top}(k)e^{-i\lambda k}\rd)\sum_{m=0}^{\infty}\widetilde{e}_{\overline{\mu}}(m)e^{i\lambda(m+1)},\end{eqnarray*}
and
\begin{eqnarray}
  \notag && \frac{|\beta^{(d)}(i\lambda)|^2}
{\chi_{\overline{\mu}}^{(d)}(e^{-i\lambda})|^2}
\ld[(f(\lambda)+|\beta^{(d)}(i\lambda)|^2g(\lambda))^{-1}\rd]^{\top}(g(\lambda))^{\top}
A(e^{-i\lambda }) (1-e^{i\lambda \mu})^n
 \\
\notag &=& \Psi_{\overline{\mu}}^{\top}(e^{-i\lambda})\overline{\Psi_{\overline{\mu}}(e^{-i\lambda})}\overline{g(\lambda)}
A(e^{-i\lambda }) (1-e^{i\lambda \mu})^n
 \\
 \notag &=&\ld(\sum_{k=0}^{\infty}\psi^{\top}_{\overline{\mu}}(k)e^{-i\lambda k}\rd)
 \sum_{m\in \mr Z}^{\infty}
\sum_{j=-n(\gamma)}^{\infty}Z_{\overline{\mu}}(m+j)a_{-\mu}(j)e^{i\lambda m}.\label{simple_sp_char_part2_f_st.n_d}
\end{eqnarray}
 Then   obtain:
\begin{eqnarray*}
\notag \vec{h}_{\overline{\mu}}(\lambda)
&=&\frac{\chi_{\overline{\mu}}^{(d)}(e^{-i\lambda})}{\beta^{(d)}(i\lambda)}
\ld(\sum_{k=0}^{\infty}\psi^{\top}_{\overline{\mu}}(k)e^{-i\lambda k}\rd)
\sum_{m=0}^{\infty}\sum_{j=-\mu n}^{\infty}\theta^{\top}_{\overline{\mu}}(l)Z_{\overline{\mu}}(j-m)a_{-\mu}(j)e^{-i\lambda m}
\\
\notag &=&\frac{\chi_{\overline{\mu}}^{(d)}(e^{-i\lambda})}{\beta^{(d)}(i\lambda)}
\ld(\sum_{k=0}^{\infty}\psi^{\top}_{\overline{\mu}}(k)e^{-i\lambda k}\rd)
\\
\notag &&\times \ld(\sum_{m=0}^{\infty}\sum_{j=0}^{\infty}Z_{\overline{\mu}}(j-m)a_{-\mu}(j)e^{-i\lambda m}+\sum_{m=0}^{\infty}\sum_{j=1}^{n(\gamma)}Z_{\overline{\mu}}(-j-m)b_{-\mu}(j)e^{-i\lambda m}\rd)
\\
\notag &=&\frac{\chi_{\overline{\mu}}^{(d)}(e^{-i\lambda})}{\beta^{(d)}(i\lambda)}
\ld(\sum_{k=0}^{\infty}\psi^{\top}_{\overline{\mu}}(k)e^{-i\lambda k}\rd)
\\
\notag && \times\ld(\sum_{m=0}^{\infty}\sum_{j=0}^{\infty}\sum_{p=m}^{\infty}
\overline{\psi}_{\overline{\mu}}(p-m)\overline{g}(p-j)a_{-\mu}(j)e^{-i\lambda m}+\sum_{m=0}^{\infty}\sum_{j=1}^{n(\gamma)}\sum_{p=m}^{\infty}
\overline{\psi}_{\overline{\mu}}(p-m)\overline{g}(p+j)b_{-\mu}(j)e^{-i\lambda m}\rd)
\\
\notag &=&\frac{\chi_{\overline{\mu}}^{(d)}(e^{-i\lambda})}{\beta^{(d)}(i\lambda)}
\ld(\sum_{k=0}^{\infty}\psi^{\top}_{\overline{\mu}}(k)e^{-i\lambda k}\rd)
\sum_{m=0}^{\infty}( (\widetilde{\Psi}_{\overline{\mu}})^*\me G^{-}\me a_{-\mu} +(\widetilde{\Psi}_{\overline{\mu}})^* \me G^{+}\me b_{-\mu})_m e^{-i\lambda m}
\\
 &=&\frac{\chi_{\overline{\mu}}^{(d)}(e^{-i\lambda})}{\beta^{(d)}(i\lambda)}
\ld(\sum_{k=0}^{\infty}\psi^{\top}_{\overline{\mu}}(k)e^{-i\lambda k}\rd)
\sum_{m=0}^{\infty}(\overline{\psi}_{\overline{\mu}} \me C^{-}_{\overline{\mu},g} +\overline{\psi}_{\overline{\mu}} \me C^{+}_{\overline{\mu},g})_m e^{-i\lambda m}.
\end{eqnarray*}

The value of the mean square error $\Delta(f,g;\widehat{A}\xi)$ is calculated by the formula
\begin{eqnarray*}
  \notag \Delta\ld(f,g;\widehat{A}\xi\rd)&=&\Delta\ld(f,g;\widehat{A}\eta\rd)= \mt E\ld|A\eta-\widehat{A}\eta\rd|^2
 \\
 \notag &=& \frac{1}{2\pi}\int_{-\pi}^{\pi}(\vec A(e^{i\lambda}))^{\top}g(\lambda)\overline{\vec A(e^{i\lambda})}d\lambda
 +
 \frac{1}{2\pi}\int_{-\pi}^{\pi}(\vec h_{\overline{\mu}}(\lambda))^{\top}(f(\lambda)+|\beta^{(d)}(i\lambda)|^2g(\lambda))\overline{\vec h_{\overline{\mu}}(\lambda)}d\lambda
 \\
 \notag && -\frac{1}{2\pi}\int_{-\pi}^{\pi}(\vec h_{\overline{\mu}}(\lambda))^{\top}
 \beta^{(d)}(i\lambda)g(\lambda)\overline{A(e^{-i\lambda})}d\lambda
 -
 \frac{1}{2\pi}\int_{-\pi}^{\pi}(A(e^{-i\lambda}))^{\top}\overline{\beta^{(d)}(i\lambda)}
 g(\lambda)\overline{\vec h_{\overline{\mu}}(\lambda)}d\lambda
 \\
 &=&\|\widetilde{\Phi}\me a\|^2-\|\overline{\psi}_{\overline{\mu}}( \me C^{-}_{\overline{\mu},g} + \me C^{+}_{\overline{\mu},g})\|^2.
\quad \square \end{eqnarray*}


\begin{thebibliography}{x}




\bibitem{Baek}
\newblock C. Baek, R. A. Davis, and V. Pipiras,
\newblock \emph{Periodic dynamic factor models: estimation approaches and applications},
\newblock Electronic Journal of Statistics, vol. 12, no. 2, pp. 4377--4411, 2018.

\bibitem{Baillie}
\newblock R. T. Baillie,  C. Kongcharoen, and G. Kapetanios,
\newblock \emph{Prediction from ARFIMA models:  Comparisons between MLE and semiparametric estimation procedures},
\newblock International Journal of Forecasting, vol. 28, pp. 46--53, 2012.

\bibitem{Basawa}
\newblock I.V. Basawa, R. Lund, and Q. Shao,
\newblock \emph{First-order seasonal autoregressive processes with periodically varying parameters},
\newblock Statistics and Probability Letters, vol. 67, no. 4, p. 299--306, 2004.

\bibitem{Box_Jenkins}
\newblock G. E. P. Box, G. M. Jenkins, G. C. Reinsel, and G.M. Ljung,
\newblock \emph{Time series analysis. Forecasting and control. 5rd ed.},
\newblock John Wiley \& Sons, Hoboken, NJ,  2016.



\bibitem{Dubovetska_filt}
\newblock I. I. Dubovets'ka, and M. P. Moklyachuk,
\newblock \emph{Filtration of linear functionals of periodically correlated sequences},
\newblock Theory of Probability and Mathematical Statistics, vol. 86, pp. 51--64, 2013.


\bibitem{Dudek}
\newblock G. Dudek,
\newblock \emph{Forecasting time series with multiple seasonal cycles using neural networks with local learning},
\newblock In: Rutkowski L., Korytkowski M., Scherer R., Tadeusiewicz R., Zadeh L.A., Zurada J.M. (eds)
Artificial Intelligence and Soft Computing. ICAISC 2013. Lecture Notes in Computer Science, vol. 7894. Springer, Berlin, Heidelberg, pp. 52--63, 2013.


\bibitem{Franke1985}
\newblock J. Franke,
\newblock \emph{Minimax-robust prediction of discrete time series},
\newblock  Z. Wahrscheinlichkeitstheor. Verw. Gebiete, vol. 68, no. 3, pp. 337--364, 1985.


\bibitem{Gihman_Skorohod}
\newblock I. I. Gikhman, and A. V. Skorokhod,
\newblock \emph{The theory of stochastic processes. I.},
\newblock  Springer, Berlin,   2004.


\bibitem{Gladyshev}
 \newblock E. G. Gladyshev,
\newblock \emph{ Periodically correlated random sequences},
\newblock Sov. Math. Dokl.  vol, 2, pp. 385--388, 1961.

\bibitem{Gould}
\newblock P. G. Gould,   A. B. Koehler, J. K. Ord, R. D. Snyder, R. J. Hyndman, and F. Vahid-Araghi,
\newblock \emph{Forecasting time-series with multiple seasonal patterns},
\newblock European Journal of Operational Research, vol. 191, pp. 207--222, 2008.


\bibitem{Grenander}
\newblock U. Grenander,
\newblock \emph{A prediction problem in game theory},
\newblock Arkiv f\"or Matematik, vol. 3, pp. 371--379, 1957.


\bibitem{Hannan}
\newblock E. J. Hannan,
\newblock \emph{Multiple time series. 2nd rev. ed.},
\newblock John Wiley \& Sons, New York,   2009.


\bibitem{Hassler}
  \newblock U. Hassler, and M.O. Pohle,
 \newblock \emph{Forecasting under long memory and nonstationarity},
  \newblock arXiv:1910.08202, 2019.

\bibitem{Hosoya}
 \newblock Y. Hosoya,
\newblock \emph{Robust linear extrapolations of second-order stationary processes},
 \newblock Annals of Probability, vol. 6, no. 4, pp. 574--584, 1978.


\bibitem{Hurd}
 \newblock H. Hurd, and V. Pipiras,
\newblock \emph{Modeling periodic autoregressive time series with multiple periodic effects},
 \newblock In: Chaari F., Leskow J., Zimroz R., Wylomanska A., Dudek A. (eds) Cyclostationarity: Theory and Methods - IV. CSTA 2017.
 Applied Condition Monitoring, vol 16. Springer, Cham, pp. 1--18, 2020.

\bibitem{Karhunen}
\newblock K. Karhunen,
\newblock \emph{\"Uber lineare Methoden in der Wahrscheinlichkeitsrechnung},
\newblock Annales Academiae Scientiarum Fennicae. Ser. A I, no. 37, 1947.


\bibitem{Kassam_Poor}
 \newblock S. A. Kassam, and H. V. Poor,
\newblock \emph{Robust techniques for signal processing: A survey},
 \newblock Proceedings of the IEEE, vol. 73, no. 3, pp. 1433--481, 1985.


\bibitem{Kolmogorov}
\newblock A. N. Kolmogorov,
\newblock \emph{Selected works by A. N. Kolmogorov. Vol. II: Probability theory and mathematical statistics. Ed. by A.
N. Shiryayev. Mathematics and Its Applications. Soviet Series. 26. Dordrecht etc.}
\newblock Kluwer Academic Publishers, 1992.


  \bibitem{Liu}
\newblock Y. Liu, Yu. Xue, and M. Taniguchi,
\newblock \emph{Robust linear interpolation and extrapolation of stationary time series in Lp},
\newblock  Journal of Time Series Analysis,  vol. 41, no. 2, pp. 229--248, 2020.

\bibitem{Lund}
\newblock R. Lund,
\newblock \emph{Choosing seasonal autocovariance structures: PARMA or SARMA},
\newblock In: Bell WR, Holan SH, McElroy TS (eds) Economic time series: modelling and seasonality. Chapman and Hall, London, pp. 63--80, 2011.




\bibitem{Luz_Mokl_filt3}
\newblock M. Luz and M. Moklyachuk,
\newblock \emph{Filtering problem for functionals of stationary sequences},
\newblock Statistics, Optimization and Information Computing, vol. 4, no. 1, pp. 68 -- 83, 2016.

\bibitem{Luz_Mokl_book}
\newblock M. Luz, and M. Moklyachuk,
\newblock \emph{Estimation of stochastic processes with stationary increments and cointegrated sequences},
\newblock London: ISTE; Hoboken, NJ: John Wiley \& Sons, 282 p., 2019.

\bibitem{Luz_Mokl_extra_GMI}
\newblock M. Luz, and M. Moklyachuk,
\newblock \emph{Minimax-robust forecasting of sequences with periodically stationary long memory multiple seasonal increments},
\newblock Statistics, Optimization and Information Computing, vol. 8, no. 3, pp. 684--721, 2020.

\bibitem{Luz_Mokl_filt_GMI}
\newblock M. Luz, and M. Moklyachuk,
\newblock \emph{Robust filtering of sequences with periodically stationary multiplicative seasonal increments},
\newblock Statistics, Optimization and Information Computing, vol. 9, no. 4, pp. 1010-1030, 2021.

\bibitem{Luz_Mokl_extra_noise_PCI}
\newblock M. Luz, and M. Moklyachuk,
\newblock \emph{Minimax  prediction of sequences with periodically stationary increments observes with noise and cointegrated sequences},
\newblock In: M. Moklyachuk (eds) Stochastic Processes: Fundamentals and Emerging Applications. Nova Science Publishers,  New York, pp. 189--247, 2023.

\bibitem{Moklyachuk}
\newblock M. P. Moklyachuk,
\newblock \emph{Minimax filtration of linear transformations of stationary sequences},
\newblock  Ukrainian Mathematical Journal, vol. 43, pp. 75--81, 1991.

\bibitem{Moklyachuk2015}
\newblock M. P. Moklyachuk,
\newblock \emph{Minimax-robust estimation problems for stationary stochastic sequences},
\newblock Statistics, Optimization and Information Computing, vol. 3, no. 4, pp. 348--419, 2015.


\bibitem{Mokl_Mas_filt}
\newblock M.P. Moklyachuk, and A. Yu. Masyutka,
\newblock \emph{Robust filtering of stochastic processes}
\newblock Theory of Stochastic Processes, vol. 13, no. 1-2, pp. 166--181, 2007.


\bibitem{Sidei_book}
\newblock M. Moklyachuk, M. Sidei, and O. Masyutka,
\newblock \emph{Estimation of stochastic processes with missing observations},
\newblock Mathematics Research Developments. Nova Science Publishers, New York, NY: Nova Science Publishers, 336 p., 2019

\bibitem{Napolitano}
\newblock A. Napolitano,
\newblock \emph{Cyclostationarity: New trends and applications},
\newblock Signal Processing, vol. 120, pp. 385--408, 2016.

\bibitem{Osborn}
\newblock D. Osborn,
\newblock \emph{The implications of periodically varying coefficients for seasonal time-series processes},
\newblock Journal of Econometrics, vol. 48, no. 3, pp. 373--384, 1991.


\bibitem{Porter-Hudak}
\newblock S. Porter-Hudak,
\newblock \emph{An application of the seasonal fractionally differenced model to the monetary aggegrates},
\newblock Journal of the American Statistical Association, vol.85, no. 410, pp. 338--344, 1990.


\bibitem{Reisen2018}
\newblock V. A. Reisen,  E. Z. Monte,  G. C. Franco,  A. M. Sgrancio,  F. A. F. Molinares,  P. Bondond, F. A. Ziegelmann, and B. Abraham,
\newblock \emph{Robust estimation of fractional seasonal processes: Modeling and forecasting daily average SO2 concentrations},
\newblock Mathematics and Computers in Simulation, vol. 146, pp. 27--43, 2018.


\bibitem{Rockafellar}
\newblock R. T. Rockafellar,
\newblock \emph{Convex Analysis},
\newblock Princeton Landmarks in Mathematics. Princeton, NJ: Princeton University Press, 451 p., 1997.

\bibitem{Solci}
\newblock C. C. Solci, V. A. Reisen, A. J. Q. Sarnaglia, and  P. Bondon,
\newblock \emph{Empirical study of robust estimation methods for PAR models with application to the air quality area},
\newblock Communication in Statistics - Theory and Methods,  vol. 48, no. 1, pp. 152--168, 2020.

\bibitem{Tsai}
 \newblock H. Tsai, H. Rachinger, and E.M.H. Lin,
 \newblock \emph{Inference of seasonal long-memory time series with measurement error},
 \newblock Scandinavian Journal of Statistics, vol. 42, no. 1, pp. 137--154, 2015.

\bibitem{VastPoor1984}
\newblock S. K. Vastola, and H. V. Poor,
\newblock \emph{Robust Wiener-Kolmogorov theory},
\newblock IEEE Trans. Inform. Theory, vol. 30, no. 2, pp. 316--327, 1984.

\bibitem{Yaglom:1955}
\newblock A. M.  Yaglom,
\newblock  \emph{Correlation theory of stationary and related random processes with stationary $n$th increments}.
\newblock American Mathematical Society Translations: Series 2, vol. 8, pp. 87 --141, 1958.



\bibitem{Yaglom}
\newblock A. M.  Yaglom,
\newblock \emph{Correlation theory of stationary and related random functions. Vol. 1: Basic results;  Vol. 2: Supplementary notes and references},
\newblock Springer Series in Statistics, Springer-Verlag, New York etc., 1987.




\end{thebibliography}
\end{document}